\documentclass[pdflatex,sn-mathphys-num]{sn-jnl}

\usepackage{mathtools}
\usepackage{amssymb}
\usepackage{enumitem}
\usepackage{graphicx}
\usepackage{amsthm}
\usepackage[percent]{overpic}

\theoremstyle{thmstyleone}
\newtheorem{theorem}{Theorem}[section]
\newtheorem{lemma}[theorem]{Lemma}
\newtheorem{proposition}[theorem]{Proposition}
\newtheorem{corollary}[theorem]{Corollary}

\theoremstyle{thmstyletwo}
\newtheorem{remark}{Remark}

\theoremstyle{thmstylethree}
\newtheorem{definition}[theorem]{Definition}

\newcommand{\Lcal}{\mathcal L}

\newcommand{\dom}{\operatorname{dom}}
\newcommand{\ran}{\operatorname{ran}}
\newcommand{\Ree}{\operatorname{Re}}
\newcommand{\Cplus}{\mathbb C_+}

\newcommand{\setdef}[2]{\left\{\,#1\;\middle|\;#2\,\right\}}
\newcommand{\spvek}[2]{\left(\begin{smallmatrix}#1\\#2\end{smallmatrix}\right)}
\newcommand{\sbvek}[2]{\left[\begin{smallmatrix}#1\\#2\end{smallmatrix}\right]}
\newcommand{\pvek}[2]{\left(\begin{matrix}#1\\#2\end{matrix}\right)}
\newcommand{\bvek}[2]{\left[\begin{matrix}#1\\#2\end{matrix}\right]}

\title{A Newton--Kleinman Iteration for Infinite-Dimensional Linear-Quadratic Optimal Control}

\author*[1]{\fnm{Timo} \sur{Reis}}\email{timo.reis@tu-ilmenau.de}

\affil*[1]{\orgdiv{Institute of Mathematics}, \orgname{Technische Universit\"at Ilmenau}, \orgaddress{\city{Ilmenau}, \postcode{98693}, \country{Germany}}}

\begin{document}

\abstract{
We develop a Newton--Kleinman iteration for linear-quadratic optimal control
of infinite-dimensional systems with possibly unbounded control and
observation operators. In this setting, the classical algebraic Riccati
equation need not be meaningful and therefore cannot serve as the basis of
the iteration. We instead linearize a Lur'e identity formulated directly on
the system-node domain. Starting from any stabilizing feedback, the resulting
iteration is well defined, preserves stability, and yields a monotonically
decreasing sequence of cost operators. A resolvent-based normalization makes
the construction applicable to unbounded control. Under finite-cost
detectability and for a finite-dimensional input space, the iterates converge
strongly to the optimal value operator, independently of the fixed
normalization parameter. In finite dimensions, the natural zero-state
normalization recovers exactly the classical Newton--Kleinman iteration.

The formulation leads to a low-rank numerical realization based on shifted
system solves and inversions on the input space. We illustrate the method on
a linearized cylinder wake with boundary traction control and unbounded
observation, using one Newton--Kleinman update per adaptively refined spatial
mesh, and on a damped linear elasticity problem with boundary traction
control and boundary displacement observation.
}

\keywords{
Newton--Kleinman iteration, system node, unbounded control and observation,
linear-quadratic optimal control, low-rank ADI, adaptive discretization
}
\pacs[MSC Classification]{49N10, 47N70, 93B52, 93C20}

\maketitle


\section{Introduction}
\label{sec:intro}

The Newton--Kleinman iteration is a classical approach to the numerical
solution of algebraic Riccati equations \cite{Kleinman1968}.  Its particular
appeal for large-scale problems is that the nonlinear Riccati equation is
replaced by a sequence of Lyapunov equations, which allows the use of inexact
and low-rank solvers \cite{FeitzingerHyllaSachs2009,BennerEtAl2016}.
For PDE control problems, Newton--Kleinman and Newton--ADI (alternating-direction implicit) methods have been
used extensively after spatial discretization, where the resulting large-scale
matrix Riccati equations can be treated by low-rank techniques; see, for
instance, \cite{MorrisNavasca2010,BennerSaak2010,BennerEtAl2016}. Infinite-dimensional extensions have also been developed directly for
operator Riccati equations
\cite{BurnsSachsZietsman2008,Iftime2017,CurtainZwartIftime2017}.
Their convergence theory retains bounded control, and mostly also bounded
observation, operators.  Roughly speaking, bounded control means that the
input acts directly in the state space, whereas bounded observation means
that the output can be evaluated for every state.  This excludes many
boundary control and observation problems.  Unbounded observation for
regular linear systems is treated in
\cite{MassoudiReis2016,MassoudiThesis}, but the complete convergence theory
there still requires bounded control.

The present paper allows both control and observation to be unbounded.
In this setting the algebraic Riccati equation
\eqref{eq:intro-fd-are} can no longer be taken as the starting point:
the operator products occurring in it need not be well defined, and even
when an algebraic Riccati equation can be formulated, the value operator need
not be characterized by it; see \cite{Opmeer2014}.  The
problem is therefore not merely to prove Newton--Kleinman convergence under
weaker assumptions.  One first has to replace the algebraic Riccati equation
by an identity for the value operator that remains meaningful for such
systems, and then formulate the Newton step on the basis of this identity.

The finite-dimensional case provides the benchmark for the construction
below. 
For $A\in\mathbb C^{n\times n}$, $B\in\mathbb C^{n\times m}$,
$C\in\mathbb C^{p\times n}$, and $D\in\mathbb C^{p\times m}$, consider
\begin{equation}
\dot x=Ax+Bu,
\qquad
y=Cx+Du,
\label{eq:intro-fd-system}
\end{equation}
together with the quadratic cost
\begin{equation}
J(u,y)
=
\int_0^\infty\|y(t)\|^2+\|u(t)\|^2\,dt.
\label{eq:intro-fd-cost}
\end{equation}
Under the standard stabilizability and detectability assumptions, the optimal
cost is quadratic:
\[
V(x_0):=\inf J(u,y)=x_0^*P_\star x_0,
\]
where the infimum is taken over all admissible trajectories starting from
$x_0$, and $P_\star\in\mathbb C^{n\times n}$ is Hermitian and positive
semidefinite.  This matrix is the stabilizing solution of the algebraic
Riccati equation
\begin{equation}
A^*P_\star +P_\star A+C^*C
-
(P_\star B+C^*D)(I+D^*D)^{-1}(B^*P_\star +D^*C)
=0.
\label{eq:intro-fd-are}
\end{equation}
Here stabilizing means that the feedback
\begin{equation}\label{eq:intro-fd-feedback}
K
=
-(I+D^*D)^{-1}(B^*P_\star +D^*C)
\end{equation}
makes $A+BK$ stable.  Thus the stabilizing Riccati solution determines both
the optimal cost and the optimal feedback.

Newton--Kleinman computes the stabilizing solution of
\eqref{eq:intro-fd-are} by Newton's method, with each feedback evaluation
given by a Lyapunov equation. A suitable initialization is provided by any
stabilizing feedback $K_0$, rather than by an initial approximation of $P$.
For a current stabilizing feedback $K_k$, set
\[
A_k=A+BK_k,
\qquad
Z_k=\sbvek{C+DK_k}{K_k}.
\]
The corresponding feedback cost operator is determined by
\begin{equation}
A_k^*P_{k+1}+P_{k+1}A_k+Z_k^*Z_k=0,
\qquad
K_{k+1}=-(I+D^*D)^{-1}(B^*P_{k+1}+D^*C).
\label{eq:intro-fd-lyap}
\end{equation}
For $k\geq1$, the same feedback evaluation can be written incrementally.
Setting
\[
S_k=P_{k+1}-P_k,
\qquad
\Theta_k=(I+D^*D)^{1/2}(K_k-K_{k-1}),
\]
the Newton correction satisfies
\begin{equation}
A_k^*S_k+S_kA_k-\Theta_k^*\Theta_k=0.
\label{eq:intro-fd-increment}
\end{equation}
The update is then completed by
\[
P_{k+1}=P_k+S_k,
\qquad
K_{k+1}=-(I+D^*D)^{-1}(B^*P_{k+1}+D^*C).
\]
Thus, the classical Newton--Kleinman iteration admits two equivalent
Lyapunov realizations. One may compute the current feedback cost operator
$P_{k+1}$ directly from \eqref{eq:intro-fd-lyap}, or, after the first
step, compute the Newton correction from \eqref{eq:intro-fd-increment}
and recover $P_{k+1}$ from $P_k$. The incremental realization is
particularly attractive when the feedback correction has substantially
lower rank than the full performance output
\cite{FeitzingerHyllaSachs2009,BennerEtAl2016}.

Our aim is to retain the structure of the classical Newton--Kleinman
iteration directly at the infinite-dimensional level, before spatial
discretization. The obstruction described above prevents a direct
matrix-to-operator extension. We instead use the Lur'e formulation of
\cite{Opmeer_Staffans,Opmeer2014}, which is defined on the system-node
domain and remains meaningful for simultaneously unbounded control and
observation. Feedback evaluation continues to be an observability-Gramian
computation, while the Newton update is obtained by linearizing the
corresponding Lur'e identity.

The essential modification concerns the normalization of the linearized
Lur'e factor. In finite dimensions, the usual zero-state normalization
recovers the classical Newton--Kleinman update exactly. For genuinely
unbounded control, however, changing the input while keeping the state fixed
need not define an element of the system-node domain. We therefore introduce
a resolvent-based normalization with a fixed parameter $\alpha>0$.
Different choices of this parameter may produce different intermediate
feedbacks, but, under the assumptions of our convergence result, they lead
to the same limiting value operator. The precise construction and its
finite-dimensional interpretation are given in Section~\ref{sec:nk}.

Flow inversion shows that every resulting Newton update is infinite-time
well posed and preserves strong stability.  Moreover, the cost operators
satisfy the exact decrement identity
\[
P_k-P_{k+1}
=
\mathcal C_{\Theta,k,\infty}^*
\mathcal C_{\Theta,k,\infty}
\geq0.
\]
Hence every stabilizing initialization generates a monotonically decreasing
sequence, and there is a bounded nonnegative operator $P_\infty$ such that
$P_kx\to P_\infty x$ for all $x\in X$.  Under finite-cost detectability
and for a finite-dimensional input space, $P_kx\to P_\star x$ for all
$x\in X$, independently of the fixed normalization parameter. If the zero-state normalization is available, it may be used instead; in
finite dimensions this recovers exactly the classical Newton--Kleinman
iteration.

The formulation also leads directly to a resolvent-based numerical
realization. Spatial discretization enters through the shifted state and
adjoint problems required for the Gramian and feedback computations, which
permits the use of low-rank operator-ADI techniques
\cite{OpmeerReisWollner2013}. Throughout, we use the system-node framework
of \cite{Staffans2005}, which accommodates simultaneously unbounded control
and observation operators and has been successfully applied to a range of
PDE systems with distributed and boundary interaction; see, for instance,
\cite{PhilippReisSchaller2025,ReisSchallerOseen,SkrepekReis2026,
ReisSchaller2025}.

Two PDE examples illustrate complementary aspects of the approach. For a
linearized cylinder wake, both the boundary traction input and the
observation are unbounded at the continuous level. We combine the
Newton--Kleinman iteration with adaptive Taylor--Hood discretization,
performing exactly one Newton--Kleinman update on each spatial level. A
second example considers damped linear elasticity on an L-shaped domain
with boundary traction control and boundary displacement observation and
illustrates the low-rank realization of the iteration in a different PDE
setting.

The paper is organized as follows. Section~\ref{sec:prelim} recalls the
required system-node framework. Section~\ref{sec:lq} introduces the value-operator and Lur'e formulation. Section~\ref{sec:nk} develops the
infinite-dimensional Newton--Kleinman iteration,
Section~\ref{sec:nk-wellposed} establishes feasibility and stability of the
Newton step, and Section~\ref{sec:nk-convergence} proves convergence.
Sections~\ref{sec:cylinder-wake} and~\ref{sec:numerics} present the two
numerical examples concerning a linearized cylinder wake and
boundary-controlled elasticity, respectively.

\section{Notation and preliminaries}
\label{sec:prelim}

\subsection{Notation}

All Hilbert spaces are complex and their inner products are linear in the
second argument.  We write $\mathcal L(X,Y)$ for the bounded operators from
$X$ to $Y$ and $\mathcal L(X):=\mathcal L(X,X)$.  The adjoint of an
operator $T$ is denoted by $T^*$, and $\rho(A)$ denotes the resolvent set of
a closed operator $A$.  We write
$\mathbb C_\pm
=
\{\lambda\in\mathbb C:\pm\operatorname{Re}\lambda>0\}$.
For self-adjoint operators $P,Q\in\mathcal L(X)$, the notation
$P\geq Q$ means that $P-Q$ is nonnegative. Norm subscripts are omitted whenever the underlying space is clear.

Whenever the domain of a closed operator is regarded as a normed space, it
is equipped with its graph norm.  If $A$ generates a $C_0$-semigroup on
$X$, then $X_{-1}$ and $A_{-1}$ denote the associated extrapolation space
and extrapolated generator, respectively.

We use $H^\infty(\mathbb C_+;\mathcal L(U))$ in the standard Hardy-space
sense, with
\[
\|Q\|_{H^\infty}
=
\sup_{\lambda\in\mathbb C_+}\|Q(\lambda)\|.
\]
By the $L^2$ multiplier theorem
\cite[Thm.~10.3.5]{Staffans2005}, every
$Q\in H^\infty(\mathbb C_+;\mathcal L(U))$ is the symbol of a unique bounded
causal time-invariant operator
\[
\mathfrak Q:L^2(0,\infty;U)\to L^2(0,\infty;U),
\qquad
\|\mathfrak Q\|=\|Q\|_{H^\infty}.
\]
For $T>0$, its truncation to $[0,T]$, obtained by zero extension followed
by restriction to $[0,T]$, is denoted by $\mathfrak Q_T$.  In particular,
$\|\mathfrak Q_T\|
\leq
\|\mathfrak Q\|
=
\|Q\|_{H^\infty}$.
\subsection{System nodes and well-posed systems}
\label{subsec:prelim-system-nodes}

We briefly recall the system-node framework used throughout the paper; see
\cite[Def.~4.7.2 and Secs.~4.6--4.7]{Staffans2005}.  Let $X$, $U$, and $Y$
be complex Hilbert spaces, representing the state, input, and output spaces,
respectively.

\begin{definition}[System node]
\label{def:prelim-system-node}
A system node on $(X,U,Y)$ is an operator
\[
\mathcal S
=
\begin{bmatrix}
A\&B\\
C\&D
\end{bmatrix}
:
\dom(A\&B)\subset X\times U
\rightarrow
X\times Y
\]
with the following properties:
\begin{enumerate}[label=(\alph*)]
\item $A\&B$ is closed;
\item $C\&D$ is bounded on $\dom(A\&B)$ equipped with the graph norm of
      $A\&B$;
\item for all $u\in U$, there exists $x\in X$ such that
      $\spvek{x}{u}\in\dom(A\&B)$;
\item the main operator
\[
Ax=A\&B\spvek{x}{0},
\qquad
\dom(A)
=
\setdef{x\in X}{\spvek{x}{0}\in\dom(A\&B)},
\]
generates a strongly continuous semigroup
$\mathcal T=(\mathcal T(t))_{t\geq0}$ on $X$.
\end{enumerate}
\end{definition}

The notation $C\&D$ denotes a joint output operator and does not imply that
separate operators $C$ and $D$ are available.  In finite dimensions,
Definition~\ref{def:prelim-system-node} reduces to the familiar state-space
representation.

The input--state--output system associated with $\mathcal S$ is formally
written as
\begin{equation}
\spvek{\dot x(t)}{y(t)}
=
\mathcal S\spvek{x(t)}{u(t)}.
\label{eq:prelim-system-node-equation}
\end{equation}
A \emph{classical trajectory} of
\eqref{eq:prelim-system-node-equation} on $[0,T]$ is a triple
\[
(x,u,y)
\in
C^1([0,T];X)
\times C([0,T];U)
\times C([0,T];Y)
\]
such that
$\spvek{x(t)}{u(t)}\in\dom(A\&B)$
and \eqref{eq:prelim-system-node-equation} holds for all $t\in[0,T]$. A \emph{generalized trajectory} on $[0,T]$ is a triple
\[
(x,u,y)
\in
C([0,T];X)
\times L^2(0,T;U)
\times L^2(0,T;Y)
\]
which is the limit, in this product space, of a sequence of classical
trajectories; see \cite[Sec.~4.7]{Staffans2005}.

For later use, we recall the extrapolation-space representation of the state
equation.  Fix $\beta\in\rho(A)$.  The extrapolation space $X_{-1}$ is the
completion of $X$ with respect to the norm
\[
\|x\|_{-1}
=
\|(\beta-A)^{-1}x\|_X,
\]
and $A$ extends continuously to an operator
$A_{-1}:X\to X_{-1}$; see, for instance,
\cite[Sec.~2.10]{TucsnakWeiss2009}.  Different choices of $\beta$ give
equivalent norms. There exists a unique
operator
$B\in\mathcal L(U,X_{-1})$
such that
\begin{equation}
\dom(A\&B)
=
\setdef{\spvek{x}{u}\in X\times U}
{A_{-1}x+Bu\in X},
\;\;
A\&B\spvek{x}{u}
=
A_{-1}x+Bu;
\label{eq:prelim-node-domain}
\end{equation}
see \cite[Def.~4.7.2 and Lem.~4.7.3]{Staffans2005}.  In particular, the
state equation may be interpreted in the extrapolation space as
$\dot x(t)=A_{-1}x(t)+Bu(t)$.
This representation is especially useful for boundary control, where the
control action need not take values in $X$.

By \cite[Lem.~4.7.3]{Staffans2005}, for all $\lambda\in\rho(A)$ the map
\[
u
\mapsto
\spvek{(\lambda-A_{-1})^{-1}Bu}{u}
\]
is bounded from $U$ into $\dom(A\&B)$ equipped with the graph norm.
Consequently, the transfer function of $\mathcal S$ is defined by
\begin{equation}
\mathbf G(\lambda)u
=
C\&D
\pvek{(\lambda-A_{-1})^{-1}Bu}{u},
\qquad
\lambda\in\rho(A).
\label{eq:prelim-transfer}
\end{equation}
In finite dimensions this is simply
$\mathbf G(\lambda)=C(\lambda-A)^{-1}B+D$, whereas
\eqref{eq:prelim-transfer} remains meaningful when $C$ and $D$ do not exist
as separate operators.

A system node is \emph{well posed} if, for all $T>0$, each initial state
$x_0\in X$ and input $u\in L^2(0,T;U)$ determine a unique generalized
trajectory and the resulting input--state--output map is bounded.  We denote
the corresponding finite-time maps by
\begin{equation}
\pvek{x(T)}{y}
=
\begin{bmatrix}
\mathcal T(T)&\mathcal B_T\\
\mathcal C_T&\mathcal D_T
\end{bmatrix}
\pvek{x_0}{u}.
\label{eq:prelim-finite-maps}
\end{equation}
Thus
\[
\mathcal B_T:L^2(0,T;U)\to X,
\quad
\mathcal C_T:X\to L^2(0,T;Y),
\quad
\mathcal D_T:L^2(0,T;U)\to L^2(0,T;Y).
\]
The system node is \emph{infinite-time well posed} if the four families in
\eqref{eq:prelim-finite-maps} are bounded uniformly for $T>0$.

We shall also use the corresponding admissibility terminology for individual
control and observation operators; see
\cite[Secs.~4.2--4.4]{TucsnakWeiss2009}.  Thus
$B_0\in\mathcal L(U,X_{-1})$ is called \emph{admissible for $\mathcal T$}
if its associated finite-time input maps
$\mathcal B_{0,T}:L^2(0,T;U)\to X$
are bounded for all $T>0$, and \emph{infinite-time admissible for
$\mathcal T$} if their norms are bounded uniformly in $T$.  Likewise,
$C_0\in\mathcal L(\dom(A),Y)$ is called \emph{admissible for $\mathcal T$}
if its associated finite-time observation maps
$\mathcal C_{0,T}:X\to L^2(0,T;Y)$
are bounded for all $T>0$, and \emph{infinite-time admissible for
$\mathcal T$} if their norms are bounded uniformly in $T$.  In the latter
case the maps $\mathcal C_{0,T}$ determine a bounded infinite-time
observation map
$\mathcal C_{0,\infty}:X\to L^2(0,\infty;Y)$,
and $\mathcal C_{0,\infty}^*\mathcal C_{0,\infty}$ is its infinite-time
observability Gramian.

For a well-posed system node, the control operator $B$ and the zero-input
observation operator
\[
Cx=C\&D\spvek{x}{0},
\qquad x\in\dom(A),
\]
are admissible for $\mathcal T$, with finite-time maps $\mathcal B_T$ and
$\mathcal C_T$ from \eqref{eq:prelim-finite-maps}.  If the system node is
infinite-time well posed, they are infinite-time admissible.

\subsection{Feedback and flow inversion}
\label{subsec:prelim-feedback}

We recall two system-theoretic constructions that will be used later:
admissible feedback and flow inversion. Admissible feedback describes the
closed-loop interconnection obtained by feeding an auxiliary output back
through the input. Flow inversion reverses such an interconnection by
exchanging the corresponding input and output variables. Here and below,
we refer to these variables as ports.

Let
$\mathcal S=\sbvek{A\&B}{C\&D}$
be a system node on $(X,U,Y)$ and let
$F\&G\in\mathcal L(\dom(A\&B),U)$. Consider the feedback relation
\[
u=F\&G\spvek{x}{u}+v.
\]
If this relation defines an admissible feedback transformation in the sense
of \cite[Sec.~7.3]{Staffans2005}, it yields a system node with external
input $v$; we retain the physical input $u$ as an additional output.
We call the feedback \emph{stabilizing} if this system node is infinite-time
well posed and its zero-input semigroup is strongly stable. The
Newton--Kleinman iteration is initialized by such a stabilizing feedback.
In finite dimensions this is the usual requirement of a stabilizing initial
feedback matrix.

\begin{definition}[Finite-cost detectability]
\label{def:prelim-finite-cost-detectability}
The system node is called \emph{finite-cost detectable} if every
infinite-time generalized trajectory satisfying
$u\in L^2(0,\infty;U)$ and
$y\in L^2(0,\infty;Y)$
also satisfies $x(t)\to0$ as $t\to\infty$.
\end{definition}
Strong detectability in the standard well-posed-system sense, formulated in
terms of a strongly stabilizing output injection, implies finite-cost
detectability; see
\cite[Defs.~7.3.2, 8.1.1(ii), 8.2.4(iii),(v) and Thm.~8.1.6(i)]{Staffans2005}. In finite dimensions, finite-cost detectability is equivalent to the
classical detectability of the pair $(C,A)$.

Flow inversion is the mechanism used below to solve the newly constructed
port relation for the old input while leaving the state equation unchanged.
It also provides a criterion ensuring that the resulting interconnection is
again a system node.  Consider a system node
$\mathcal N
=
\sbvek{A\&B}{N\&M}$
whose input and output spaces are both $U$.  Flow inversion interchanges these two port variables while leaving the state
unchanged.  We use the full-flow case of Staffans'
partial-flow-inversion criterion \cite[Thm.~6.6.11]{Staffans2005}. For $\lambda\in\mathbb C$ define
\[
\mathcal R_\lambda\spvek{x}{u}
=
\pvek{
\lambda x-A\&B\spvek{x}{u}
}{
-N\&M\spvek{x}{u}
}.
\]
The criterion states that the port is flow invertible if and only if there
exists $\lambda\in\mathbb C$ such that $\mathcal R_\lambda$ is bijective and
the upper-left block $M_{11}$ of $\mathcal R_\lambda^{-1}$ is injective with
dense range and $-M_{11}^{-1}$ generates a strongly continuous semigroup. In this case,
interchanging the port variables defines a system node with input
$v=N\&M\spvek{x}{u}$
and output $u$, while the state equation
$\dot x=A\&B\spvek{x}{u}$
remains unchanged.  This system node is called the flow inverse of
$\mathcal N$.  Classical and generalized trajectories of the original node
and its flow inverse are obtained from one another by interchanging the
corresponding input and output signals; see
\cite[Thm.~6.6.15]{Staffans2005}.

\section{The value operator and the Lur'e equation}
\label{sec:lq}

We use the value-operator and Riccati formulation of infinite-horizon
linear-quadratic optimal control developed in
\cite{Staffans1997,Staffans1998,Opmeer_Staffans}; see also
\cite{Opmeer2014}. For the present purpose, its essential advantage is that
the optimal value and the corresponding Riccati equation remain meaningful
even when the classical algebraic Riccati equation is not well defined.

Let
$\mathcal S
=
\sbvek{A\&B}{C\&D}$
be a system node on $(X,U,Y)$.  We consider the quadratic cost
\begin{equation}
J(u,y)
=
\|u\|_{L^2(0,\infty;U)}^2
+
\|y\|_{L^2(0,\infty;Y)}^2.
\label{eq:lq-cost}
\end{equation}
For $x_0\in X$, let $V(x_0)$ denote the optimal cost in the sense of
\cite[Sec.~3]{Opmeer_Staffans}.  If the system node is well posed, this is
precisely the infimum of \eqref{eq:lq-cost} over all infinite-time
generalized trajectories with initial state $x_0\in X$ and
$u\in L^2(0,\infty;U)$,
$y\in L^2(0,\infty;Y)$.

The infinite-time well-posedness of the system node associated with a stabilizing feedback implies the
state finite future cost condition of
\cite[Def.~5.7]{Opmeer_Staffans}.  Indeed, setting its external input to
zero produces, for all $x_0\in X$, a trajectory whose physical input and
output belong to the corresponding $L^2$ spaces.  Hence every initial state
has finite future cost.

Let
$\mathfrak C_{\rm o}:X\to
L^2(0,\infty;U)\times L^2(0,\infty;Y)$
denote the map that assigns to each initial state its optimal future
input--output pair; see \cite[Sec.~3]{Opmeer_Staffans}.  By
\cite[Lem.~3.6]{Opmeer_Staffans}, $\mathfrak C_{\rm o}$ is closed, and by
\cite[Thm.~3.7]{Opmeer_Staffans},
$V(x_0)=\|\mathfrak C_{\rm o}x_0\|^2$ for all $x_0\in X$.
The state finite future cost condition gives
$\dom(\mathfrak C_{\rm o})=X$.  Hence the closed graph theorem yields
$\mathfrak C_{\rm o}\in\mathcal L
(X,L^2(0,\infty;U)\times L^2(0,\infty;Y))$.  Consequently,
\[
V(x_0)
=
\langle P_\star x_0,x_0\rangle_X,
\qquad
P_\star=\mathfrak C_{\rm o}^*\mathfrak C_{\rm o}\in\mathcal L(X),
\qquad
P_\star=P_\star^*\geq0.
\]
By \cite[Thm.~5.9]{Opmeer_Staffans}, the optimal future cost is the minimal
nonnegative solution of the continuous-time control Riccati equation of
\cite[Def.~5.1]{Opmeer_Staffans}.  For the present quadratic cost, the
corresponding Lur'e form is defined on the full system-node domain and yields
an operator
$L_\star\&M_\star\in\mathcal L(\dom(A\&B),U)$
such that
\begin{multline}
2\Ree
\left\langle
P_\star x,
A\&B\spvek{x}{u}
\right\rangle_X
+
\left\|
C\&D\spvek{x}{u}
\right\|_Y^2
+
\|u\|_U^2
=
\left\|
L_\star\&M_\star\spvek{x}{u}
\right\|_U^2\\
\text{for all $\spvek{x}{u}\in\dom(A\&B)$}.
\label{eq:lq-lure}
\end{multline}
Equation~\eqref{eq:lq-lure} is the Lur'e form of the Riccati equation. Its
importance here is that every term is well defined on the system-node domain:
no products such as $P_\star B$, $B^*P_\star$, or $D^*C$ are required.
The same Lur'e identity may equivalently be obtained from the bounded-storage
dissipation inequality; see \cite[Thm.~4.1]{Reis2026_Dissip}.

For comparison, consider the finite-dimensional optimal control problem
\eqref{eq:intro-fd-system}--\eqref{eq:intro-fd-cost}. Then $P_\star$ is the
stabilizing solution of \eqref{eq:intro-fd-are}, with feedback $K$ given by
\eqref{eq:intro-fd-feedback}, and the algebraic Riccati equation is
equivalently written as
\begin{multline*}
2\Ree
\langle P_\star x,Ax+Bu\rangle
+
\|Cx+Du\|^2
+
\|u\|^2
=
\|(I+D^*D)^{1/2}(u-Kx)\|^2\\
\text{for all $x\in\mathbb C^n$ and $u\in\mathbb C^m$}.
\end{multline*}
Thus \eqref{eq:lq-lure} reduces exactly to the familiar matrix Riccati
equation, with
\[
L_\star\&M_\star\spvek{x}{u}
=
(I+D^*D)^{1/2}(u-Kx).
\]
In particular, setting the factor equal to zero gives the optimal feedback
relation $u=Kx$.

Thus the Lur'e identity remains meaningful for unbounded control and
observation and reduces exactly to the classical Riccati equation in finite
dimensions.  We use it as the basis of the Newton step in the next section.

\section{The infinite-dimensional Newton--Kleinman iteration}
\label{sec:nk}

We now formulate the Newton--Kleinman iteration at the system-node level.
As in the matrix case, one step consists of evaluating the cost associated
with the current feedback and then performing a Newton update. The feedback
cost is represented by an infinite-time observability Gramian, while the
Newton update is obtained by linearizing the Lur'e identity from
Section~\ref{sec:lq} in the coordinates of the system node associated with
the current feedback. The analysis in Section~\ref{sec:nk-wellposed} shows that the resulting
port is flow invertible and defines the next infinite-time well-posed and
strongly stable feedback.

Let $\Sigma_k$ denote the system node associated with the $k$-th
stabilizing feedback, written as
\[
\Sigma_k
=
\begin{bmatrix}
A_k\&B_k\\
Z_k\&W_k
\end{bmatrix},
\]
with auxiliary input $v_k\in U$. Its performance output is
$z_k=(y,u)\in Y\times U$, so that $\|z_k\|^2=\|y\|^2+\|u\|^2$.
Setting $v_k=0$ closes the current feedback; the physical control input
$u$ is generally nonzero.
We denote its zero-input semigroup by $\mathcal T_k$ and its infinite-time
observation map by
$\mathcal C_{k,\infty}:X\to L^2(0,\infty;Y\times U)$.
The iteration is initialized by the system node $\Sigma_0$ associated with
an arbitrary stabilizing feedback; no initial approximation of the optimal
value operator is required. Thus the initialization includes the auxiliary-input coordinate, not only
the corresponding feedback relation obtained by setting the auxiliary input to zero.

\subsection{Feedback evaluation and Newton update}
\label{subsec:nk-feedback-update}

For the fixed $k$-th feedback, define
\begin{equation}
P_{k+1}
=
\mathcal C_{k,\infty}^*\mathcal C_{k,\infty}.
\label{eq:nk-feedback-value}
\end{equation}
Thus $\langle P_{k+1}x_0,x_0\rangle$ is precisely the infinite-horizon cost
of the zero-input trajectory of $\Sigma_k$ starting from $x_0$.
Since $Z_k$ is infinite-time admissible, $P_{k+1}$ satisfies
\begin{equation}
2\Ree\langle P_{k+1}x,A_kx\rangle+\|Z_kx\|^2=0,
\qquad x\in\dom(A_k).
\label{eq:nk-feedback-lyap}
\end{equation}
Strong stability of $\mathcal T_k$ implies uniqueness among bounded
self-adjoint solutions. Hence feedback evaluation may equivalently be viewed
as an operator Lyapunov solve.

For $P=P^*\in\mathcal L(X)$ define, on $\dom(A_k\&B_k)$,
\begin{equation}
\mathfrak l_{k,P}\!\left[\spvek{x}{v}\right]
:=
2\Ree\left\langle
Px,A_k\&B_k\spvek{x}{v}
\right\rangle
+
\left\|Z_k\&W_k\spvek{x}{v}\right\|^2,
\qquad
\spvek{x}{v}\in\dom(A_k\&B_k).
\label{eq:nk-lure-form}
\end{equation}
Along a classical trajectory this is
\[
\tfrac{\mathrm d}{\mathrm dt}\langle Px(t),x(t)\rangle+\|z_k(t)\|^2.
\]
In particular, \eqref{eq:nk-feedback-lyap} gives
\begin{equation}
\mathfrak l_{k,P_{k+1}}\!\left[\spvek{x}{0}\right]=0,
\qquad x\in\dom(A_k).
\label{eq:nk-lure-zero-slice}
\end{equation}

In the coordinates of $\Sigma_k$, the factor at the linearization point is
represented by the auxiliary-input coordinate itself: its value at
$\spvek{x}{v}\in\dom(A_k\&B_k)$ is $v$. Let $S=S^*\in\Lcal(X)$ be an increment of the cost operator and let
$\delta\Gamma\&\delta\Lambda\in
\Lcal(\dom(A_k\&B_k),U)$ be an increment of the factor. Since
\eqref{eq:nk-lure-form} is affine in $P$, linearizing the residual
\[
\mathfrak l_{k,P_k+S}\!\left[\pvek{x}{v}\right]
-
\left\|v+\delta\Gamma\&\delta\Lambda\pvek{x}{v}\right\|^2
\]
amounts to dropping only the quadratic term
$\|\delta\Gamma\&\delta\Lambda\pvek{x}{v}\|^2$.
We write the resulting updated factor as
\[
\Gamma_{k+1}\&\Lambda_{k+1}\pvek{x}{v}
=
v+\delta\Gamma\&\delta\Lambda\pvek{x}{v}.
\]
Setting $S=P_{k+1}-P_k$ then gives the Newton equation
\begin{equation}
\mathfrak l_{k,P_{k+1}}\!\left[\spvek{x}{v}\right]
=
2\Ree\left\langle
\Gamma_{k+1}\&\Lambda_{k+1}\spvek{x}{v},v
\right\rangle
-\|v\|^2,
\qquad
\spvek{x}{v}\in\dom(A_k\&B_k).
\label{eq:nk-linearized-lure}
\end{equation}
For $k=0$, no initial cost-operator approximation is needed, since only
the sum $P_0+(P_1-P_0)=P_1$ enters the linearization.

Setting $v=0$ in \eqref{eq:nk-linearized-lure} recovers
\eqref{eq:nk-lure-zero-slice}; hence the feedback evaluation provides exactly the
compatibility condition required by the Newton equation.
Section~\ref{sec:nk-wellposed} proves that a factor
$\Gamma_{k+1}\&\Lambda_{k+1}$ exists and fixes its remaining
skew-adjoint input-side freedom by a resolvent lift. Defining
$v_{k+1}=\Gamma_{k+1}\&\Lambda_{k+1}\spvek{x}{v_k}$ and using
$2\Ree\langle a,b\rangle-\|b\|^2=\|a\|^2-\|a-b\|^2$ yields
\begin{equation}
\mathfrak l_{k,P_{k+1}}\!\left[\spvek{x}{v_k}\right]
=
\|v_{k+1}\|^2-\|v_k-v_{k+1}\|^2.
\label{eq:nk-lure-residual-factorization}
\end{equation}
The difference $\vartheta_{k+1}:=v_k-v_{k+1}$ is the Newton correction
output. We define the next feedback relation by setting $v_{k+1}=0$.
Theorem~\ref{thm:nk-flow-invertible} shows that this relation defines a
new stabilizing system node.

The linearized identity does not determine the Newton factor uniquely: one
may add a skew-adjoint operator acting on the auxiliary input.  We fix
this freedom by choosing once and for all a real parameter
$\alpha>0$ with $\alpha\in\rho(A)$, where $A$ is the main
operator of the original system. Such a choice is always possible for a
$C_0$-semigroup generator. Since $\Sigma_k$ is associated with a stabilizing
feedback, its semigroup is bounded and strongly stable, and hence
$\mathbb C_+\subset\rho(A_k)$. At every Newton step we use the resolvent lift
\[
\iota_kv
=
\spvek{(\alpha-A_{k,-1})^{-1}B_kv}{v},
\qquad v\in U,
\]
and require the operator
\[
(\Gamma_{k+1}\&\Lambda_{k+1})\iota_k:U\to U
\]
to be self-adjoint. Section~\ref{sec:nk-wellposed} proves that this
normalization determines a unique Newton factor. The same value of $\alpha$ is used throughout the iteration.

\begin{remark}[Role of the normalization parameter]
The fixed parameter $\alpha>0$ introduced above is a normalization
parameter for the Newton factor and should not be interpreted as a step
size or regularization parameter. For a fixed current feedback, the associated cost operator
$P_{k+1}$ is independent of $\alpha$; the parameter enters only when
selecting the next Newton factor from the non-unique linearized Lur'e
factorization. Different choices of $\alpha$ may therefore lead to
different intermediate feedbacks and cost-operator iterates, while the limiting
optimal value operator is independent of the fixed normalization parameter
under the assumptions of the convergence theorem. In finite dimensions, for a fixed current feedback, the
resolvent-normalized update converges to the classical
zero-state-normalized Newton--Kleinman update as $\alpha\to\infty$.
\end{remark}

\subsection{Classical Newton--Kleinman and the zero-state normalization}
\label{subsec:nk-matrix}

For comparison, consider the finite-dimensional problem
\eqref{eq:intro-fd-system}--\eqref{eq:intro-fd-cost} and let $K_k$ be the
current stabilizing feedback. In the notation of the preceding subsection,
the current auxiliary input may then be chosen as the standard coordinate
\begin{equation}
w=(I+D^*D)^{1/2}(u-K_kx),
\qquad
u=K_kx+(I+D^*D)^{-1/2}w.
\label{eq:nk-matrix-reopening}
\end{equation}
Then
\[
\dot x=(A+BK_k)x+B(I+D^*D)^{-1/2}w
\]
and
\[
\spvek{y}{u}
=
\sbvek{C+DK_k}{K_k}x
+
\sbvek{D}{I}(I+D^*D)^{-1/2}w.
\]
The coefficient of $w$ in the performance output is isometric.  Hence
feedback evaluation gives
\[
(A+BK_k)^*P_{k+1}+P_{k+1}(A+BK_k)
+(C+DK_k)^*(C+DK_k)+K_k^*K_k=0.
\]
Using this identity in \eqref{eq:nk-lure-form} yields
\[
\mathfrak l_{k,P_{k+1}}\!\left[\spvek{x}{w}\right]
=
\|w\|^2
+
2\Ree\left\langle
(I+D^*D)^{1/2}(K_k-K_{k+1}^{\rm NK})x,w
\right\rangle,
\]
where
\[
K_{k+1}^{\rm NK}
=
-(I+D^*D)^{-1}(B^*P_{k+1}+D^*C).
\]
The zero-state normalization selects
\[
(\Gamma_{k+1}\&\Lambda_{k+1})\spvek{x}{w}
=
w+(I+D^*D)^{1/2}(K_k-K_{k+1}^{\rm NK})x.
\]
By \eqref{eq:nk-matrix-reopening}, this is
\[
(I+D^*D)^{1/2}(u-K_{k+1}^{\rm NK}x).
\]
Thus setting the new auxiliary input to zero gives precisely
$u=K_{k+1}^{\rm NK}x$, while
\begin{equation}
\vartheta_{k+1}
=
(I+D^*D)^{1/2}(K_{k+1}^{\rm NK}-K_k)x.
\label{eq:nk-matrix-correction}
\end{equation}
Moreover, the new auxiliary input again has the standard form
\[
v_{k+1}
=
(I+D^*D)^{1/2}(u-K_{k+1}^{\rm NK}x).
\]
Hence repeating the zero-state-normalized construction recovers exactly the
classical Newton--Kleinman iteration.

For the resolvent normalization, retain the same auxiliary input
coordinate \eqref{eq:nk-matrix-reopening}.  Every factor satisfying
\eqref{eq:nk-linearized-lure} then has the form
\[
(\Gamma_{k+1}\&\Lambda_{k+1})\spvek{x}{w}
=
(I+J)w
+
(I+D^*D)^{1/2}(K_k-K_{k+1}^{\rm NK})x,
\qquad
J^*=-J.
\]
On the resolvent lift
\[
w
\mapsto
\spvek{
(\alpha-A_k)^{-1}
B(I+D^*D)^{-1/2}w
}{
w
},
\]
set
\[
T_{k,\alpha}
=
(I+D^*D)^{1/2}(K_k-K_{k+1}^{\rm NK})
(\alpha-A_k)^{-1}
B(I+D^*D)^{-1/2}.
\]
Requiring the factor on this lift to be self-adjoint on $U$ gives
\[
J
=
J_{k,\alpha}
=
\tfrac12
\bigl(
T_{k,\alpha}^*-T_{k,\alpha}
\bigr).
\]
The corresponding feedback matrix is
\[
K_{k+1}^{(\alpha)}
=
K_k
-
(I+D^*D)^{-1/2}
(I+J_{k,\alpha})^{-1}
(I+D^*D)^{1/2}
(K_k-K_{k+1}^{\rm NK}),
\]
and the new auxiliary input is
\[
v_{k+1}
=
(I+J_{k,\alpha})
(I+D^*D)^{1/2}
\bigl(u-K_{k+1}^{(\alpha)}x\bigr).
\]
Unlike under the zero-state normalization, this auxiliary input is generally no longer
in the standard form \eqref{eq:nk-matrix-reopening} and is propagated to the
next Newton step.

The zero-state normalization is available whenever
\[
\spvek{0}{v}\in\dom(A_k\&B_k)
\qquad\text{for all }v\in U,
\]
or equivalently when the control operator $B_k$ of $\Sigma_k$ maps
$U$ into $X$. If the classical feedback products are then well defined,
the usual operator Newton--Kleinman update is recovered. This includes the
bounded case and, with the appropriate $\Lambda$-extension, regular systems
with bounded control and unbounded observation
\cite{MassoudiReis2016,MassoudiThesis}. For genuinely unbounded control,
however, $B^*P$ need not be meaningful, which motivates the system-node
formulation used here.

\subsection{The Newton decrement}
\label{subsec:nk-increment}

\begin{proposition}[Newton decrement]
\label{prop:nk-decrement}
Let $k\geq1$ and let $\Sigma_k$ be the system node associated with the
stabilizing feedback obtained after $k$ Newton steps.  Denote by
\[
\Theta_k:\dom(A_k)\to U
\]
the zero-input observation operator of the correction output
$\vartheta_k=v_{k-1}-v_k$.  Then
\begin{equation}
P_k-P_{k+1}
=
\mathcal C_{\Theta,k,\infty}^*
\mathcal C_{\Theta,k,\infty}
\geq0.
\label{eq:nk-decrement}
\end{equation}
In particular, $P_{k+1}\leq P_k$.
\end{proposition}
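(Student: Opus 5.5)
Apply the Newton equation of step $k-1$ along the zero-input trajectories of $\Sigma_k$ and compare with the feedback evaluation \eqref{eq:nk-feedback-lyap} at step $k$. Both Lur'e forms describe the same physical system (same state, $y$, $u$), only in different auxiliary coordinates. Fix $k\geq1$. By construction, $\Sigma_k$ is the flow inverse of the port $v_k=\Gamma_k\&\Lambda_k\spvek{x}{v_{k-1}}$ of $\Sigma_{k-1}$. By \cite[Thm.~6.6.15]{Staffans2005}, every classical trajectory of $\Sigma_k$ with input $v_k$ is a classical trajectory of $\Sigma_{k-1}$ with input $v_{k-1}$, sharing the state and the performance output $z=(y,u)$. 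Consequently, at every point of the trajectory,
\[
\mathfrak l_{k-1,P}\!\left[\spvek{x}{v_{k-1}}\right]
=
\mathfrak l_{k,P}\!\left[\spvek{x}{v_k}\right]
\]
for every $P=P^*$, because both equal $\tfrac{\mathrm d}{\mathrm dt}\langle Px,x\rangle+\|z\|^2$. I would make this pointwise on $\dom(A_k\&B_k)$: the map $\spvek{x}{v_k}\mapsto\spvek{x}{v_{k-1}}$, with $v_{k-1}$ the output of $\Sigma_k$, maps $\dom(A_k\&B_k)$ onto $\dom(A_{k-1}\&B_{k-1})$. The state derivative $A\&B$ and the performance output are preserved under this map.

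Now take $P=P_k$ and points with $v_k=0$, that is $x\in\dom(A_k)$ and $v_{k-1}=v_{k-1}(x)$ given by the output of $\Sigma_k$ at input zero. The residual factorization \eqref{eq:nk-lure-residual-factorization} at step $k-1$ (so $P_k$ is the Gramian of $\Sigma_{k-1}$) gives
\[
\mathfrak l_{k-1,P_k}\!\left[\spvek{x}{v_{k-1}}\right]
=\|v_k\|^2-\|v_{k-1}-v_k\|^2=-\|\Theta_kx\|^2 .
\]
Hence
\[
2\Ree\langle P_kx,A_kx\rangle+\|Z_kx\|^2=-\|\Theta_kx\|^2,\qquad x\in\dom(A_k).
\]
Subtracting \eqref{eq:nk-feedback-lyap} for $P_{k+1}$ yields the Lyapunov identity
\[
2\Ree\langle (P_k-P_{k+1})x,A_kx\rangle+\|\Theta_kx\|^2=0,\qquad x\in\dom(A_k).
\]

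It remains to identify $P_k-P_{k+1}$ with the Gramian of $\Theta_k$. The correction output $\vartheta_k=v_{k-1}-v_k$ is a linear combination of outputs of the infinite-time well-posed node $\Sigma_k$: its input $v_k$ and the output $v_{k-1}$. The latter is a well-posed output of $\Sigma_k$, obtained via flow inversion; infinite-time boundedness is part of Theorem~\ref{thm:nk-flow-invertible}. Therefore $\Theta_k$ is infinite-time admissible for $\mathcal T_k$, and $\mathcal C_{\Theta,k,\infty}$ is bounded. Set $Q=P_k-P_{k+1}$. Integrating the identity along $\mathcal T_k(t)x_0$ for $x_0\in\dom(A_k)$ gives
\[
\langle Qx_0,x_0\rangle-\langle Q\mathcal T_k(T)x_0,\mathcal T_k(T)x_0\rangle=\|\mathcal C_{\Theta,k,T}x_0\|^2 .
\]
Letting $T\to\infty$ and using strong stability together with boundedness of $Q$ gives $\langle Qx_0,x_0\rangle=\|\mathcal C_{\Theta,k,\infty}x_0\|^2$. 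Density and polarization then give \eqref{eq:nk-decrement}, which is the uniqueness statement already used for \eqref{eq:nk-feedback-lyap}.

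The main obstacle is the first step: rigorously transporting the step-$(k-1)$ Newton identity \eqref{eq:nk-linearized-lure} to the domain of $A_k$. Concretely, one must check that for $x\in\dom(A_k)$ the pair $\spvek{x}{v_{k-1}(x)}$ lies in $\dom(A_{k-1}\&B_{k-1})$ with $\Gamma_k\&\Lambda_k$ vanishing there. I would derive this directly from the bijectivity of $\mathcal R_\lambda$ in the flow-inversion criterion, which identifies the two system-node domains through the graph of the interchanged port.
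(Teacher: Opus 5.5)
Your proposal is correct and follows the paper's proof. You transport the step-$(k-1)$ residual factorization, via flow inversion, to zero-input classical trajectories of $\Sigma_k$ with $v_k=0$, then integrate and let $T\to\infty$ using strong stability of $\mathcal T_k$. The only difference is the order of steps: you subtract the Lyapunov identity for $P_{k+1}$ before integrating, which also yields the incremental Lyapunov identity the paper records afterwards, whereas the paper integrates the identity for $P_k$ directly to obtain $P_k=\mathcal C_{k,\infty}^*\mathcal C_{k,\infty}+\mathcal C_{\Theta,k,\infty}^*\mathcal C_{\Theta,k,\infty}$ and then uses the Gramian definition of $P_{k+1}$.
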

\begin{proof}
Apply \eqref{eq:nk-lure-residual-factorization} from Newton step $k-1$ and
set the new auxiliary input $v_k$ equal to zero. Along classical zero-input
trajectories of $\Sigma_k$ this gives
\[
\tfrac{\mathrm d}{\mathrm dt}\langle P_kx(t),x(t)\rangle
+\|z_k(t)\|^2+\|\vartheta_k(t)\|^2=0.
\]
Integration over $[0,T]$ and density give the corresponding identity for
every initial state. Strong stability of $\mathcal T_k$ allows
$T\to\infty$, so that
\[
P_k
=
\mathcal C_{k,\infty}^*\mathcal C_{k,\infty}
+
\mathcal C_{\Theta,k,\infty}^*\mathcal C_{\Theta,k,\infty}.
\]
Using \eqref{eq:nk-feedback-value} proves \eqref{eq:nk-decrement}.
\end{proof}

The decrement is itself an observability Gramian and hence satisfies
\begin{equation}
2\Ree\langle(P_k-P_{k+1})x,A_kx\rangle+\|\Theta_kx\|^2=0,
\qquad x\in\dom(A_k).
\label{eq:nk-decrement-lyap}
\end{equation}
Under the zero-state normalization, \eqref{eq:nk-matrix-correction} turns
this in finite dimensions into the classical incremental Lyapunov equation
\eqref{eq:intro-fd-increment}. For the resolvent-normalized iteration,
\eqref{eq:nk-decrement-lyap} is the corresponding incremental Lyapunov
identity.

Consequently, the next cost operator can be evaluated in two equivalent
ways. The direct realization uses
$P_{k+1}
=
\mathcal C_{k,\infty}^*\mathcal C_{k,\infty}$,
whereas, for $k\geq1$, the incremental realization uses the positive
Newton decrement,
\begin{equation}
P_{k+1}
=
P_k-
\mathcal C_{\Theta,k,\infty}^*
\mathcal C_{\Theta,k,\infty}.
\label{eq:nk-incremental-update}
\end{equation}
The latter exploits the monotonicity structure of the Newton iteration.
Which realization is preferable computationally depends on the problem
and on the available low-rank representations.

\subsection{ADI and the feedback resolvent}
\label{subsec:nk-computational}

The Gramian representations above make the operator ADI iteration of
\cite{OpmeerReisWollner2013} directly applicable to either realization
of the feedback evaluation. To compute $P_{k+1}$ directly, ADI
is applied to the adjoint pair $(A_k^*,Z_k^*)$. Alternatively, for
$k\geq1$, the incremental realization \eqref{eq:nk-incremental-update}
applies ADI to $(A_k^*,\Theta_k^*)$ and updates the previously computed
cost operator by the resulting decrement.

To describe both cases simultaneously, let $\mathcal O_k$ denote either $Z_k$ in the
direct realization or $\Theta_k$ in the incremental realization. In the
following recursion, $\mathcal O_k^*$ is understood as the adjoint admissible control
operator, and the shifted resolvents are interpreted in the corresponding
extrapolation-space sense; this is precisely the setting of
\cite{OpmeerReisWollner2013}.  For shifts $p_j\in\mathbb C_-$, the ADI
recursion reads
\begin{align*}
V_1&=(A_k^*+p_1I)^{-1}\mathcal O_k^*,
&
S_1&=\sqrt{-2\Ree p_1}\,V_1,\\
V_j&=V_{j-1}-(p_j+\overline{p_{j-1}})
      (A_k^*+p_jI)^{-1}V_{j-1},
&
S_j&=
\begin{bmatrix}
S_{j-1}&\sqrt{-2\Ree p_j}\,V_j
\end{bmatrix},
\qquad j\geq2.
\end{align*}
Then $S_jS_j^*$ approximates the corresponding infinite-time Gramian.
Since $\mathcal T_k$ is boun\-ded and strongly stable and $\mathcal O_k$ is
infinite-time admissible, the convergence theory of
\cite{OpmeerReisWollner2013} applies. In particular,
if the shifts satisfy the non-Blaschke condition
\begin{equation}
\sum_{j=1}^\infty
\frac{-\Ree p_j}{1+|p_j|^2}
=
\infty,
\qquad p_j\in\mathbb C_-,
\label{eq:nk-adi-non-blaschke}
\end{equation}
then $S_jS_j^*$ converges strongly to the infinite-time Gramian.  Condition
\eqref{eq:nk-adi-non-blaschke} holds, for example, for periodic shift
sequences in $\mathbb C_-$. If the observation space of $\mathcal O_k$ is finite dimensional, the ADI
approximants have finite rank. In particular, for the direct feedback-cost
Gramian this is ensured by $\dim Y<\infty$ and $\dim U<\infty$, since the
performance output takes values in $Y\times U$. For a Newton decrement
Gramian, finite dimensionality of $U$ alone ensures finite-rank ADI
approximants.

It remains to evaluate the shifted resolvents required by ADI after a Newton
update without assembling the new generator. Notice that the fixed
normalization parameter $\alpha$ of the Newton iteration is unrelated to
the ADI shift parameters $p_j$.

Fix $\lambda\in\mathbb C_+$. The state
$x=(\lambda-A_{k+1})^{-1}f$ is characterized in the coordinates of
$\Sigma_k$ by
\[
\lambda x- A_k\&B_k\spvek{x}{v_k}=f,
\qquad
\Gamma_{k+1}\&\Lambda_{k+1}\spvek{x}{v_k}=0.
\]
For prescribed $v_k$, the first equation gives
\[
x
=
(\lambda-A_k)^{-1}f
+(\lambda-A_{k,-1})^{-1}B_kv_k.
\]
The corresponding auxiliary-input transfer function is the operator on $U$ defined by
\begin{equation}
\mathbf N_{k+1}(\lambda)v
:=
\Gamma_{k+1}\&\Lambda_{k+1}
\spvek{(\lambda-A_{k,-1})^{-1}B_kv}{v}.
\label{eq:nk-computational-port-symbol}
\end{equation}
Section~\ref{sec:nk-wellposed} proves that $\mathbf N_{k+1}(\lambda)$ is invertible
for all $\lambda\in\mathbb C_+$ and that
$\|\mathbf N_{k+1}(\lambda)^{-1}\|\leq2$. Eliminating $v_k$ therefore yields
\begin{equation}
\begin{aligned}
(\lambda-A_{k+1})^{-1}f
={}&(\lambda-A_k)^{-1}f\\
&-(\lambda-A_{k,-1})^{-1}B_k
  \mathbf N_{k+1}(\lambda)^{-1}
  \Gamma_{k+1}\&\Lambda_{k+1}
  \spvek{(\lambda-A_k)^{-1}f}{0}.
\end{aligned}
\label{eq:nk-computational-resolvent-update}
\end{equation}
Thus the resolvent of $A_{k+1}$ is obtained from resolvent solves for
$A_k$ and an inversion on the input space $U$; the generator $A_{k+1}$
itself need not be assembled.

For the updated system node $\Sigma_{k+1}$, the correction output satisfies
\begin{equation}
\Theta_{k+1}(\lambda-A_{k+1})^{-1}f
=
- \mathbf N_{k+1}(\lambda)^{-1}
  \Gamma_{k+1}\&\Lambda_{k+1}
  \spvek{(\lambda-A_k)^{-1}f}{0}.
\label{eq:nk-computational-correction-resolvent}
\end{equation}
Taking adjoints of
\eqref{eq:nk-computational-resolvent-update} and
\eqref{eq:nk-computational-correction-resolvent} provides exactly the
shifted adjoint solves required in the ADI recursion above. Indeed, for
$p\in\mathbb C_-$ one uses
\[
(A_{k+1}^*+pI)^{-1}
=-\bigl(( -\overline p-A_{k+1})^{-1}\bigr)^*.
\]
The Schur-complement reduction is performed before spatial discretization.
For PDE systems, only the shifted state and adjoint resolvent problems
associated with $A_k$ have to be approximated; the updated generator itself
need not be assembled.

\section{Feasibility and stability of the Newton step}
\label{sec:nk-wellposed}

We establish the two ingredients needed to complete the Newton step:
the construction of the normalized Newton factor and the flow inversion
defining the next feedback. Fix $k\geq0$ and suppose that
$\Sigma_k$ is stabilizing.  To lighten the notation in this section, write
\[
A=A_k,\qquad B=B_k,\qquad Z\&W=Z_k\&W_k,\qquad P=P_{k+1},
\qquad \mathcal E=\dom(A\&B).
\]
The space $\mathcal E$ is equipped with the graph norm.  Since the semigroup
$\mathcal T_k$ is strongly stable, the uniform boundedness principle gives
$\sup_{t\geq0}\|\mathcal T_k(t)\|<\infty$.  Hence
$\Cplus\subset\rho(A)$ by the standard Laplace-resolvent formula; see
\cite[Prop.~2.3.1]{TucsnakWeiss2009}.

Let $\mathfrak b_k$ denote the Hermitian polarization of
\eqref{eq:nk-lure-form} with $P=P_{k+1}$, that is,
\begin{equation}
\begin{aligned}
\mathfrak b_k\!\left(\spvek{x}{v},\spvek{\xi}{w}\right)
={}&
\left\langle A\&B\spvek{x}{v},P\xi\right\rangle
+\left\langle Px,A\&B\spvek{\xi}{w}\right\rangle \\
&+\left\langle
Z\&W\spvek{x}{v},Z\&W\spvek{\xi}{w}
\right\rangle.
\end{aligned}
\label{eq:nk-polarized-form}
\end{equation}
By \eqref{eq:nk-lure-zero-slice} and polarization,
\begin{equation}
\mathfrak b_k\!\left(\spvek{x}{0},\spvek{\xi}{0}\right)=0,
\qquad x,\xi\in\dom(A).
\label{eq:nk-form-kernel-zero}
\end{equation}

\subsection{Construction and admissibility of the Newton factor}
\label{subsec:nk-factor-construction}

For the resol\-vent-normalized iteration, the fixed lift from
Section~\ref{sec:nk} is
\[
\iota_kv
=
\spvek{(\alpha-A_{-1})^{-1}Bv}{v},
\qquad v\in U.
\]  By
\cite[Lem.~4.7.3]{Staffans2005}, $\iota_k$ is bounded from $U$ into
$\mathcal E$.

Since the zero-input--zero-input block vanishes by
\eqref{eq:nk-form-kernel-zero}, the polarized form is determined by its
state--input and input--input blocks. Moreover, for every
$\spvek{x}{v}\in\mathcal E$,
\[
x-(\alpha-A_{-1})^{-1}Bv\in\dom(A),
\]
because subtracting the resolvent lift $\iota_kv$ leaves an element of
$\mathcal E$ with zero input component. Define
$\Delta_{k+1}:\dom(A)\to U$ and
$E_{k+1}=E_{k+1}^*\in\mathcal L(U)$ by
\begin{align}
\langle\Delta_{k+1}x,w\rangle
&=
\mathfrak b_k\!\left(\spvek{x}{0},\iota_kw\right),
& x&\in\dom(A),\ w\in U,
\notag\\
\langle E_{k+1}v,w\rangle
&=
\mathfrak b_k(\iota_kv,\iota_kw),
& v,w&\in U,
\label{eq:nk-factor-data}
\end{align}
and set
\begin{equation}
\Gamma_{k+1}\&\Lambda_{k+1}\spvek{x}{v}
=
\Delta_{k+1}\!\left(
x-(\alpha-A_{-1})^{-1}Bv
\right)
+\tfrac12(E_{k+1}+I)v.
\label{eq:nk-factor}
\end{equation}
\begin{lemma}[Normalized Newton factor]
\label{lem:nk-port-polarization}
Let $k\geq0$ and suppose that $\Sigma_k$ is stabilizing.  Let
$\Delta_{k+1}$ and $E_{k+1}$ be defined by
\eqref{eq:nk-factor-data}.  Then
\[
\Delta_{k+1}\in\mathcal L(\dom(A_k),U),
\qquad
E_{k+1}=E_{k+1}^*\in\mathcal L(U),
\]
where $\dom(A_k)$ carries its graph norm.  The operator
$\Gamma_{k+1}\&\Lambda_{k+1}$ defined by
\eqref{eq:nk-factor} belongs to
\[
\mathcal L(\dom(A_k\&B_k),U)
\]
and satisfies
\begin{equation}
\begin{aligned}
\mathfrak b_k\!\left(\spvek{x}{v},\spvek{\xi}{w}\right)
={}&
\left\langle
(\Gamma_{k+1}\&\Lambda_{k+1})\spvek{x}{v},w
\right\rangle
\\
&+
\left\langle
v,(\Gamma_{k+1}\&\Lambda_{k+1})\spvek{\xi}{w}
\right\rangle
-\langle v,w\rangle
\end{aligned}
\label{eq:nk-polarized-newton}
\end{equation}
for all
$\spvek{x}{v},\spvek{\xi}{w}\in\dom(A_k\&B_k)$.
It is the unique factor in
$\mathcal L(\dom(A_k\&B_k),U)$ satisfying
\eqref{eq:nk-polarized-newton} for which
\[
(\Gamma_{k+1}\&\Lambda_{k+1})\iota_k:U\to U
\]
is self-adjoint.  In particular,
\eqref{eq:nk-linearized-lure} holds and
\begin{equation}
(\Gamma_{k+1}\&\Lambda_{k+1})\spvek{x}{0}
=
\Delta_{k+1}x,
\qquad x\in\dom(A_k).
\label{eq:nk-factor-zero-input}
\end{equation}
\end{lemma}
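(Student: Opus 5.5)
The proof rests on a single decomposition. Every element of $\mathcal E$ splits as $\spvek{x}{v}=\spvek{x_0}{0}+\iota_kv$ with $x_0:=x-(\alpha-A_{-1})^{-1}Bv\in\dom(A)$. The map $\spvek{x}{v}\mapsto(x_0,v)$ is bounded from $\mathcal E$ into $\dom(A)\times U$, because $\iota_k$ is bounded into $\mathcal E$ and $\dom(A)\times\{0\}$ is a closed subspace of $\mathcal E$ on which the graph norms agree. By sesquilinearity, the form $\mathfrak b_k$ then splits into four blocks, and the zero-input block vanishes by \eqref{eq:nk-form-kernel-zero}.

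The first step is boundedness. The form $\mathfrak b_k$ is bounded on $\mathcal E\times\mathcal E$: $A\&B$ is bounded from $\mathcal E$ into $X$, $P\in\mathcal L(X)$, and $Z\&W\in\mathcal L(\mathcal E,Y\times U)$ by the system-node axioms. Composing with the bounded maps $x\mapsto\spvek{x}{0}$ from $\dom(A)$ and $\iota_k$ from $U$, the Riesz representation theorem yields $\Delta_{k+1}\in\mathcal L(\dom(A),U)$ and $E_{k+1}\in\mathcal L(U)$. Hermitian symmetry of $\mathfrak b_k$ gives $E_{k+1}=E_{k+1}^*$. Consequently $\Gamma_{k+1}\&\Lambda_{k+1}$, defined by \eqref{eq:nk-factor}, lies in $\mathcal L(\mathcal E,U)$. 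Taking $v=0$ gives \eqref{eq:nk-factor-zero-input}. Since $\Delta_{k+1}$ annihilates the $x_0$-part of $\iota_kv$ (that part is zero), we get $(\Gamma_{k+1}\&\Lambda_{k+1})\iota_k=\frac12(E_{k+1}+I)$, which is self-adjoint.

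The second step verifies \eqref{eq:nk-polarized-newton} block by block, writing $\spvek{x}{v}=\spvek{x_0}{0}+\iota_kv$ and $\spvek{\xi}{w}=\spvek{\xi_0}{0}+\iota_kw$. The state--state block vanishes on both sides: the right side gives $\langle\Delta x_0,0\rangle+\langle0,\Delta\xi_0\rangle-0$. For the state--input block, the left side is $\langle\Delta x_0,w\rangle$ by definition, and the right side gives $\langle\Delta x_0,w\rangle+0-0$. The input--state block follows by Hermitian symmetry. For the input--input block, the right side is $\langle\frac12(E+I)v,w\rangle+\langle v,\frac12(E+I)w\rangle-\langle v,w\rangle=\langle Ev,w\rangle$, which equals $\mathfrak b_k(\iota_kv,\iota_kw)$. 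Putting $\spvek{\xi}{w}=\spvek{x}{v}$ then gives \eqref{eq:nk-linearized-lure}, since $\mathfrak b_k[\cdot,\cdot]=\mathfrak l_{k,P_{k+1}}$.

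The third step, uniqueness, is the only point requiring care. Let $F$ be the difference of two factors satisfying \eqref{eq:nk-polarized-newton}. Then
\[
\langle F\spvek{x}{v},w\rangle+\langle v,F\spvek{\xi}{w}\rangle=0\quad\text{for all arguments.}
\]
Taking $w=0$ with $\xi$ arbitrary in $\dom(A)$ gives $\langle v,F\spvek{\xi}{0}\rangle=0$ for all $v$, so $F$ vanishes on $\dom(A)\times\{0\}$. Taking the arguments $\iota_kv$ and $\iota_kw$ shows that $G:=F\iota_k$ satisfies $G^*=-G$. If both factors are self-adjoint on the lift, then $G$ is also self-adjoint, hence $G=0$. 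By the decomposition, $F=0$ on all of $\mathcal E$, which proves uniqueness.
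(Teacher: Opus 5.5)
Your proposal is correct and follows the paper's proof essentially step for step. It uses the same splitting $\spvek{x}{v}=\spvek{x_0}{0}+\iota_kv$, Riesz representation for $\Delta_{k+1}$ and $E_{k+1}$, a block-wise reading of \eqref{eq:nk-polarized-newton} using \eqref{eq:nk-form-kernel-zero}, and uniqueness via a skew-adjoint difference eliminated by the normalization. You also make explicit two points the paper leaves implicit: the factor is bounded on $\dom(A_k\&B_k)$ because the splitting map is bounded, and the constructed factor meets the normalization since $(\Gamma_{k+1}\&\Lambda_{k+1})\iota_k=\tfrac12(E_{k+1}+I)$.
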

\begin{proof}
For $x\in\dom(A)$, the map
$w\mapsto\mathfrak b_k(\spvek{x}{0},\iota_kw)$ is a bounded functional on
$U$, and hence defines $\Delta_{k+1}x$ by the Riesz representation theorem.
The boundedness of $\iota_k$ gives
$\Delta_{k+1}\in\mathcal L(\dom(A),U)$, where $\dom(A)$ carries its graph
norm.  Likewise, the restriction
$(v,w)\mapsto\mathfrak b_k(\iota_kv,\iota_kw)$ is a bounded Hermitian form
on $U$ and therefore defines a bounded self-adjoint operator $E_{k+1}$.

Write $\iota_kv=\spvek{x_v}{v}$.  Every
$\spvek{x}{v}\in\dom(A\&B)$ decomposes as
$\spvek{x}{v}=\spvek{x-x_v}{0}+\iota_kv$.  Applying this decomposition to
both arguments of $\mathfrak b_k$ and using
\eqref{eq:nk-form-kernel-zero} gives
\[
\begin{aligned}
\mathfrak b_k\!\left(\spvek{x}{v},\spvek{\xi}{w}\right)
={}&
\langle\Delta_{k+1}(x-x_v),w\rangle
+\langle v,\Delta_{k+1}(\xi-x_w)\rangle
+\langle E_{k+1}v,w\rangle .
\end{aligned}
\]
This is exactly \eqref{eq:nk-polarized-newton} with the factor
\eqref{eq:nk-factor}. On the diagonal it gives
\eqref{eq:nk-linearized-lure}, and $v=0$ gives
\eqref{eq:nk-factor-zero-input}.

If two factors satisfy \eqref{eq:nk-polarized-newton}, their difference
vanishes on all $\spvek{x}{0}$ with $x\in\dom(A)$ and hence has the form $Kv$ with
$K\in\mathcal L(U)$.  The polarized identity implies $K^*=-K$, whereas the
normalization that $(\Gamma_{k+1}\&\Lambda_{k+1})\iota_k$ be self-adjoint forces $K=0$.
\end{proof}

For the subsequent flow inversion we also need the zero-input part of the
Newton factor to be infinite-time admissible. This follows directly from
the finite-time maps of the current system node.

\begin{lemma}[Admissibility of the zero-input factor]
\label{lem:nk-delta-admissible}
Let $\Delta_{k+1}$ be the zero-input part of the Newton factor constructed
in \eqref{eq:nk-factor-data}.  Then
$\Delta_{k+1}:\dom(A_k)\to U$
is infinite-time admissible for $\mathcal T_k$.  If
$\mathcal B_{k,T}$, $\mathcal C_{k,T}$, and $\mathcal D_{k,T}$ denote the
finite-time maps of $\Sigma_k$, then the finite-time
observation maps of $\Delta_{k+1}$ satisfy
\begin{equation}
\mathcal C_{\Delta,k+1,T}
=
\mathcal B_{k,T}^*P_{k+1}\mathcal T_k(T)
+\mathcal D_{k,T}^*\mathcal C_{k,T},
\qquad T>0.
\label{eq:nk-delta-green-map}
\end{equation}
\end{lemma}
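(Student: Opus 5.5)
Our plan is to compute $\Delta_{k+1}x$ explicitly along zero-input trajectories and identify it with the output of the right-hand side of \eqref{eq:nk-delta-green-map}. Since that right-hand side is a bounded operator from $X$ to $L^2(0,T;U)$ whose norm is bounded uniformly in $T$, this gives admissibility and infinite-time admissibility at once. The uniform bound follows because $\Sigma_k$ is infinite-time well posed, so $\sup_T\|\mathcal B_{k,T}\|$, $\sup_T\|\mathcal C_{k,T}\|$, $\sup_T\|\mathcal D_{k,T}\|$ and $\sup_t\|\mathcal T_k(t)\|$ are finite.

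\emph{Step 1: a pointwise identity.} Fix $x_0\in\dom(A)$ and $w\in U$, and set $x(t)=\mathcal T_k(t)x_0\in\dom(A)$. By \eqref{eq:nk-factor-data} and \eqref{eq:nk-polarized-form},
\[
\langle\Delta_{k+1}x(t),w\rangle
=\langle Ax(t),P\xi_w\rangle+\langle Px(t),A\&B\,\iota_kw\rangle+\langle Zx(t),Z\&W\,\iota_kw\rangle,
\]
where $\iota_kw=\spvek{\xi_w}{w}$. We do not want to work with this fixed lift directly. Instead, we test against an arbitrary smooth input. Take $u\in C_c^\infty((0,T);U)$ and let $\eta$ be the state of the classical trajectory of $\Sigma_k$ driven by $u$ with initial state $0$. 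Then $\spvek{\eta(s)}{u(s)}\in\mathcal E$ for all $s$.

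Because the zero-input block vanishes by \eqref{eq:nk-form-kernel-zero}, the polarization identity in the proof of Lemma~\ref{lem:nk-port-polarization} gives
\[
\mathfrak b_k\!\left(\spvek{x}{0},\spvek{\xi}{w}\right)=\langle\Delta_{k+1}x,w\rangle
\]
for every $\spvek{\xi}{w}\in\mathcal E$. Applying this with $\spvek{\xi}{w}=\spvek{\eta(t)}{u(t)}$ yields
\[
\langle\Delta_{k+1}x(t),u(t)\rangle
=\tfrac{\mathrm d}{\mathrm dt}\langle Px(t),\eta(t)\rangle+\langle Zx(t),z_u(t)\rangle,
\]
where $z_u$ is the output of the trajectory driven by $u$. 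Here we use $\dot x=Ax$ and $\dot\eta=A\&B\spvek{\eta}{u}$.

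\emph{Step 2: integrate and read off the adjoint.} Integrating over $[0,T]$ and using $\eta(0)=0$, $\eta(T)=\mathcal B_{k,T}u$ and $z_u=\mathcal D_{k,T}u$, we obtain
\[
\int_0^T\langle\Delta_{k+1}x(t),u(t)\rangle\,\mathrm dt
=\langle P\mathcal T_k(T)x_0,\mathcal B_{k,T}u\rangle+\langle\mathcal C_{k,T}x_0,\mathcal D_{k,T}u\rangle.
\]
Such $u$ are dense in $L^2(0,T;U)$. Hence $\Delta_{k+1}\mathcal T_k(\cdot)x_0$ on $[0,T]$ equals
\[
\bigl(\mathcal B_{k,T}^*P\mathcal T_k(T)+\mathcal D_{k,T}^*\mathcal C_{k,T}\bigr)x_0 .
\]
This quantity is bounded by a $T$-independent constant times $\|x_0\|$, which proves \eqref{eq:nk-delta-green-map} and the infinite-time admissibility.

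\emph{Main obstacle.} The delicate point is justifying Step 1 rigorously. We need that for smooth compactly supported $u$ and zero initial state, $(\eta,u,z_u)$ is a classical trajectory, so that $t\mapsto\spvek{\eta(t)}{u(t)}\in\mathcal E$ is continuous into the graph norm. We also need the product rule for $\langle Px(t),\eta(t)\rangle$. The classical-trajectory property holds by Staffans' existence result for $C^2$ inputs with compatible initial data \cite[Thm.~4.3.9]{Staffans2005}. The product rule holds since $x\in C^1([0,T];X)$ and $\eta\in C^1([0,T];X)$. Continuity of $t\mapsto\langle\Delta_{k+1}x(t),u(t)\rangle$ then follows from the boundedness of $\Delta_{k+1}$ on $\dom(A)$, established in Lemma~\ref{lem:nk-port-polarization}, so all the integrals are well defined.
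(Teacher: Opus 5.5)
Your proposal is correct and is essentially the paper's own proof. You use \eqref{eq:nk-form-kernel-zero} to extend the identity $\langle\Delta_{k+1}x,w\rangle=\mathfrak b_k\bigl(\spvek{x}{0},\spvek{\xi}{w}\bigr)$ from the lift $\iota_kw$ to all of $\dom(A_k\&B_k)$, evaluate it along $\mathcal T_k(t)x_0$ and a zero-state classical trajectory driven by a $C_c^\infty$ input (via \cite[Thm.~4.3.9]{Staffans2005}), integrate over $[0,T]$, and conclude by density and the uniform bounds from infinite-time well-posedness, exactly as the paper does; your remarks on the product rule and the continuity of the integrand just make explicit what the paper leaves implicit.
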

\begin{proof}
Let $x\in\dom(A)$ and $\spvek{\xi}{v}\in\mathcal E$. Since
$\spvek{\xi}{v}-\iota_kv$ has zero input component,
\eqref{eq:nk-form-kernel-zero} gives
\[
\langle\Delta_{k+1}x,v\rangle
=
\mathfrak b_k\!\left(\spvek{x}{0},\spvek{\xi}{v}\right).
\]
Hence, by \eqref{eq:nk-polarized-form},
\begin{equation}
\begin{aligned}
\langle\Delta_{k+1}x,v\rangle
={}&
\langle PAx,\xi\rangle
+\left\langle Px, A\&B\spvek{\xi}{v}\right\rangle
+\left\langle Zx,Z\&W\spvek{\xi}{v}\right\rangle .
\end{aligned}
\label{eq:nk-adjoint-correction}
\end{equation}
Let $h\in C_c^\infty((0,T);U)$ and let $(\xi,h,\zeta)$ be the zero-state
classical trajectory of $\Sigma_k$.  Its existence follows from
\cite[Thm.~4.3.9]{Staffans2005}.  Applying
\eqref{eq:nk-adjoint-correction} to $\mathcal T_k(t)x$ and
$\spvek{\xi(t)}{h(t)}$, and integrating over $[0,T]$, yields
\[
\int_0^T
\langle\Delta_{k+1}\mathcal T_k(t)x,h(t)\rangle\,dt
=
\left\langle
\bigl(\mathcal B_{k,T}^*P\mathcal T_k(T)
+\mathcal D_{k,T}^*\mathcal C_{k,T}\bigr)x,h
\right\rangle_{L^2}.
\]
Density gives \eqref{eq:nk-delta-green-map}.  Since $\Sigma_k$ is
infinite-time well posed and $\mathcal T_k$ is bounded, the right-hand side
of
\[
\|\mathcal C_{\Delta,k+1,T}\|
\leq
\|\mathcal B_{k,T}\|\,\|P\|\,\|\mathcal T_k(T)\|
+\|\mathcal D_{k,T}\|\,\|\mathcal C_{k,T}\|
\]
is bounded uniformly in $T$.  Thus $\Delta_{k+1}$ is infinite-time
admissible.
\end{proof}

We denote the resulting infinite-time observation map by
$\mathcal C_{\Delta,k+1,\infty}$.

\subsection{Uniform invertibility of the Newton transfer function}

Recall the transfer function $\mathbf N_{k+1}(\lambda)$ from
\eqref{eq:nk-computational-port-symbol}.  The following estimate is the key
to both flow inversion and the numerical resolvent formula.
\begin{theorem}[Uniform invertibility of the Newton transfer function]
\label{thm:nk-newton-symbol}
Let $k\geq0$, suppose that $\Sigma_k$ is stabilizing, and let
$\Gamma_{k+1}\&\Lambda_{k+1}$ be the normalized Newton factor from
Lemma~\ref{lem:nk-port-polarization}.  Then, for all
$\lambda\in\Cplus$ and $v\in U$,
\begin{equation}
\begin{aligned}
2\Ree\langle \mathbf N_{k+1}(\lambda)v,v\rangle
={}&
\|v\|^2
+
2\Ree\lambda\,
\left\langle
P_{k+1}(\lambda-A_{k,-1})^{-1}B_kv,
(\lambda-A_{k,-1})^{-1}B_kv
\right\rangle
\\
&+
\left\|
(Z_k\&W_k)
\spvek{(\lambda-A_{k,-1})^{-1}B_kv}{v}
\right\|^2 .
\end{aligned}
\label{eq:nk-frequency-green}
\end{equation}
Consequently,
\begin{equation}
\Ree\langle \mathbf N_{k+1}(\lambda)v,v\rangle
\geq\tfrac12\|v\|^2,
\qquad \lambda\in\Cplus,
\label{eq:nk-strict-accretivity}
\end{equation}
and $\mathbf N_{k+1}(\lambda)$ is invertible with
\begin{equation}
\sup_{\lambda\in\Cplus}
\|\mathbf N_{k+1}(\lambda)^{-1}\|
\leq2.
\label{eq:nk-Q-Hinf}
\end{equation}
Moreover,
\[
\lambda\mapsto\mathbf N_{k+1}(\lambda)^{-1}
\]
belongs to $H^\infty(\Cplus;\mathcal L(U))$, and
\begin{equation}
\mathbf N_{k+1}(\lambda)-\mathbf N_{k+1}(\mu)
=
\Delta_{k+1}
\bigl(
(\lambda-A_{k,-1})^{-1}
-
(\mu-A_{k,-1})^{-1}
\bigr)B_k.
\label{eq:nk-port-resolvent-identity}
\end{equation}
\end{theorem}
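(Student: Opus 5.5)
Write $x_\lambda:=(\lambda-A_{k,-1})^{-1}B_kv$, so that $\spvek{x_\lambda}{v}\in\mathcal E$ by \cite[Lem.~4.7.3]{Staffans2005} and $\mathbf N_{k+1}(\lambda)v=\Gamma_{k+1}\&\Lambda_{k+1}\spvek{x_\lambda}{v}$. The identity \eqref{eq:nk-frequency-green} will come from evaluating the diagonal Newton equation \eqref{eq:nk-linearized-lure} at $\spvek{x_\lambda}{v}$. The right-hand side is $2\Ree\langle\mathbf N_{k+1}(\lambda)v,v\rangle-\|v\|^2$. For the left-hand side, use $A\&B\spvek{x_\lambda}{v}=A_{-1}x_\lambda+B_kv=\lambda x_\lambda$. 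Hence $2\Ree\langle Px_\lambda,A\&B\spvek{x_\lambda}{v}\rangle=2\Ree\bar\lambda\langle Px_\lambda,x_\lambda\rangle=2\Ree\lambda\,\langle Px_\lambda,x_\lambda\rangle$, because $\langle Px_\lambda,x_\lambda\rangle$ is real. The remaining term is $\|Z\&W\spvek{x_\lambda}{v}\|^2$. Rearranging gives \eqref{eq:nk-frequency-green}.

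For $\lambda\in\Cplus$ we have $P_{k+1}\ge0$, and both extra terms in \eqref{eq:nk-frequency-green} are nonnegative. This gives \eqref{eq:nk-strict-accretivity}. By Cauchy--Schwarz, $\|\mathbf N_{k+1}(\lambda)v\|\ge\tfrac12\|v\|$, so $\mathbf N_{k+1}(\lambda)$ is injective with closed range. The adjoint satisfies the same accretivity bound, since $\Ree\langle\mathbf N^*v,v\rangle=\Ree\langle v,\mathbf Nv\rangle$. The adjoint is therefore injective, the range of $\mathbf N_{k+1}(\lambda)$ is dense, and $\mathbf N_{k+1}(\lambda)$ is invertible with $\|\mathbf N_{k+1}(\lambda)^{-1}\|\le2$. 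Here we need $\mathbf N_{k+1}(\lambda)\in\mathcal L(U)$. This follows because $\Gamma_{k+1}\&\Lambda_{k+1}\in\mathcal L(\mathcal E,U)$ by Lemma~\ref{lem:nk-port-polarization} and the lift $v\mapsto\spvek{x_\lambda}{v}$ is bounded into $\mathcal E$.

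For \eqref{eq:nk-port-resolvent-identity}, note that $\spvek{x_\lambda}{v}-\spvek{x_\mu}{v}=\spvek{x_\lambda-x_\mu}{0}$. This lies in $\mathcal E$ with zero input, so $x_\lambda-x_\mu\in\dom(A_k)$. Then \eqref{eq:nk-factor-zero-input} gives $\mathbf N_{k+1}(\lambda)v-\mathbf N_{k+1}(\mu)v=\Delta_{k+1}(x_\lambda-x_\mu)$, which is \eqref{eq:nk-port-resolvent-identity}.

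The main remaining issue is analyticity, after which the $H^\infty$ claim follows immediately from the uniform bound. Fix $\mu\in\Cplus$, for instance $\mu=\alpha$. By the resolvent identity, $x_\lambda-x_\mu=(\mu-\lambda)(\lambda-A_k)^{-1}x_\mu$. Since $x_\mu\in X$, the map $\lambda\mapsto(\lambda-A_k)^{-1}x_\mu$ is analytic with values in $\dom(A_k)$ equipped with the graph norm. This holds because $A_k(\lambda-A_k)^{-1}=\lambda(\lambda-A_k)^{-1}-I$ is analytic in $\mathcal L(X)$. Composing with $\Delta_{k+1}\in\mathcal L(\dom(A_k),U)$ shows that $\lambda\mapsto\mathbf N_{k+1}(\lambda)=\mathbf N_{k+1}(\mu)+(\mu-\lambda)\Delta_{k+1}(\lambda-A_k)^{-1}(\mu-A_{k,-1})^{-1}B_k$ is analytic in operator norm on $\Cplus$. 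Its pointwise inverse is then analytic by the Neumann series, and the bound of $2$ places it in $H^\infty(\Cplus;\mathcal L(U))$.
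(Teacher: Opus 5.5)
Your proposal is correct and follows the paper's proof essentially step by step. You evaluate the diagonal Newton equation \eqref{eq:nk-linearized-lure} at the resolvent lift, obtain invertibility from accretivity via the Cauchy--Schwarz and adjoint-injectivity argument, and get \eqref{eq:nk-port-resolvent-identity} by subtracting two lifts and applying \eqref{eq:nk-factor-zero-input}. The only difference is that you derive the operator-norm holomorphy of $\mathbf N_{k+1}$ explicitly from \eqref{eq:nk-port-resolvent-identity} and the analyticity of $\lambda\mapsto(\lambda-A_k)^{-1}$ in $\mathcal L(X,\dom(A_k))$, whereas the paper simply appeals to the system-node resolvent representation. (Under the paper's convention your $\bar\lambda$ should be $\lambda$, which is immaterial since $\langle P_{k+1}x_\lambda,x_\lambda\rangle$ is real.)
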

\begin{proof}
The element
$\spvek{(\lambda-A_{-1})^{-1}Bv}{v}$ satisfies
\[
A\&B
\spvek{(\lambda-A_{-1})^{-1}Bv}{v}
=
\lambda(\lambda-A_{-1})^{-1}Bv.
\]
Substitution into \eqref{eq:nk-linearized-lure} gives
\eqref{eq:nk-frequency-green}.  Since $P\geq0$,
\eqref{eq:nk-strict-accretivity} follows. By the Cauchy--Schwarz inequality,
\eqref{eq:nk-strict-accretivity} implies
\[
\|\mathbf N_{k+1}(\lambda)v\|
\geq \tfrac12\|v\|,
\qquad v\in U.
\]
Hence $\mathbf N_{k+1}(\lambda)$ is injective and has closed range.
The same accretivity estimate holds for
$\mathbf N_{k+1}(\lambda)^*$, so that
$\ker \mathbf N_{k+1}(\lambda)^*=\{0\}$. Consequently,
$\operatorname{ran}\mathbf N_{k+1}(\lambda)$ is dense in $U$.
It is therefore equal to $U$, and
\[
\|\mathbf N_{k+1}(\lambda)^{-1}\|\leq2.
\]
This proves invertibility and \eqref{eq:nk-Q-Hinf}.  The map
$\lambda\mapsto\mathbf N_{k+1}(\lambda)$ is holomorphic on $\Cplus$
by the system-node resolvent representation. Since it is invertible
throughout $\Cplus$, the holomorphic inverse theorem shows that
$\lambda\mapsto\mathbf N_{k+1}(\lambda)^{-1}$ is holomorphic.
Together with \eqref{eq:nk-Q-Hinf}, this gives
$\mathbf N_{k+1}^{-1}\in H^\infty(\Cplus;\mathcal L(U))$.
Finally, subtracting the two resolvent lifts in the definition of
$\mathbf N_{k+1}$ and using \eqref{eq:nk-factor-zero-input} gives
\eqref{eq:nk-port-resolvent-identity}.
\end{proof}

\subsection{Flow inversion and preservation of stability}

We can now complete one Newton step.
\begin{theorem}[Feasibility of one Newton step]
\label{thm:nk-flow-invertible}
Let $k\geq0$ and suppose that $\Sigma_k$ is stabilizing.  Let
$\Gamma_{k+1}\&\Lambda_{k+1}$ be the normalized Newton factor from
Lemma~\ref{lem:nk-port-polarization}.  Then the port relation
\[
v_{k+1}
=
(\Gamma_{k+1}\&\Lambda_{k+1})
\spvek{x}{v_k}
\]
is flow invertible.  Its flow inverse is an infinite-time well-posed system
node with external input $v_{k+1}$, and its zero-input semigroup
$\mathcal T_{k+1}$ is strongly stable. Hence setting $v_{k+1}=0$ defines
the next stabilizing feedback, with associated system node $\Sigma_{k+1}$.

Its generator $A_{k+1}$ satisfies
\begin{equation}
\begin{aligned}
(\lambda-A_{k+1})^{-1}
={}&
(\lambda-A_k)^{-1}
\\
&-
(\lambda-A_{k,-1})^{-1}B_k
\mathbf N_{k+1}(\lambda)^{-1}
\Delta_{k+1}(\lambda-A_k)^{-1},
\qquad \lambda\in\Cplus.
\end{aligned}
\label{eq:nk-closed-resolvent}
\end{equation}
Moreover, the zero-input observation operator
$\Theta_{k+1}:\dom(A_{k+1})\to U$
of the correction output
$\vartheta_{k+1}=v_k-v_{k+1}$ is infinite-time admissible for
$\mathcal T_{k+1}$.
\end{theorem}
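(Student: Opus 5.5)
We apply Staffans' full-flow-inversion criterion, recalled in Section~\ref{subsec:prelim-feedback}, to the node
\[
\mathcal N=\begin{bmatrix}A_k\&B_k\\ \Gamma_{k+1}\&\Lambda_{k+1}\end{bmatrix},
\]
which is a system node on $(X,U,U)$ by Lemma~\ref{lem:nk-port-polarization}. Fix $\lambda\in\Cplus$. To show that $\mathcal R_\lambda$ is bijective, take data $(f,g)\in X\times U$ and solve
\[
\lambda x-A_k\&B_k\spvek{x}{v}=f,\qquad -\Gamma_{k+1}\&\Lambda_{k+1}\spvek{x}{v}=g.
\]
The first equation forces $x=(\lambda-A_k)^{-1}f+(\lambda-A_{k,-1})^{-1}B_kv$. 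By \eqref{eq:nk-factor-zero-input} the second equation then becomes
\[
\mathbf N_{k+1}(\lambda)v=-g-\Delta_{k+1}(\lambda-A_k)^{-1}f.
\]
Theorem~\ref{thm:nk-newton-symbol} makes this uniquely solvable, so $\mathcal R_\lambda$ is bijective. Its upper-left block $M_{11}$ is obtained by setting $g=0$, which is exactly the right-hand side of \eqref{eq:nk-closed-resolvent}.

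We next check the remaining conditions on $M_{11}$. Call this candidate $R(\lambda)$. We show that it is a pseudo-resolvent on $\Cplus$, injective, with dense range. The resolvent identity follows by direct computation from \eqref{eq:nk-port-resolvent-identity} and the resolvent identity for $A_k$. Injectivity holds because $R(\lambda)f=0$ gives $x=0$, hence $\lambda x-A_k\&B_k\spvek{x}{v}=-B_kv=f$ in $X_{-1}$; the second equation then forces $v=0$, so $f=0$. Dense range follows from the same argument applied to the adjoint (dual) node. Hence $R$ is the resolvent of a closed densely defined operator $A_{k+1}$.

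The main obstacle is showing that $A_{k+1}$ generates a bounded, strongly stable $C_0$-semigroup and that the flow inverse is infinite-time well posed. Our first approach is to construct the flow inverse in the time domain rather than through Hille--Yosida. The factor $\mathbf N_{k+1}$ is the transfer function of a well-posed input--output map $\mathfrak N$, since $\Delta_{k+1}$ is infinite-time admissible by Lemma~\ref{lem:nk-delta-admissible} and $B_k$ is infinite-time admissible. Theorem~\ref{thm:nk-newton-symbol} gives $\mathbf N_{k+1}^{-1}\in H^\infty$, so the $L^2$ multiplier theorem yields a bounded causal inverse $\mathfrak N^{-1}$ of uniform norm at most $2$. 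We then define the inverse system by
\[
v_k=\mathfrak N^{-1}\bigl(v_{k+1}-\mathcal C_{\Delta,k+1,\infty}x_0\bigr),
\]
and feed $v_k$ into $\Sigma_k$. This produces state, input and output maps built from bounded operators composed with $\mathfrak N^{-1}$, all bounded uniformly in $T$. That gives infinite-time well-posedness, and Staffans' Thm.~6.6.15 identifies the result with the flow inverse, whose generator has resolvent $R(\lambda)$.

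For strong stability, the zero-input trajectory of the new system is a trajectory of $\Sigma_k$ with $v_k\in L^2$. Since $\Sigma_k$ is infinite-time well posed and $\mathcal T_k$ is strongly stable, the state tends to zero: the term $\mathcal T_k(t)x_0\to0$, and the input term $\mathcal B_{k,t}$ applied to $L^2$ functions tends to zero by the standard tail-truncation argument using strong stability. Admissibility of $\Theta_{k+1}$ then follows because $\vartheta_{k+1}=v_k$ when $v_{k+1}=0$, and $v_k$ is an $L^2$ output of the infinite-time well-posed flow inverse.
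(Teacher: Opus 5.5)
\textbf{Gap.} The gap sits exactly at the step you call the main obstacle. Your pseudo-resolvent, injectivity and dense-range arguments show only that $R(\lambda)$ is the resolvent of a closed, densely defined operator $A_{k+1}$. Staffans' criterion \cite[Thm.~6.6.11]{Staffans2005} also requires that $-M_{11}^{-1}$ generate a $C_0$-semigroup, and you never establish this.

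Your time-domain construction does not close the gap. Defining $v_k=\mathfrak Q_{k+1}(v_{k+1}-\mathcal C_{\Delta,k+1,\infty}x_0)$ and feeding it into $\Sigma_k$ produces operator families that are bounded uniformly in $T$. Uniform bounds, however, do not make these families a well-posed linear system. You would still have to prove two things about the candidate family $\mathcal T_{k+1}(t)x=\mathcal T_k(t)x+\mathcal B_{k,t}(r_x|_{[0,t]})$, where $r_x=-\mathfrak Q_{k+1}\mathcal C_{\Delta,k+1,\infty}x$:
\begin{itemize}
\item it satisfies the semigroup law $\mathcal T_{k+1}(t+s)=\mathcal T_{k+1}(t)\mathcal T_{k+1}(s)$;
\item its generator is the operator whose resolvent is $R(\lambda)$.
\end{itemize}
Invoking \cite[Thm.~6.6.15]{Staffans2005} to obtain these facts is circular. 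The trajectory-interchange theorem presupposes that the port is already flow invertible, that is, that the generation hypothesis of Thm.~6.6.11 holds.

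\textbf{How the paper supplies the missing step.} It computes, via the Laplace formulas for admissible control and observation operators, that $\int_0^\infty e^{-\lambda t}\mathcal T_{k+1}(t)x\,dt=R(\lambda)x$ on $\Cplus$. The pseudo-resolvent identity, which you already have, then shows that $(t,s)\mapsto\mathcal T_{k+1}(t+s)x$ and $(t,s)\mapsto\mathcal T_{k+1}(t)\mathcal T_{k+1}(s)x$ have the same double Laplace transform $R(\lambda)R(\mu)x$. Uniqueness of that transform, together with strong continuity, gives the semigroup law. Only after this (a bounded $C_0$-semigroup whose generator has resolvent \eqref{eq:nk-closed-resolvent}) can you:
\begin{itemize}
\item apply Thm.~6.6.11 at a single point such as $\lambda=\alpha$, where dense range of $M_{11}$ comes for free, so your adjoint argument becomes unnecessary;
\item then use Thm.~6.6.15 and density of classical trajectories to identify your time-domain maps with the flow inverse.
\end{itemize}

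\textbf{Secondary point.} Your claim that $\mathbf N_{k+1}$ is the transfer function of a bounded input--output map $\mathfrak N$ does not follow from infinite-time admissibility of $B_k$ and $\Delta_{k+1}$. Admissible control and observation operators do not by themselves give a bounded input--output map, and \eqref{eq:nk-frequency-green} controls only the real part of $\mathbf N_{k+1}$. You do not need this claim: the multiplier $\mathfrak Q_{k+1}$ with symbol $\mathbf N_{k+1}^{-1}\in H^\infty$ suffices, with the port relation inverted on Laplace transforms of classical trajectories.

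Your strong-stability and $\Theta_{k+1}$-admissibility arguments are correct and match the paper's.
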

\begin{proof}
The proof proceeds in three steps: we first construct the candidate
closed-loop semigroup, then prove its strong stability, and finally identify
it with the flow inverse of the Newton port.

Let $\mathfrak Q_{k+1}$ be the bounded causal time-invariant operator
on $L^2(0,\infty;U)$ whose symbol is $\mathbf N_{k+1}^{-1}$; its existence and
$\|\mathfrak Q_{k+1}\|\leq2$ follow from the multiplier theorem recalled in
Section~\ref{sec:prelim} and Theorem~\ref{thm:nk-newton-symbol}.

For $\lambda\in\Cplus$ denote the right-hand side of
\eqref{eq:nk-closed-resolvent} by $M_\lambda$.  The usual resolvent identity,
combined with \eqref{eq:nk-port-resolvent-identity} and the identity for
inverses, gives
\begin{equation}
M_\lambda-M_\mu
=(\mu-\lambda)M_\lambda M_\mu,
\qquad \lambda,\mu\in\Cplus.
\label{eq:nk-pseudo-resolvent}
\end{equation}
For $x\in X$, put
\[
r_x:=
-\mathfrak Q_{k+1}\mathcal C_{\Delta,k+1,\infty}x
\in L^2(0,\infty;U)
\]
and define
\[
\mathcal T_{k+1}(t)x
:=
\mathcal T_k(t)x
+
\mathcal B_{k,t}(r_x|_{[0,t]}).
\]
Since $\Sigma_k$ is infinite-time well posed,
$\mathfrak Q_{k+1}$ is bounded, and
$\Delta_{k+1}$ is infinite-time admissible, this family is uniformly
bounded in $t\geq0$. Moreover,
$t\mapsto\mathcal B_{k,t}(r_x|_{[0,t]})$ is the zero-state trajectory
of $\Sigma_k$ driven by $r_x$ and is therefore continuous in $X$.
Hence $\mathcal T_{k+1}$ is a strongly continuous uniformly bounded
family with $\mathcal T_{k+1}(0)=I$. The Laplace
formulas for admissible control and observation operators
\cite[Prop.~2.3.1 and Secs.~4.3--4.4]{TucsnakWeiss2009} give
\begin{equation}
\int_0^\infty e^{-\lambda t}\mathcal T_{k+1}(t)x\,dt
=M_\lambda x,
\qquad \lambda\in\Cplus.
\label{eq:nk-laplace-M}
\end{equation}
For $\lambda,\mu\in\Cplus$, $\lambda\neq\mu$, the double Laplace
transform of $(t,s)\mapsto\mathcal T_{k+1}(t+s)x$ is
\[
\frac{M_\lambda-M_\mu}{\mu-\lambda}x
=
M_\lambda M_\mu x
\]
by \eqref{eq:nk-pseudo-resolvent}. On the other hand, using
\eqref{eq:nk-laplace-M} first in the $s$-variable and then in the
$t$-variable gives
\[
\begin{aligned}
&\int_0^\infty\int_0^\infty
e^{-\lambda t-\mu s}
\mathcal T_{k+1}(t)\mathcal T_{k+1}(s)x\,ds\,dt
\\
&\qquad=
\int_0^\infty
e^{-\lambda t}\mathcal T_{k+1}(t)
\left(
\int_0^\infty e^{-\mu s}\mathcal T_{k+1}(s)x\,ds
\right)\,dt
\\
&\qquad=
\int_0^\infty
e^{-\lambda t}\mathcal T_{k+1}(t)M_\mu x\,dt
=
M_\lambda M_\mu x.
\end{aligned}
\]
Thus the two functions
\[
(t,s)\mapsto\mathcal T_{k+1}(t+s)x
\qquad\text{and}\qquad
(t,s)\mapsto\mathcal T_{k+1}(t)\mathcal T_{k+1}(s)x
\]
have the same double Laplace transform. After pairing their difference
with an arbitrary element of $X$, scalar uniqueness of the double
Laplace transform gives equality almost everywhere in $(t,s)$.
Since both functions are strongly continuous, the equality holds for
all $t,s\geq0$. Hence
\[
\mathcal T_{k+1}(t+s)
=
\mathcal T_{k+1}(t)\mathcal T_{k+1}(s),
\qquad t,s\geq0.
\]
Hence
$\mathcal T_{k+1}$ is a bounded $C_0$-semigroup, and
\eqref{eq:nk-laplace-M} yields \eqref{eq:nk-closed-resolvent} for its
generator $A_{k+1}$.

The new semigroup is strongly stable. Indeed, by the definition above,
$\mathcal T_{k+1}(t)x=\mathcal T_k(t)x+
\mathcal B_{k,t}(r_x|_{[0,t]})$.  For $t>S$, the concatenation identity gives
\[
\mathcal B_{k,t}(r_x|_{[0,t]})
=
\mathcal T_k(t-S)\mathcal B_{k,S}(r_x|_{[0,S]})
+
\mathcal B_{k,t-S}(r_x(S+\cdot)|_{[0,t-S]}).
\]
The first term tends to zero for fixed $S$ by strong stability of
$\mathcal T_k$, whereas the second is bounded by
$\sup_T\|\mathcal B_{k,T}\|\,\|r_x\|_{L^2(S,\infty;U)}$.  Letting first
$t\to\infty$ and then $S\to\infty$ proves
$\mathcal T_{k+1}(t)x\to0$.

It remains to identify this semigroup with the feedback relation generated by the Newton step.  Since
$\Gamma_{k+1}\&\Lambda_{k+1}\in\mathcal L(\dom(A\&B),U)$, appending it as an artificial
output to the current state equation gives a system node.  At the normalization point $\alpha$, the resolvent map in Staffans'
flow-inversion criterion is solved, for $f\in X$ and $g\in U$, by
\[
\begin{aligned}
v_k&=-\mathbf N_{k+1}(\alpha)^{-1}
\bigl(g+\Delta_{k+1}(\alpha-A)^{-1}f\bigr),\\
x&=(\alpha-A)^{-1}f
+(\alpha-A_{-1})^{-1}Bv_k.
\end{aligned}
\]
It is therefore bijective, and the upper-left block of its inverse is
$M_{11}=(\alpha-A_{k+1})^{-1}$.
Thus $M_{11}$ is injective with dense range and
$-M_{11}^{-1}=A_{k+1}-\alpha I$
generates a $C_0$-semigroup.  Staffans' flow-inversion criterion
\cite[Thm.~6.6.11]{Staffans2005} now shows that the auxiliary node is flow
invertible and that its flow inverse is a system node.

We next identify its finite-time maps.  First consider classical
trajectories of the flow inverse with compatible smooth data.  By the
trajectory-interchange theorem
\cite[Thm.~6.6.15]{Staffans2005}, interchanging $v_k$ and $v_{k+1}$
produces a classical trajectory of the auxiliary node.  The Laplace-domain
representation of system-node trajectories therefore gives
\[
\widehat v_{k+1}(\lambda)
=
\Delta_{k+1}(\lambda-A)^{-1}x_0
+
\mathbf N_{k+1}(\lambda)\widehat v_k(\lambda).
\]
Since $\mathbf N_{k+1}^{-1}$ is the symbol of
$\mathfrak Q_{k+1}$, causality implies on every finite interval
\[
v_k
=
\mathfrak Q_{k+1,T}
\bigl(v_{k+1}-\mathcal C_{\Delta,k+1,T}x_0\bigr).
\]
Substitution into the finite-time maps of $\Sigma_k$ gives
\begin{equation}
\begin{bmatrix}
x(T)\\ z\\ v_k
\end{bmatrix}
=
\begin{bmatrix}
\mathcal T_k(T)-\mathcal B_{k,T}\mathfrak Q_{k+1,T}
\mathcal C_{\Delta,k+1,T}
&
\mathcal B_{k,T}\mathfrak Q_{k+1,T}
\\
\mathcal C_{k,T}-\mathcal D_{k,T}\mathfrak Q_{k+1,T}
\mathcal C_{\Delta,k+1,T}
&
\mathcal D_{k,T}\mathfrak Q_{k+1,T}
\\
-\mathfrak Q_{k+1,T}\mathcal C_{\Delta,k+1,T}
&
\mathfrak Q_{k+1,T}
\end{bmatrix}
\pvek{x_0}{v_{k+1}}.
\label{eq:nk-inverse-state-maps}
\end{equation}

Compatible classical data are dense in the generalized-trajectory topology;
see \cite[Secs.~4.6--4.7]{Staffans2005}.  Since every block in
\eqref{eq:nk-inverse-state-maps} is bounded, the identity extends by density
to arbitrary initial states in $X$ and inputs in $L^2(0,T;U)$.  Thus these
are precisely the finite-time maps of the flow inverse.

Moreover, their norms are bounded uniformly in $T$, because $\Sigma_k$ is
infinite-time well posed, $\Delta_{k+1}$ is infinite-time admissible, and
\[
\|\mathfrak Q_{k+1,T}\|
\leq
\|\mathfrak Q_{k+1}\|
\leq2.
\]
Hence the flow inverse is infinite-time well posed.  For zero new input,
\[
v_k
=
-\mathfrak Q_{k+1}
\mathcal C_{\Delta,k+1,\infty}x_0.
\]
Since $\vartheta_{k+1}=v_k-v_{k+1}$, this proves infinite-time
admissibility of the zero-input correction observation
$\Theta_{k+1}$. Together with the strong stability proved above, this shows
that the feedback associated with $\Sigma_{k+1}$ is stabilizing.
\end{proof}

Theorem~\ref{thm:nk-flow-invertible} closes the induction in
Section~\ref{sec:nk}.  Its resolvent formula is exactly the Schur-complement
update \eqref{eq:nk-computational-resolvent-update} used in the ADI
realization.
\section{Convergence}
\label{sec:nk-convergence}

The feasibility result of Section~\ref{sec:nk-wellposed} and the decrement
identity of Proposition~\ref{prop:nk-decrement} show that every stabilizing
initialization generates an infinite sequence of stabilizing feedbacks.  We
first show that the feedback cost operators decrease monotonically and that $P_kx$
converges for all $x\in X$, without any restriction on the input space.
We then identify the limit of the resolvent-normalized iteration with the
optimal value operator, assuming finite-cost detectability and
$\dim U<\infty$. Pointwise convergence $P_kx\to Px$ for all $x\in X$
is usually called strong operator convergence.

\subsection{Monotone convergence}

\begin{theorem}[Monotone convergence of the feedback cost operators]
\label{thm:nk-monotone-convergence}
Let the resolvent-normalized iteration be initialized by a stabilizing
feedback. Then
\begin{equation}
P_\star\leq P_{k+1}\leq P_k\leq P_1,
\qquad k\geq1.
\label{eq:nk-order}
\end{equation}
There exists a bounded nonnegative self-adjoint operator
$P_\infty\in\mathcal L(X)$ such that
\begin{equation}
P_kx\rightarrow P_\infty x,
\qquad x\in X.
\label{eq:nk-strong-limit}
\end{equation}
\end{theorem}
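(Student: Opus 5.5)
The plan is to prove the three parts of \eqref{eq:nk-order} separately and then obtain \eqref{eq:nk-strong-limit} from a monotone-convergence argument.

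\textbf{Monotonicity.} By Theorem~\ref{thm:nk-flow-invertible}, induction from the stabilizing initialization $\Sigma_0$ shows that every $\Sigma_k$ is stabilizing. Hence every $P_{k+1}=\mathcal C_{k,\infty}^*\mathcal C_{k,\infty}$ is well defined, bounded, self-adjoint and nonnegative. For $k\geq1$, Proposition~\ref{prop:nk-decrement} gives $P_{k+1}\leq P_k$. Iterating yields $P_{k+1}\leq P_k\leq P_1$.

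\textbf{Lower bound.} Fix $x_0\in X$. The zero-input trajectory of $\Sigma_k$ is an infinite-time generalized trajectory of the original node. Its physical input and output lie in $L^2$, and its cost is $\|\mathcal C_{k,\infty}x_0\|^2=\langle P_{k+1}x_0,x_0\rangle$. The input and output are recovered from the performance output $z_k=(y,u)$, and this trajectory is generated by the original state equation, since $\Sigma_k$ arises from admissible feedback and flow inversions that leave the state equation unchanged. Because $V(x_0)$ is the infimum over all such trajectories, $\langle P_\star x_0,x_0\rangle=V(x_0)\leq\langle P_{k+1}x_0,x_0\rangle$. Since $x_0$ is arbitrary, $P_\star\leq P_{k+1}$.

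The one point needing care is that generalized trajectories of the closed-loop node, after interchanging ports as in \cite[Thm.~6.6.15]{Staffans2005} and through the feedback construction of \cite[Sec.~7.3]{Staffans2005}, really are generalized trajectories of $\mathcal S$. I would justify this for classical trajectories first and then pass to the limit in the graph topology. If $\mathcal S$ is not well posed, I would instead appeal to the definition of $V$ in \cite[Sec.~3]{Opmeer_Staffans}, which is an infimum over trajectories with finite cost. This identification is the main obstacle; the remaining steps are standard.

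\textbf{Strong limit.} The sequence $(P_k)_{k\geq1}$ is decreasing and bounded below by $P_\star\geq0$. Set $R_k:=P_k-P_\star\geq0$; it is decreasing and bounded. By the classical monotone convergence theorem for self-adjoint operators (Vigier), $R_k$ converges strongly to some bounded self-adjoint $R\geq0$. To see why, apply the Cauchy--Schwarz inequality for the nonnegative form $R_k-R_m$ with $m\geq k$:
\[
\|(R_k-R_m)x\|^2\leq\|R_k-R_m\|\,\langle (R_k-R_m)x,x\rangle\leq\|R_1\|\,\langle(R_k-R_m)x,x\rangle .
\]
The right-hand side tends to zero because the scalars $\langle R_kx,x\rangle$ decrease and are bounded below, so $(R_kx)$ is Cauchy. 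Setting $P_\infty=P_\star+R$ gives $P_kx\to P_\infty x$. The limit $P_\infty$ is self-adjoint and satisfies $P_\infty\geq P_\star\geq0$, and its boundedness follows from $\|P_\infty\|\leq\|P_1\|$ or from the uniform boundedness principle.
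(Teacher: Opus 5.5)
Your proposal is correct and follows the paper's argument: the lower bound comes from the admissibility of the zero-input closed-loop trajectory for the original problem, monotonicity comes from the Newton decrement, and the strong limit comes from the Vigier-type estimate $\|Tx\|^2\leq\|T\|\,\langle Tx,x\rangle$ for $T\geq0$. The only cosmetic difference is that you apply this estimate to $R_k-R_m$ to get a Cauchy sequence directly, whereas the paper first builds $P_\infty$ by polarizing the limiting quadratic form and then applies the estimate to $P_k-P_\infty$.
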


\begin{proof}
For $k\geq1$, $P_k$ is the cost operator of the zero-input trajectory of
$\Sigma_{k-1}$.  This trajectory is admissible for the original optimal
control problem, and hence $P_\star\leq P_k$.  Proposition~\ref{prop:nk-decrement}
gives $P_k-P_{k+1}\geq0$, proving \eqref{eq:nk-order}.

For each $x\in X$, the sequence $\langle P_kx,x\rangle$ is decreasing and
bounded from below. Define
\[
q_\infty(x)
=
\lim_{k\to\infty}\langle P_kx,x\rangle.
\]
The quadratic-form identities satisfied by
$\langle P_k\,\cdot,\cdot\rangle$ pass to the pointwise limit, and
\[
0\leq q_\infty(x)\leq\langle P_1x,x\rangle
\leq\|P_1\|\,\|x\|^2.
\]
Hence polarization yields a bounded nonnegative self-adjoint operator
$P_\infty\in\mathcal L(X)$ satisfying
$q_\infty(x)=\langle P_\infty x,x\rangle$ and
$P_\star\leq P_\infty\leq P_k$. Put $D_k=P_k-P_\infty$. Then $D_k\geq0$
and
\[
\|D_kx\|^2
\leq
\|P_1-P_\infty\|\,\langle D_kx,x\rangle
\rightarrow0,
\]
which proves \eqref{eq:nk-strong-limit}.
\end{proof}

\subsection{A dissipation inequality for the limit}
\label{subsec:nk-limit-dissipation}

For the remainder of this section we assume $\dim U<\infty$ and use the
same normalization parameter $\alpha>0$ that was fixed for the Newton
iteration, with $\alpha\in\rho(A)$, where $A$ is the main operator of the
original system node.
The Cayley transform provides fixed bounded coordinates in which all
successive auxiliary inputs can be compared on the same space $X\times U$.
We recall only the identities needed below; see
\cite[Sec.~12.3]{Staffans2005} for the general construction.

For $\spvek{x}{u}\in\dom(A\&B)$ set
\begin{equation}
\xi
=
\frac{\alpha x- A\&B\spvek{x}{u}}{\sqrt{2\alpha}},
\qquad
x^+
=
\frac{\alpha x+A\&B\spvek{x}{u}}{\sqrt{2\alpha}}.
\label{eq:nk-cayley-coordinates}
\end{equation}
Using the augmented performance output $(y,u)$, the internal Cayley
transform has bounded blocks
$\widehat A,\widehat B,\widehat C,\widehat D$ such that
\[
x^+=\widehat A\xi+\widehat Bu,
\qquad
\spvek{y}{u}=\widehat C\xi+\widehat Du.
\]
The second component of $\widehat C\xi$ is zero and the second component
of $\widehat Du$ is $u$. Hence
$\widehat D^*\widehat D\geq I_U$.

\begin{lemma}[Cayley coordinates and auxiliary inputs]
\label{lem:nk-cayley-ports}
Let $\alpha>0$ be the fixed normalization parameter of the
resolvent-normalized iteration, with $\alpha\in\rho(A)$, where
$A$ is the main operator of the original system node.  Then the map
\[
(\xi,u)
\mapsto
\spvek{x}{u},
\qquad
x=
\sqrt{2\alpha}\,(\alpha-A)^{-1}\xi
+
(\alpha-A_{-1})^{-1}Bu,
\]
is a bounded isomorphism from $X\times U$ onto
$\dom(A\&B)$ equipped with its graph norm, with inverse given by
\eqref{eq:nk-cayley-coordinates}.

For every $k\geq0$, the auxiliary input $v_k$, pulled back to the
original system-node domain, is bounded with respect to the graph norm.  Consequently there exist
unique operators
$F_k\in\mathcal L(X,U)$,
$H_k\in\mathcal L(U)$,
such that
\begin{equation}
v_k=F_k\xi+H_ku.
\label{eq:nk-cayley-port}
\end{equation}
If $H_k$ is invertible, then the resolvent lift
\[
\iota_kv
=
\spvek{(\alpha-A_{k,-1})^{-1}B_kv}{v}
\]
corresponds to the fixed Cayley coordinates
$(0,H_k^{-1}v)$, and hence
\begin{equation}
(\Gamma_{k+1}\&\Lambda_{k+1})\iota_k
=
H_{k+1}H_k^{-1}.
\label{eq:nk-cayley-lift}
\end{equation}
By the normalization of the Newton factor, this operator is self-adjoint.
\end{lemma}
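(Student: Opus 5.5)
The proof splits into three parts: the Cayley isomorphism, the representation $v_k=F_k\xi+H_ku$, and the identification of the resolvent lift.

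\textbf{Part one: the Cayley isomorphism.} We use the structure of $\dom(A\&B)$ from \eqref{eq:prelim-node-domain}. Given $(\xi,u)\in X\times U$, define $x$ by the displayed formula.

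Since $(\alpha-A)^{-1}\xi\in\dom(A)$ and $\iota$-type lifts map into $\dom(A\&B)$ by \cite[Lem.~4.7.3]{Staffans2005}, we get $\spvek{x}{u}\in\dom(A\&B)$. A direct computation gives
\[
\alpha x-A\&B\spvek{x}{u}=\sqrt{2\alpha}\,\xi,
\]
because $(\alpha-A_{-1})$ applied to $x$ returns $\sqrt{2\alpha}\,\xi+Bu$. This shows the map is inverted by \eqref{eq:nk-cayley-coordinates}.

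Boundedness in both directions is immediate. One direction uses the boundedness of the lift and of $(\alpha-A)^{-1}$ into $\dom(A)$. The other direction holds because $A\&B$ is bounded on $\dom(A\&B)$ with the graph norm.

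\textbf{Part two: the representation \eqref{eq:nk-cayley-port}.} We argue by induction on $k$.

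Every $\Sigma_k$ is obtained from the original node by admissible feedback (for $k=0$) followed by flow inversions (Theorem~\ref{thm:nk-flow-invertible}). These operations do not change the state equation $\dot x=A\&B\spvek{x}{u}$. Hence $\dom(A_k\&B_k)$ is identified with $\dom(A\&B)$ via $\spvek{x}{v_k}\leftrightarrow\spvek{x}{u}$, where $u$ is the physical input, now an output of $\Sigma_k$.

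The claim is that this identification is a topological isomorphism for the graph norms, and that $v_k$ is a bounded functional on $\dom(A\&B)$.

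\emph{Base case.} $v_0=u-F\&G\spvek{x}{u}$, which is bounded on $\dom(A\&B)$ by the hypothesis on $F\&G$.

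\emph{Inductive step.} $v_{k+1}=\Gamma_{k+1}\&\Lambda_{k+1}\spvek{x}{v_k}$ is bounded on $\dom(A_k\&B_k)$ by Lemma~\ref{lem:nk-port-polarization}. Composing with the (inductively bounded) map $\spvek{x}{u}\mapsto\spvek{x}{v_k}$ gives boundedness on $\dom(A\&B)$.

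The map $\spvek{x}{u}\mapsto\spvek{x}{v_k}$ is bounded into $\dom(A_k\&B_k)$ because $A_k\&B_k\spvek{x}{v_k}=A\&B\spvek{x}{u}$. Its inverse is bounded because $u$ is bounded on $\dom(A_k\&B_k)$, being a component of $Z_k\&W_k$.

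Composing $v_k$ with the isomorphism of part one then yields a bounded map $X\times U\to U$. Splitting it into its two components gives the unique $F_k$ and $H_k$.

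\textbf{Part three: the lift identity \eqref{eq:nk-cayley-lift}.} The element $\iota_kv$ corresponds to a state–input pair with
\[
\alpha x-A_k\&B_k\spvek{x}{v}=0.
\]
Since the state dynamics coincide, this means $\alpha x-A\&B\spvek{x}{u}=0$, i.e.\ $\xi=0$. Then \eqref{eq:nk-cayley-port} gives $v=H_ku$, so $u=H_k^{-1}v$, and the Cayley coordinates of $\iota_kv$ are $(0,H_k^{-1}v)$.

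Applying $\Gamma_{k+1}\&\Lambda_{k+1}$ gives $v_{k+1}$ at these coordinates, which by \eqref{eq:nk-cayley-port} for $k+1$ equals $H_{k+1}H_k^{-1}v$. Self-adjointness then follows from the normalization in Lemma~\ref{lem:nk-port-polarization}.

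\textbf{Main obstacle.} The delicate step is the rigorous identification of the domains $\dom(A_k\&B_k)$ and $\dom(A\&B)$, with equivalent graph norms, across flow inversions. I would take it from \cite[Thm.~6.6.15]{Staffans2005}: flow inversion preserves the combined domain of state, input and output, and the interchange preserves classical trajectories. Norm equivalence then follows from the closed graph theorem.
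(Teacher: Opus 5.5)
Your proposal is correct and follows essentially the same route as the paper. It uses the Cayley parametrization via $(\alpha-A_{-1})x=\sqrt{2\alpha}\,\xi+Bu$, proves graph-norm boundedness of the pulled-back $v_k$ by induction through the initial feedback and the successive flow inversions (relying, as the paper does, on \cite[Thms.~6.6.11 and 6.6.15]{Staffans2005} to identify the domains), and observes that $\iota_kv$ has Cayley state coordinate zero. The only cosmetic difference is how you get the inverse bound for $\spvek{x}{u}\mapsto\spvek{x}{v_k}$: you take it directly from the physical input $u$ being a component of $Z_k\&W_k$, whereas the paper composes step-by-step isomorphisms using that the old auxiliary input is a bounded output of the flow inverse; both are valid.
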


\begin{proof}
Solving the first equation in \eqref{eq:nk-cayley-coordinates} for $x$
gives the displayed formula.  Moreover,
\[
A_{-1}x+Bu
=
\alpha x-\sqrt{2\alpha}\,\xi\in X,
\]
so \eqref{eq:prelim-node-domain} shows that
$\spvek{x}{u}\in\dom(A\&B)$.  The formula and
\eqref{eq:prelim-node-domain} give boundedness into the graph norm, whereas
\eqref{eq:nk-cayley-coordinates} gives boundedness of the inverse.  This is
the standard internal Cayley parametrization; see
\cite[Sec.~12.3]{Staffans2005}.

For $k=0$, boundedness of the auxiliary-input coordinate with respect to
the graph norm is part of the system node associated with the initial
stabilizing feedback.  In one Newton step the flow inversion
maps
\[
\spvek{x}{v_k}
\mapsto
\spvek{x}{v_{k+1}},
\qquad
v_{k+1}=\Gamma_{k+1}\&\Lambda_{k+1}\spvek{x}{v_k}.
\]
This map is bounded between the corresponding system-node domains,
equipped with their graph norms, because
$\Gamma_{k+1}\&\Lambda_{k+1}$ is bounded with respect to the graph norm
and the state derivative is unchanged.
Its inverse is bounded because the old auxiliary input is a bounded output of
the flow-inverted system node. Thus repeated flow inversion gives a
bounded isomorphism back to the original system-node domain; see
\cite[Thms.~6.6.11 and 6.6.15]{Staffans2005}.  Composing the pulled-back
auxiliary-input coordinate with the Cayley isomorphism therefore gives a bounded map
from $X\times U$ to $U$, which has the unique block representation
\eqref{eq:nk-cayley-port}.

Finally, the state component of $\iota_kv$ satisfies
\[
(A_k\&B_k)\iota_kv
=
\alpha(\alpha-A_{k,-1})^{-1}B_kv.
\]
Hence its fixed Cayley state coordinate is zero.  If $H_k$ is invertible,
\eqref{eq:nk-cayley-port} then gives $u=H_k^{-1}v$, and evaluation of the
next auxiliary input yields
\[
(\Gamma_{k+1}\&\Lambda_{k+1})\iota_kv
=
H_{k+1}H_k^{-1}v.
\]
The left-hand side is self-adjoint as an operator on $U$ by the fixed
normalization, and hence so is $H_{k+1}H_k^{-1}$.
\end{proof}

For $P=P^*\in\mathcal L(X)$ define
\begin{equation}
\begin{aligned}
d_P(\xi,u)
:={}&
\langle P(\widehat A\xi+\widehat Bu),
        \widehat A\xi+\widehat Bu\rangle
-\langle P\xi,\xi\rangle
+\|\widehat C\xi+\widehat Du\|^2.
\end{aligned}
\label{eq:nk-cayley-form}
\end{equation}
A direct calculation using \eqref{eq:nk-cayley-coordinates} shows that
\begin{equation}
d_P(\xi,u)
=
2\Ree\left\langle
Px,A\&B\spvek{x}{u}
\right\rangle
+
\left\|C\&D\spvek{x}{u}\right\|^2
+\|u\|^2.
\label{eq:nk-cayley-continuous-form}
\end{equation}
Pulling \eqref{eq:nk-linearized-lure} back to these coordinates and using
\eqref{eq:nk-cayley-port} gives
\begin{equation}
\begin{aligned}
d_{P_{k+1}}(\xi,u)
={}&
2\Ree\langle F_{k+1}\xi+H_{k+1}u,
              F_k\xi+H_ku\rangle
-\|F_k\xi+H_ku\|^2
\\
={}&
\|F_{k+1}\xi+H_{k+1}u\|^2
-
\|(F_{k+1}-F_k)\xi+(H_{k+1}-H_k)u\|^2 .
\end{aligned}
\label{eq:nk-cayley-newton}
\end{equation}
The next lemma is the only point where finite dimensionality of the input
space is used.

\begin{lemma}[Vanishing auxiliary-input increments]
\label{lem:nk-port-increments}
Assume $\dim U<\infty$, and let $F_k$ and $H_k$ be the operators from
Lemma~\ref{lem:nk-cayley-ports}.  Then, for all $\xi\in X$ and $u\in U$,
\begin{equation}
(F_{k+1}-F_k)\xi+(H_{k+1}-H_k)u
\rightarrow0.
\label{eq:nk-port-increments}
\end{equation}
\end{lemma}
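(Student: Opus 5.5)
The plan is to reduce \eqref{eq:nk-port-increments} to two facts. First, the quadratic forms $d_{P_k}$ converge, because $P_k\to P_\infty$ strongly by Theorem~\ref{thm:nk-monotone-convergence}. Second, the Newton relation \eqref{eq:nk-cayley-newton}, together with the self-adjointness \eqref{eq:nk-cayley-lift}, then forces the increments to vanish. Write $\vartheta_{k+1}(\xi,u):=(F_k-F_{k+1})\xi+(H_k-H_{k+1})u$, which by \eqref{eq:nk-cayley-port} is exactly the correction output in fixed Cayley coordinates.

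\emph{Step 1: convergence of the forms.} Since $\widehat A,\widehat B,\widehat C,\widehat D$ are bounded and fixed, \eqref{eq:nk-cayley-form} gives
\[
d_{P_k}(\xi,u)-d_{P_{k+1}}(\xi,u)
=\langle (P_k-P_{k+1})x^+,x^+\rangle-\langle (P_k-P_{k+1})\xi,\xi\rangle\to0
\]
for every fixed $(\xi,u)$. Because $\dim U<\infty$, the restriction to the input slice $\xi=0$ converges in operator norm. Let $E^{(k)}\in\mathcal L(U)$ denote the operator of the form $u\mapsto d_{P_{k}}(0,u)$. Then $E^{(k)}\to E^{(\infty)}$ in norm. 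Moreover $E^{(k)}\geq \widehat D^*\widehat D\geq I_U$, since $P_k\geq0$.

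\emph{Step 2: the input block.} Setting $\xi=0$ in \eqref{eq:nk-cayley-newton} gives the operator identity
\[
H_k^*H_{k+1}+H_{k+1}^*H_k-H_k^*H_k=E^{(k+1)}.
\]
I first show inductively that every $H_k$ is invertible. For $k=0$ this is part of the initialization, or follows as in Lemma~\ref{lem:nk-cayley-ports}. If $H_k$ is invertible, then $R_k:=H_{k+1}H_k^{-1}$ is self-adjoint by \eqref{eq:nk-cayley-lift}, and the identity becomes $H_k^*(2R_k-I)H_k=E^{(k+1)}\geq I$. Hence $2R_k-I>0$ and $R_k$ is invertible, and so is $H_{k+1}=R_kH_k$.

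In the variables $G_k:=H_k^*H_k$ the recursion reads
\[
G_{k+1}=H_k^*R_k^2H_k,\qquad H_k^*(2R_k-I)H_k=E^{(k+1)},
\]
so that
\[
G_{k+1}-E^{(k+1)}=H_k^*(R_k-I)^2H_k\geq0,\qquad
G_k-G_{k+1}=E^{(k+1)}-2E^{(k+1)}+\dots
\]
More precisely, $G_k-E^{(k+1)}-(G_{k+1}-E^{(k+1)})=H_k^*\bigl(I-(2R_k-I)-(R_k-I)^2\bigr)H_k$ rewrites to $G_k-G_{k+1}=H_k^*(R_k^2-I)\cdots$. This is the matrix square-root Newton scheme. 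The analogue of the scalar estimate is
\[
G_k-G_{k+1}\geq G_{k+1}-E^{(k+1)}-\bigl(G_k-E^{(k)}\bigr)+(E^{(k)}-E^{(k+1)}).
\]
Using this, I would show that $G_k$ is bounded and that $\sum_k\operatorname{tr}H_k^*(R_k-I)^2H_k<\infty$ up to the summable perturbations coming from $E^{(k)}-E^{(k+1)}\to0$. Finite dimensionality (traces, compactness of bounded sets of matrices) is used exactly here. The conclusion is $(R_k-I)H_k=H_{k+1}-H_k\to0$ in norm.

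\emph{Step 3: the state block.} For fixed $\xi$, polarize \eqref{eq:nk-cayley-newton} between $(\xi,0)$ and $(0,u)$, and use the convergence of the mixed forms from Step 1. This yields
\[
H_k^*(F_{k+1}-F_k)\xi+(H_{k+1}-H_k)^*F_k\xi \;\text{ converging,}
\]
in the form $H_k^*F_{k+1}\xi+H_{k+1}^*F_k\xi-H_k^*F_k\xi=c_{k+1}(\xi)$ with $c_{k+1}(\xi)\to c_\infty(\xi)$. Next, $\|F_k\xi\|$ is bounded uniformly in $k$: the diagonal $\xi$-slice gives $\|F_{k+1}\xi\|^2-\|\vartheta_{k+1}(\xi,0)\|^2=d_{P_{k+1}}(\xi,0)$, which is bounded. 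Combining this bound with $H_{k+1}-H_k\to0$ and the uniform invertibility of $H_k$ from Step 2, I obtain $H_k^*(F_{k+1}-F_k)\xi\to0$, and hence $(F_{k+1}-F_k)\xi\to0$.

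The main obstacle is Step 2: extracting $H_{k+1}-H_k\to0$ from a matrix Newton square-root recursion with a \emph{varying} target $E^{(k+1)}$. Two things are needed there: a uniform lower bound $H_k^*H_k\geq cI$, which I expect from $E^{(k)}\geq I$ and $G_{k+1}\geq E^{(k+1)}$, and the summability argument. If the trace argument fails, I would instead pass to convergent subsequences of the bounded matrices $H_k,R_k$ and show that every limit point satisfies $R=I$. At a limit point, self-adjointness of $R$ and $H^*(2R-I)H=H^*R^2H=E^{(\infty)}$ give $H^*(R-I)^2H=0$, and thus $R=I$.
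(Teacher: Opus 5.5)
Your Step~2, which carries the lemma, has a genuine gap. You only use that $E^{(k)}$ converges, whereas the argument needs $E^{(k)}$ to be \emph{nonincreasing and dominated by} $H_k^*H_k$.

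Since $P_{k+1}\le P_k$, one has $E^{(k+1)}\le E^{(k)}$. The $\xi=0$ identity of the preceding Newton step gives $E^{(k)}=H_k^*H_k-(H_k-H_{k-1})^*(H_k-H_{k-1})\le H_k^*H_k$. Inserting this into $H_k^*(2R_k-I)H_k=E^{(k+1)}\ge I$ yields $\tfrac12 I\le R_k\le I$, hence $H_{k+1}^*H_{k+1}=H_k^*R_k^2H_k\le H_k^*H_k$. Thus $H_k^*H_k$ is nonincreasing and bounded below by $I$, so it converges in norm and $(H_k)$ is bounded. Then $I-R_k^2=H_k^{-*}(H_k^*H_k-H_{k+1}^*H_{k+1})H_k^{-1}\to0$, and consequently $R_k\to I$ and $H_{k+1}-H_k=(R_k-I)H_k\to0$. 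This is the paper's argument.

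Without $R_k\le I$ none of your routes closes:
\begin{itemize}
\item The boundedness of $H_k$ is asserted but never derived.
\item Your ``scalar analogue'' inequality reduces to $(H_k-H_{k-1})^*(H_k-H_{k-1})\ge(H_{k+1}-H_k)^*(H_{k+1}-H_k)$, i.e.\ to nonincreasing increments. This does not follow from the recursion: in the scalar case $|H_{k+1}-H_k|=(|H_k-H_{k-1}|^2+E^{(k)}-E^{(k+1)})/(2|H_k|)$, which exceeds $|H_k-H_{k-1}|$ after a large drop of $E^{(k)}$.
\item The compactness fallback is circular. Because $H_{k+1}^*H_{k+1}=E^{(k+1)}+(H_{k+1}-H_k)^*(H_{k+1}-H_k)$, the relation $H^*R^2H=E^{(\infty)}$ at a limit point is equivalent to the vanishing of the increments along the subsequence, which is what you want to prove.
\end{itemize}
Once $R_k\le I$ is known, your summability idea does work, since $(H_{k+1}-H_k)^*(H_{k+1}-H_k)=H_k^*(I-R_k)^2H_k\le H_k^*(I-R_k^2)H_k=H_k^*H_k-H_{k+1}^*H_{k+1}$ telescopes. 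Incidentally, you do not need invertibility of $H_0$, which you leave unjustified: the displayed identity gives $H_{k+1}^*H_{k+1}\ge E^{(k+1)}\ge I$ for every $k\ge0$.

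Step~3 has a second gap. The uniform bound on $\|F_k\xi\|$ does not follow from the identity you cite, which reads $\|F_{k+1}\xi\|^2=d_{P_{k+1}}(\xi,0)+\|(F_{k+1}-F_k)\xi\|^2$ and therefore presupposes control of the increment. A valid bound comes from feedback evaluation. With $\widehat K_k=-H_k^{-1}F_k$, the relation $d_{P_{k+1}}(\xi,\widehat K_k\xi)=0$ and the fact that the performance output contains $u$ give $\|\widehat K_k\xi\|^2\le\langle P_1\xi,\xi\rangle$. So $F_k=-H_k\widehat K_k$ is uniformly bounded once $(H_k)$ is.

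With that bound, your polarization route is a correct alternative to the paper's. The mixed identity gives $H_k^*F_{k+1}\xi=c_{k+1}(\xi)-(H_{k+1}-H_k)^*F_k\xi\to c_\infty(\xi)$. Comparing two consecutive indices yields $H_k^*(F_{k+2}-F_{k+1})\xi\to0$, hence $(F_{k+2}-F_{k+1})\xi\to0$ because $\|H_k^{-1}\|\le1$. The paper instead bounds $\|H_k(\widehat K_{k+1}-\widehat K_k)\xi\|^2$ by $\langle(P_{k+1}-P_{k+2})\xi,\xi\rangle$ via the Cayley form of the decrement Lyapunov equation. It then concludes from the splitting $(F_{k+1}-F_k)\xi+(H_{k+1}-H_k)u=(H_{k+1}-H_k)(u-\widehat K_{k+1}\xi)+H_k(\widehat K_k-\widehat K_{k+1})\xi$. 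Your variant avoids the decrement equation but needs the same bound on $\widehat K_k$.
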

\begin{proof}
We first control the input blocks $H_k$, then the corresponding Cayley
feedbacks, and finally combine the two increment estimates.

Setting $\xi=0$ in \eqref{eq:nk-cayley-newton} gives
\begin{equation}
\widehat B^*P_{k+1}\widehat B+\widehat D^*\widehat D
=
H_{k+1}^*H_{k+1}
-(H_{k+1}-H_k)^*(H_{k+1}-H_k).
\label{eq:nk-H-difference}
\end{equation}
Since $P_{k+1}\geq0$ and $\widehat D^*\widehat D\geq I_U$,
$H_{k+1}^*H_{k+1}\geq I_U$.  Finite dimensionality of $U$ therefore implies
that $H_{k+1}$ is invertible and $\|H_{k+1}^{-1}\|\leq1$.

For $k\geq1$, $H_k$ is invertible by the preceding estimate. Set
\[
R_k:=H_{k+1}H_k^{-1}.
\]
By \eqref{eq:nk-cayley-lift} and
Lemma~\ref{lem:nk-port-polarization}, $R_k$ is self-adjoint.
Taking the $U$--$U$ part of the first identity in
\eqref{eq:nk-cayley-newton} therefore yields
\[
\widehat B^*P_{k+1}\widehat B+\widehat D^*\widehat D
=
H_k^*(2R_k-I)H_k.
\]
By monotonicity of $P_k$ and \eqref{eq:nk-H-difference} from the preceding
Newton step,
\[
I_U
\leq
\widehat B^*P_{k+1}\widehat B+\widehat D^*\widehat D
\leq
\widehat B^*P_k\widehat B+\widehat D^*\widehat D
\leq H_k^*H_k.
\]
Consequently,
\begin{equation}
\tfrac12I_U
\leq
R_k
\leq I_U,
\qquad k\geq1.
\label{eq:nk-H-ratio}
\end{equation}
Since
\[
H_{k+1}^*H_{k+1}
=
H_k^*R_k^2H_k
\leq H_k^*H_k,
\]
the sequence $H_k^*H_k$ is decreasing and, since $U$ is finite dimensional,
converges in operator norm. Moreover,
\[
I-R_k^2
=
H_k^{-*}
\bigl(H_k^*H_k-H_{k+1}^*H_{k+1}\bigr)
H_k^{-1}
\rightarrow0,
\]
where we also use $\|H_k^{-1}\|\leq1$. By
\eqref{eq:nk-H-ratio},
\[
I-R_k
=
(I-R_k^2)(I+R_k)^{-1},
\]
and hence $R_k\to I$ in operator norm. Since $(H_k)$ is bounded,
\[
H_{k+1}-H_k=(R_k-I)H_k,
\]
and therefore
\begin{equation}
\|H_{k+1}-H_k\|\rightarrow0.
\label{eq:nk-H-increments}
\end{equation}
For $k\geq1$ put
$\widehat K_k=-H_k^{-1}F_k$.
The relation $v_k=0$ is then
$u=\widehat K_k\xi$.  Feedback evaluation, expressed through
\eqref{eq:nk-cayley-form}, gives
$d_{P_{k+1}}(\xi,\widehat K_k\xi)=0$.  Since the performance output contains
the physical input, this implies
\begin{equation}
\|\widehat K_k\xi\|^2
\leq
\|\widehat C\xi+\widehat D\widehat K_k\xi\|^2
\leq
\langle P_{k+1}\xi,\xi\rangle
\leq
\langle P_1\xi,\xi\rangle,
\label{eq:nk-K-bound}
\end{equation}
so
\[
\|\widehat K_k\|\leq\|P_1\|^{1/2},
\qquad k\geq1.
\]
For the feedback defined by $v_{k+1}=0$, one has
$u=\widehat K_{k+1}\xi$, and the correction output is
$H_k(\widehat K_{k+1}-\widehat K_k)\xi$.
Put
\[
D_{k+1}=P_{k+1}-P_{k+2}\geq0
\]
and
\[
x_{k+1}^+
=
\widehat A\xi+\widehat B\widehat K_{k+1}\xi.
\]
Applying the Cayley transformation to the decrement Lyapunov equation
\eqref{eq:nk-decrement-lyap} yields
\[
\langle D_{k+1}x_{k+1}^+,x_{k+1}^+\rangle
-
\langle D_{k+1}\xi,\xi\rangle
+
\|H_k(\widehat K_{k+1}-\widehat K_k)\xi\|^2
=
0.
\]
Since $D_{k+1}\geq0$, it follows that
\[
\|H_k(\widehat K_{k+1}-\widehat K_k)\xi\|^2
\leq
\langle(P_{k+1}-P_{k+2})\xi,\xi\rangle.
\]
The right-hand side tends to zero by
Theorem~\ref{thm:nk-monotone-convergence}. Finally,
\[
(F_{k+1}-F_k)\xi+(H_{k+1}-H_k)u
=
(H_{k+1}-H_k)(u-\widehat K_{k+1}\xi)
+H_k(\widehat K_k-\widehat K_{k+1})\xi,
\]
and \eqref{eq:nk-port-increments} follows from
\eqref{eq:nk-H-increments}, \eqref{eq:nk-K-bound}, and the preceding
estimate.
\end{proof}
\begin{proposition}[Limit dissipation inequality]
\label{prop:nk-limit-dissipation}
Assume $\dim U<\infty$ and consider the resolvent-normalized iteration with
fixed normalization parameter
$\alpha>0$, $\alpha\in\rho(A)$.
Let $P_\infty$ be the limit from
Theorem~\ref{thm:nk-monotone-convergence}.  Then
\begin{equation}
2\Ree\left\langle
P_\infty x,A\&B\spvek{x}{u}
\right\rangle
+
\left\|C\&D\spvek{x}{u}\right\|^2
+
\|u\|^2
\geq0
\label{eq:nk-limit-dissipation}
\end{equation}
for all $\spvek{x}{u}\in\dom(A\&B)$ of the original system node.
\end{proposition}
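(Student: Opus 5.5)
By \eqref{eq:nk-cayley-continuous-form}, the left-hand side of \eqref{eq:nk-limit-dissipation} equals $d_{P_\infty}(\xi,u)$, where $(\xi,u)$ are the Cayley coordinates of $\spvek{x}{u}$. The Cayley parametrization of Lemma~\ref{lem:nk-cayley-ports} is a bounded isomorphism of $X\times U$ onto $\dom(A\&B)$. It therefore suffices to show
\[
d_{P_\infty}(\xi,u)\geq0
\qquad\text{for all }\xi\in X,\ u\in U.
\]
The argument is a limit passage in the Newton identity \eqref{eq:nk-cayley-newton}, carried out entirely in the fixed bounded coordinates. In these coordinates all successive auxiliary inputs live on the same space, so the different system nodes never have to be compared.

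\emph{Step 1 (lower bound at each step).} Fix $(\xi,u)$. By \eqref{eq:nk-cayley-newton},
\[
d_{P_{k+1}}(\xi,u)
\geq
-\|(F_{k+1}-F_k)\xi+(H_{k+1}-H_k)u\|^2 .
\]
Here we simply discard the nonnegative term $\|F_{k+1}\xi+H_{k+1}u\|^2$. By Lemma~\ref{lem:nk-port-increments}, which is where $\dim U<\infty$ enters, the right-hand side tends to zero as $k\to\infty$.

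\emph{Step 2 (convergence of the form).} By \eqref{eq:nk-cayley-form},
\[
d_{P}(\xi,u)
=
\langle P x^+,x^+\rangle-\langle P\xi,\xi\rangle+\|\widehat C\xi+\widehat Du\|^2,
\]
where $x^+=\widehat A\xi+\widehat Bu$ and $\xi$ are fixed vectors independent of $k$. The form is therefore affine in $P$, evaluated at two fixed vectors. Theorem~\ref{thm:nk-monotone-convergence} gives $P_{k+1}\to P_\infty$ strongly, which in particular gives weak convergence of these quadratic forms. Hence
\[
d_{P_{k+1}}(\xi,u)\to d_{P_\infty}(\xi,u).
\]
Combining this with Step~1 yields $d_{P_\infty}(\xi,u)\geq0$.

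\emph{Step 3 (back to the original coordinates).} Applying \eqref{eq:nk-cayley-continuous-form} with $P=P_\infty$ and the bijectivity of the Cayley map gives \eqref{eq:nk-limit-dissipation} for every $\spvek{x}{u}\in\dom(A\&B)$.

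\emph{Main obstacle.} The substantive difficulty is already isolated in Lemma~\ref{lem:nk-port-increments}, namely the convergence of the auxiliary-input increments. Beyond that, care is needed only in justifying \eqref{eq:nk-cayley-newton}. This requires checking that the pullback of \eqref{eq:nk-linearized-lure} through the composed flow inversions really produces the pair $(F_k\xi+H_ku,\;F_{k+1}\xi+H_{k+1}u)$ for the old and new auxiliary inputs. It also requires checking that the form $\mathfrak l_{k,P_{k+1}}$ coincides with $d_{P_{k+1}}$: the state derivative is unchanged under feedback and the performance output is $(y,u)$. Both facts are stated before the proposition, so the proof reduces to the limit argument above.
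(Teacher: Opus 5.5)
Your proposal is correct and follows essentially the same route as the paper: fix Cayley coordinates $(\xi,u)$, use strong convergence $P_k\to P_\infty$ to obtain $d_{P_{k+1}}(\xi,u)\to d_{P_\infty}(\xi,u)$, combine \eqref{eq:nk-cayley-newton} with Lemma~\ref{lem:nk-port-increments}, and return to the original coordinates via \eqref{eq:nk-cayley-continuous-form}. The only cosmetic difference is that you drop the nonnegative term $\|F_{k+1}\xi+H_{k+1}u\|^2$ to get a lower bound, whereas the paper writes $d_{P_\infty}(\xi,u)$ as the limit of these squared norms; the two formulations are equivalent.
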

\begin{proof}
Fix $(\xi,u)\in X\times U$.  Since the four Cayley operators are bounded,
Theorem~\ref{thm:nk-monotone-convergence} gives
$d_{P_{k+1}}(\xi,u)\to d_{P_\infty}(\xi,u)$.  By
\eqref{eq:nk-cayley-newton} and Lemma~\ref{lem:nk-port-increments},
\[
d_{P_\infty}(\xi,u)
=
\lim_{k\to\infty}\|F_{k+1}\xi+H_{k+1}u\|^2
\geq0.
\]
Equation~\eqref{eq:nk-cayley-continuous-form} now gives
\eqref{eq:nk-limit-dissipation}.
\end{proof}

\subsection{Convergence to the optimal value operator}
\label{subsec:nk-optimal-limit}

\begin{theorem}[Convergence to the optimal value operator]
\label{thm:nk-optimal-convergence}
Let the resolvent-normalized iteration be initialized by a stabilizing
feedback, and fix any $\alpha>0$ with
$\alpha\in\rho(A)$, where $A$ is the main operator of the original
system node.  If the original system is finite-cost detectable and
$\dim U<\infty$, then
\begin{equation}
P_kx\rightarrow P_\star x
\qquad\text{for all }x\in X.
\label{eq:nk-convergence-Pstar}
\end{equation}
\end{theorem}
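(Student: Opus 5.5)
We already have $P_\star\le P_\infty$ from Theorem~\ref{thm:nk-monotone-convergence}, together with $P_kx\to P_\infty x$. It therefore suffices to prove the reverse inequality $P_\infty\le P_\star$, i.e.\ $\langle P_\infty x_0,x_0\rangle\le V(x_0)$ for every $x_0\in X$. The plan is to integrate the limit dissipation inequality of Proposition~\ref{prop:nk-limit-dissipation} along an optimal trajectory and use finite-cost detectability to kill the terminal term.

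Fix $x_0\in X$. The state finite future cost condition and the representation $P_\star=\mathfrak C_{\rm o}^*\mathfrak C_{\rm o}$ from Section~\ref{sec:lq} yield an infinite-time generalized trajectory $(x,u,y)$ of the original node with $x(0)=x_0$, $u\in L^2(0,\infty;U)$, $y\in L^2(0,\infty;Y)$ and $J(u,y)=V(x_0)=\langle P_\star x_0,x_0\rangle$.

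\textbf{Step 1 (integrated inequality).} For a classical trajectory on $[0,T]$, $t\mapsto\langle P_\infty x(t),x(t)\rangle$ is $C^1$, and its derivative is $2\Ree\langle P_\infty x,A\&B\spvek{x}{u}\rangle$. Integrating \eqref{eq:nk-limit-dissipation} gives
\[
\langle P_\infty x_0,x_0\rangle
\le
\langle P_\infty x(T),x(T)\rangle
+\int_0^T\|y\|^2+\|u\|^2\,dt .
\]
Every term is continuous in $C([0,T];X)\times L^2(0,T;U)\times L^2(0,T;Y)$, so the inequality passes to generalized trajectories by density, as in the proof of Proposition~\ref{prop:nk-decrement}.

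\textbf{Step 2 (terminal term).} I expect this to be the main obstacle. Finite-cost detectability (Definition~\ref{def:prelim-finite-cost-detectability}) applies to the optimal trajectory, since $u$ and $y$ are in $L^2$, and gives $x(T)\to0$ in $X$. Because $P_\infty$ is bounded, $\langle P_\infty x(T),x(T)\rangle\to0$. Two things need care here:
\begin{itemize}
\item that the optimal pair of \cite{Opmeer_Staffans} really arises from an infinite-time generalized trajectory in the sense of the definition, which requires the original node to be well posed or the definition to be read in the sense of \cite{Opmeer_Staffans};
\item the density argument in Step~1, which should be carried out on $[0,T]$ for each fixed $T$.
\end{itemize}
If the original node is not well posed, I would replace the optimal trajectory by a minimizing sequence of finite-cost trajectories; each of these again has $x(T)\to0$.

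\textbf{Step 3 (conclusion).} Letting $T\to\infty$ gives $\langle P_\infty x_0,x_0\rangle\le J(u,y)=\langle P_\star x_0,x_0\rangle$, hence $P_\infty\le P_\star$. Combined with $P_\star\le P_\infty$ this yields $P_\infty=P_\star$, and \eqref{eq:nk-convergence-Pstar} follows from \eqref{eq:nk-strong-limit}. The limit is independent of $\alpha$, since $\alpha$ enters only through Proposition~\ref{prop:nk-limit-dissipation}.
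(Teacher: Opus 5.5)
Your proposal is correct and follows the paper's proof: integrate the limit dissipation inequality along classical trajectories, pass to generalized finite-cost trajectories by density on each $[0,T]$, remove the terminal term using finite-cost detectability, and conclude $P_\infty\leq P_\star$. The only cosmetic difference is that you integrate along the optimal trajectory supplied by $\mathfrak C_{\rm o}$, whereas the paper takes an arbitrary finite-cost trajectory and then the infimum, which is exactly your fallback via a minimizing sequence.
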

\begin{proof}
By Theorem~\ref{thm:nk-monotone-convergence},
$P_kx\to P_\infty x$ for all $x\in X$, and $P_\star\leq P_\infty$.
It remains to prove the reverse inequality.

Let $(x,u,y)$ be any infinite-time generalized trajectory of the original
system with finite cost. Integrating \eqref{eq:nk-limit-dissipation} along
classical trajectories and then using the approximation from
Section~\ref{subsec:prelim-system-nodes} gives the same inequality for
generalized trajectories: the states converge in $C([0,T];X)$ and the
input and output signals converge in their respective $L^2$ spaces.
Consequently, for every $T>0$,
\[
\int_0^T\bigl(\|y(t)\|^2+\|u(t)\|^2\bigr)\,dt
\geq
\langle P_\infty x(0),x(0)\rangle
-
\langle P_\infty x(T),x(T)\rangle.
\]
By finite-cost detectability, $x(T)\to0$. Letting
$T\to\infty$ therefore yields
\[
J(u,y)\geq\langle P_\infty x(0),x(0)\rangle.
\]
Taking the infimum over all finite-cost trajectories with the same initial
state gives $P_\star\geq P_\infty$.  Hence $P_\infty=P_\star$, and
\eqref{eq:nk-convergence-Pstar} follows from
Theorem~\ref{thm:nk-monotone-convergence}.
\end{proof}

\begin{remark}
Finite dimensionality of $U$ is used only in
Lemma~\ref{lem:nk-port-increments}; feasibility, stability preservation, and
the convergence $P_kx\to P_\infty x$ hold for arbitrary Hilbert input
spaces.  Finite-cost detectability is needed only to identify
$P_\infty=P_\star$.  Hence, under the assumptions of
Theorem~\ref{thm:nk-optimal-convergence}, the limit is independent of the
fixed normalization parameter, although the intermediate feedbacks generally
are not.
\end{remark}

\section{Control of a linearized cylinder wake}
\label{sec:cylinder-wake}

We first illustrate our Newton--Kleinman framework on a linearized cylinder-wake problem with genuinely unbounded boundary control and observation. The example therefore requires a formulation that remains meaningful for unbounded input and output operators already at the continuous PDE level.

We formulate the Oseen problem as a well-posed system node and then present an adaptive Taylor--Hood discretization with one Newton--Kleinman update per spatial level. The numerical results illustrate the convergence behavior of the adaptive
iteration and the resulting closed-loop wake suppression.

\subsection{Model and system-node realization}
\label{subsec:cylinder-model}

We consider the two-dimensional channel
\[
\Omega
=
(0,L)\times(0,H)\setminus\overline{B_R(c)},
\qquad
L=2.2,\quad H=0.41,
\]
containing a circular cylinder with
\[
R=0.05,
\qquad
c=(0.2,0.2),
\qquad
D=2R=0.1.
\]
The boundary is decomposed as
\[
\partial\Omega
=
\overline{\Gamma_{\rm in}}
\cup
\overline{\Gamma_{\rm w}}
\cup
\overline{\Gamma_{\rm out}}
\cup
\overline{\Gamma_{\rm cyl}},
\]
where $\Gamma_{\rm in}$ and $\Gamma_{\rm out}$ denote the inlet and
outlet, $\Gamma_{\rm w}$ consists of the upper and lower channel walls,
and $\Gamma_{\rm cyl}$ is the cylinder boundary. On each smooth boundary
segment, $n=(n_1,n_2)^\top$ denotes the outward unit normal of the fluid
domain, and we fix the unit tangent by
\[
\tau=\spvek{\phantom{-}n_2}{-n_1}.
\]

\begin{figure}[t]
\centering
\IfFileExists{figures/cylinder_wake_setup.pdf}{
  \begin{overpic}[width=.94\textwidth]{figures/cylinder_wake_setup.pdf}
    \put(46,61){\color{red}\Large $u_1$}
    \put(46,37){\color{blue}\Large $u_2$}
  \end{overpic}
}{
  \IfFileExists{cylwake_initial.png}{
    \begin{overpic}[width=.94\textwidth]{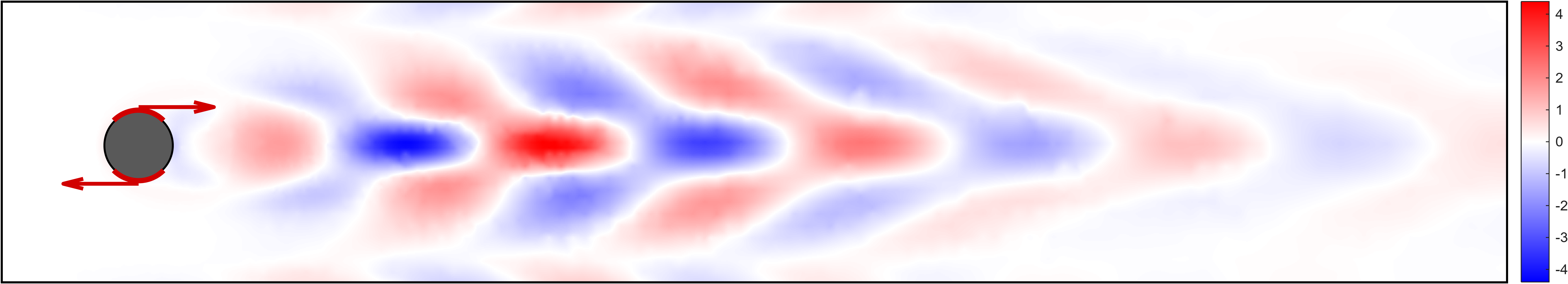}
      \put(10,12.5){\color{red}$u_1$}
      \put(4,4){\color{red}$u_2$}
    \end{overpic}
  }{
    \fbox{
      \parbox[c][4.8cm][c]{.9\textwidth}{
        \centering
        \textbf{Cylinder-wake setup}\\[0.5em]
        Placeholder for the channel geometry, stationary base flow,
        tangential cylinder actuator, and the two wake-output regions.
      }
    }
  }
}
\caption{Cylinder-wake configuration. The scalar control acts through a
localized tangential traction on the cylinder, while the two performance
outputs measure vorticity moments in the downstream wake.}
\label{fig:cylinder-setup}
\end{figure}

The mean inflow velocity is normalized to one. Throughout the numerical
experiment we use
\[
\mathrm{Re}_D
=
\tfrac{D}{\nu}
=
50,
\qquad
\nu=2\cdot10^{-3}.
\]
Let $(\bar v,\bar p)$ denote the corresponding stationary
Navier--Stokes solution. With
\[
\varepsilon(v)
=
\tfrac12\bigl(\nabla v+\nabla v^\top\bigr),
\qquad
\sigma(v,p)
=
2\nu\varepsilon(v)-pI,
\]
the stationary equations are
\[
\begin{aligned}
-\operatorname{div}\sigma(\bar v,\bar p)
+
(\bar v\cdot\nabla)\bar v
&=0
&&\text{in }\Omega,
\\
\operatorname{div}\bar v
&=0
&&\text{in }\Omega.
\end{aligned}
\]
At the inlet we prescribe the parabolic velocity profile with unit mean
velocity,
\[
\bar v
=
\tfrac{6x_2(H-x_2)}{H^2}e_1
\qquad\text{on }\Gamma_{\rm in},
\]
and impose
$\bar v=0$ on $\Gamma_{\rm w}$.
At the outlet we use the natural stress condition
\[
\sigma(\bar v,\bar p)n=0
\qquad\text{on }\Gamma_{\rm out}.
\]
The cylinder is impermeable and satisfies the tangential Navier condition
\[
\bar v\cdot n=0,
\qquad
\bigl(\sigma(\bar v,\bar p)n\bigr)\cdot\tau
+
\beta_{\rm R}\,\bar v\cdot\tau
=
0
\qquad\text{on }\Gamma_{\rm cyl},
\]
where
\[
\ell_{\rm slip}=0.02D,
\qquad
\beta_{\rm R}
=
\tfrac{\nu}{\ell_{\rm slip}}
=
1.
\]
The controlled dynamics are obtained by linearization about the stationary
state $(\bar v,\bar p)$. Denoting the velocity and pressure perturbations by
$v$ and $p$, respectively, we obtain
\begin{equation}
\begin{aligned}
\dot v
-
\operatorname{div}\sigma(v,p)
+
(\bar v\cdot\nabla)v
+
(v\cdot\nabla)\bar v
&=0
&&\text{in }\Omega,
\\
\operatorname{div}v
&=0
&&\text{in }\Omega.
\end{aligned}
\label{eq:cylinder-linearized}
\end{equation}
The perturbation satisfies
\[
\begin{aligned}
v&=0
&&\text{on }\Gamma_{\rm in}\cup\Gamma_{\rm w},
\\
\sigma(v,p)n&=0
&&\text{on }\Gamma_{\rm out},
\\
v\cdot n&=0
&&\text{on }\Gamma_{\rm cyl}.
\end{aligned}
\]
The control is a scalar tangential traction on the cylinder. We parametrize
$\Gamma_{\rm cyl}$ by the polar angle $\theta\in(-\pi,\pi]$. Since the
outward normal of the fluid domain points into the cylinder,
\[
n=-\spvek{\cos\theta}{\sin\theta},
\qquad
\tau=\spvek{-\sin\theta}{\phantom{-}\cos\theta}
\qquad\text{on }\Gamma_{\rm cyl}.
\]
We define the raised-cosine patch
\[
\psi(\theta;\theta_0)
=
\begin{cases}
\displaystyle
\tfrac12\left(
1+\cos\bigl(4\,d(\theta,\theta_0)\bigr)
\right),
&
|d(\theta,\theta_0)|\leq\frac{\pi}{4},
\\[1ex]
0,
&
|d(\theta,\theta_0)|>\frac{\pi}{4},
\end{cases}
\]
where
\[
d(\theta,\theta_0)
=
\operatorname{atan2}
\bigl(
\sin(\theta-\theta_0),
\cos(\theta-\theta_0)
\bigr)
\]
is the signed periodic angular distance. The actuator profile is
\[
g_{\rm u}(\theta)
=
\sqrt{\tfrac{8}{3\pi R}}
\left[
\psi(\theta;\tfrac{\pi}{2})
+
\psi(\theta;-\tfrac{\pi}{2})
\right].
\]
Thus the scalar actuator consists of two equal localized tangential
traction patches on the upper and lower parts of the cylinder. Since the
two supports are disjoint,
\[
\|g_{\rm u}\|_{L^2(\Gamma_{\rm cyl})}=1.
\]
For a physical traction amplitude $u\in\mathbb C$, we impose
\[
\bigl(\sigma(v,p)n\bigr)\cdot\tau
+
\beta_{\rm R}\,v\cdot\tau
=
g_{\rm u}u
\qquad\text{on }\Gamma_{\rm cyl}.
\]
The performance output consists of two vorticity moments,
\[
y_j
=
C_jv
:=
\int_\Omega q_j\,\operatorname{curl}v\,dx,
\qquad
j=1,2,
\]
The weight functions $q_1,q_2$ are constructed once from the dominant
open-loop wake eigenmode on the initial spatial mesh. Let $u_{\rm w}$ denote
the velocity component of this complex eigenmode, and let $S_h$ denote the
continuous scalar quadratic finite-element space on the initial mesh. We
compute the $L^2(\Omega)$-projections
$\widetilde q_R,\widetilde q_I\in S_h$ of
$\operatorname{curl}(\Ree u_{\rm w})$ and
$\operatorname{curl}(\operatorname{Im}u_{\rm w})$,
respectively. In the present computation these two functions are linearly
independent. The functions $q_1,q_2$ are then chosen as an
$L^2(\Omega)$-orthonormal basis of
$\operatorname{span}\{\widetilde q_R,\widetilde q_I\}$.
Thus
\[
\operatorname{span}\{q_1,q_2\}
=
\operatorname{span}\{\widetilde q_R,\widetilde q_I\},
\qquad
\langle q_i,q_j\rangle_{L^2(\Omega)}=\delta_{ij}.
\]
After this construction, $q_1$ and $q_2$ are kept fixed: on subsequently
refined meshes they are transferred to the new scalar quadratic
finite-element space and are not recomputed from updated flow or spectral
data.

For the two weight functions used in the computation, the traces on the
cylinder are nonzero. In the final reproduction run we obtain
\[
\|\gamma q_1\|_{L^2(\Gamma_{\rm cyl})}
=
4.518437548099\cdot10^{-1},
\qquad
\|\gamma q_2\|_{L^2(\Gamma_{\rm cyl})}
=
1.749981765199\cdot10^{-1}.
\]
Thus both outputs satisfy the nonvanishing trace condition used below to
establish that the corresponding observation operators are genuinely
unbounded on the kinetic-energy state space.

We write
$y
=
\spvek{y_1}{y_2}
\in Y:=\mathbb C^2$.
The physical LQ cost is
\[
J(u,y)
=
\int_0^\infty
\left(
\|y(t)\|_{\mathbb C^2}^2
+
r_{\rm u}|u(t)|^2
\right)\,dt,
\qquad
r_{\rm u}
=
1.871174597508\cdot10^5.
\]
We introduce the scaled input
$w=\sqrt{r_{\rm u}}\,u$
and take $U:=\mathbb C$. Then
\[
J(w,y)
=
\int_0^\infty
\left(
\|y(t)\|_{\mathbb C^2}^2
+
|w(t)|^2
\right)\,dt,
\]
while the boundary condition becomes
\[
\bigl(\sigma(v,p)n\bigr)\cdot\tau
+
\beta_{\rm R}\,v\cdot\tau
=
r_{\rm u}^{-1/2}g_{\rm u}w
\qquad\text{on }\Gamma_{\rm cyl}.
\]
We use the kinetic-energy state space
\[
V_\sigma
=
\left\{
v\in H^1(\Omega;\mathbb C^2)
\;\middle|\;
\begin{array}{l}
\operatorname{div}v=0,\\
\gamma v=0
\text{ on }\Gamma_{\rm in}\cup\Gamma_{\rm w},\\
\gamma v\cdot n=0
\text{ on }\Gamma_{\rm cyl}
\end{array}
\right\},
\qquad
X
=
\overline{V_\sigma}^{\,L^2(\Omega;\mathbb C^2)}.
\]

For $\spvek vw\in X\times U$, we put
$\spvek vw\in\dom(A\&B)$ if $v\in V_\sigma$ and there exists
$p\in L^2(\Omega)$ such that
\[
\sigma(v,p)
\in
H(\operatorname{div},\Omega;\mathbb C^{2\times2}),
\qquad f
:=
\operatorname{div}\sigma(v,p)
-
(\bar v\cdot\nabla)v
-
(v\cdot\nabla)\bar v
\in X,
\]
and
$\sigma(v,p)n=0$ on $\Gamma_{\rm out}$,
together with
\[
\bigl(\sigma(v,p)n\bigr)\cdot\tau
+
\beta_{\rm R}\,v\cdot\tau
=
r_{\rm u}^{-1/2}g_{\rm u}w
\qquad\text{on }\Gamma_{\rm cyl},
\]
in the corresponding weak trace sense. We define
\begin{equation}
A\&B\spvek vw
=
f,
\qquad
C\&D\spvek vw
=
\spvek{C_1v}{C_2v}.
\label{eq:cylinder-node}
\end{equation}
Hence
\[
\mathcal S_{\rm cw}
=
\begin{bmatrix}
A\&B\\
C\&D
\end{bmatrix}
:
\dom(A\&B)
\subset
X\times\mathbb C
\rightarrow
X\times\mathbb C^2.
\]

\begin{theorem}[System-node realization]
\label{thm:cylinder-analytic-node}
Assume that
$\bar v\in W^{1,\infty}(\Omega;\mathbb R^2)$,
let $g_{\rm u}\in L^2(\Gamma_{\rm cyl})$, and let
$q_1,q_2\in H^1(\Omega)$ be real valued.

Then $\mathcal S_{\rm cw}$ defined in \eqref{eq:cylinder-node} is a
well-posed system node on $(X,\mathbb C,\mathbb C^2)$. Its main operator
$A$ generates an analytic $C_0$-semigroup on $X$ and has compact
resolvent.

If $g_{\rm u}\neq0$, then the boundary control is genuinely unbounded,
$B\in\mathcal L(\mathbb C,X_{-1})$,
$B\notin\mathcal L(\mathbb C,X)$.

Assume, in addition, that
$\gamma q_j|_{\Gamma_{\rm cyl}}\not\equiv0$,
$j=1,2$.
Then both wake observations are genuinely unbounded on the kinetic-energy
state space,
\[
C_j\in\mathcal L(\dom(A),\mathbb C),
\qquad
C_j\notin\mathcal L(X,\mathbb C),
\qquad
j=1,2.
\]
\end{theorem}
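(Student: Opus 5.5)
The plan is to build the system node from a sesquilinear form on $V_\sigma$. Well-posedness will come from analyticity together with the mapping properties of the boundary control and observation operators in fractional-power scales.

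\emph{Step 1 (main operator).} On $V_\sigma$ we define
\[
a(v,\phi)=2\nu\langle\varepsilon(v),\varepsilon(\phi)\rangle+\langle(\bar v\cdot\nabla)v+(v\cdot\nabla)\bar v,\phi\rangle+\beta_{\rm R}\langle v\cdot\tau,\phi\cdot\tau\rangle_{\Gamma_{\rm cyl}}.
\]
The Dirichlet condition on $\Gamma_{\rm in}\cup\Gamma_{\rm w}$ makes Korn's inequality available. Since $\bar v\in W^{1,\infty}$, the convective terms are bounded by $\|v\|_{H^1}\|\phi\|_{L^2}$, and the boundary term is controlled by the trace theorem. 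Hence $a$ is bounded and satisfies a G\aa rding inequality, $\Ree a(v,v)+\omega\|v\|^2\ge c\|v\|_{H^1}^2$. By the Kato/Lions representation theorem, $-A$ associated with $a$ is m-sectorial up to a shift, so $A$ generates an analytic semigroup. Compactness of the resolvent follows from $\dom(A)\subset V_\sigma\hookrightarrow X$ compactly (Rellich on the bounded Lipschitz domain).

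The main technical point is to identify $\dom(A)$, and more generally $\dom(A\&B)$, with the strong description in \eqref{eq:cylinder-node}. This requires recovering the pressure $p\in L^2$ via de Rham/Ne\v{c}as, which is possible because the outlet Neumann part makes the pressure unique rather than defined only up to constants. It also requires interpreting $\sigma n$ in $H^{-1/2}$ through the $H(\operatorname{div})$ Green formula, taking care that only the tangential component on $\Gamma_{\rm cyl}$ is tested since $v\cdot n=0$ there. I expect this to be the main obstacle.

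\emph{Step 2 (control and system node).} We set $\langle Bw,\phi\rangle=r_{\rm u}^{-1/2}w\int_{\Gamma_{\rm cyl}}g_{\rm u}\,\phi\cdot\tau$ for $\phi\in V_\sigma$. This gives $B\in\mathcal L(\mathbb C,V_\sigma')\subset\mathcal L(\mathbb C,X_{-1/2})$, where $X_{-1/2}$ is defined using the form domain. Green's formula shows that $\dom(A\&B)=\{A_{-1}v+Bw\in X\}$ coincides with the strong description, so \eqref{eq:prelim-node-domain} holds. The output map $C\&D$ is $v\mapsto\int q_j\operatorname{curl}v=\int \operatorname{curl}^*q_j\cdot v+\text{boundary term}$. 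Since $q_j\in H^1$, this is bounded on $H^1\supset\dom(A\&B)$, which gives the system-node properties of Definition~\ref{def:prelim-system-node}.

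\emph{Step 3 (well-posedness).} For analytic semigroups, the operators $B\in\mathcal L(U,X_{-1/2})$ and $C\in\mathcal L(X_{1/2},Y)$ are admissible, using $X_{1/2}=V_\sigma$ by Kato's square root property for forms with a G\aa rding inequality. The transfer function is bounded on a right half-plane because $\|C(\lambda-A)^{-1}B\|\lesssim|\lambda|^{-1/2}\cdot|\lambda|^{1/2}$ stays bounded there. Since $U$ and $Y$ are finite dimensional, the standard result for analytic systems with $\frac12+\frac12\le1$ then yields well-posedness. The borderline exponent needs care. As a fallback we would use that $C$ is in fact bounded on $X_{1/2-\epsilon}$ and $B$ maps into $X_{-1/4-\epsilon}$, because the trace is bounded on $H^{1/2+\epsilon}$ and these spaces are reached by complex interpolation of $X$ and $V_\sigma$.

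\emph{Step 4 (genuine unboundedness).} If $B\in\mathcal L(\mathbb C,X)$, then $\phi\mapsto\int g_{\rm u}\phi\cdot\tau$ would extend continuously to $L^2$. To rule this out, we take divergence-free $\phi_n\in V_\sigma$ concentrated in a boundary layer of width $1/n$ near the support of $g_{\rm u}$ with tangential trace $\approx g_{\rm u}\tau$, for instance a stream function $\chi(\text{dist}\cdot n)\,s$. Then $\|\phi_n\|_{L^2}\to0$ while the boundary integral stays near $\|g_{\rm u}\|^2$. Similarly, $C_jv=\int\operatorname{curl}^*q_j\cdot v\pm\int_{\Gamma_{\rm cyl}}\gamma q_j\,v\cdot\tau$. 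The same boundary-layer fields with tangential trace $\gamma q_j$ make the volume term vanish in the limit while the boundary term stays nonzero, so $C_j\notin\mathcal L(X,\mathbb C)$.
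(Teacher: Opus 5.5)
\textbf{Overall.} Your proposal is essentially correct, and Steps 1, 2 and 4 follow the paper closely. The paper uses the same Oseen form, a G{\aa}rding inequality from Korn and the trace theorem, compact resolvent from $V_\sigma\hookrightarrow X$, and $\mathfrak B\in\mathcal L(\mathbb C,V_\sigma')$. It shows unboundedness by integration by parts and thin divergence-free boundary layers.

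\textbf{Step 3 is a different route.} The paper uses no fractional powers. Lions' variational theory, the G{\aa}rding inequality, $|\langle\mathfrak Bw,v\rangle|\le c|w|\,\|v\|_{V_\sigma}$ and Young/Gronwall give one energy estimate:
$\sup_t\|v(t)\|_X^2+\int_0^T\|v\|_{H^1}^2\le c_T(\|v_0\|_X^2+\|w\|_{L^2}^2)$.
Since $C$ is bounded on $H^1$, this bounds the state, input and input--output maps at once. Your borderline case $\tfrac12+\tfrac12$ is therefore handled in one line.

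Your primary Step 3 argument is not justified as written, for two reasons.
\begin{itemize}
\item Admissibility of $X_{-1/2}$-controls and $X_{1/2}$-observations is not a general property of analytic semigroups. It needs square-function estimates, and here those are exactly what the energy estimate provides.
\item $\dom((\omega-A)^{1/2})=V_\sigma$ does not follow from a G{\aa}rding inequality alone; McIntosh's counterexample shows it can fail. It does hold here, because the convective terms are bounded $V_\sigma\to X$. Hence $\dom(A)$ equals the domain of the symmetric principal part.
\end{itemize}

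Your $\epsilon$-fallback is sound, provided you justify that the fractional domains embed into $H^\theta$ for some $\theta>\tfrac12$. This follows from the bounded $H^\infty$ calculus of the m-sectorial operator $\omega-A$, which identifies fractional domains with complex interpolation spaces, together with $[L^2,H^1]_\theta=H^\theta$. Then $\beta,\gamma<\tfrac12$ give admissibility, and $\|C\mathcal T(t)B\|\lesssim t^{-(\beta+\gamma)}$ is locally integrable. In exchange for this machinery your route gives more than the theorem states: since $\mathbf G(\lambda)\to0$ as $\lambda\to+\infty$, the system is regular with zero feedthrough.

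\textbf{Two smaller gaps.}
\begin{itemize}
\item In Step 4 for $C_j$, your boundary-layer fields lie in $V_\sigma$ but generally not in $\dom(A)$, whereas $C_j$ is defined on $\dom(A)$ via $C\&D\spvek{x}{0}$. You need the paper's density step. $\dom(A)$ is dense in $V_\sigma$ in the form norm and $C_j$ is $V_\sigma$-continuous, so an $X$-bound on $\dom(A)$ would extend to all of $V_\sigma$ and contradict your construction. For $B$ no such step is needed: $Bw\in X$ would make the boundary functional an $X$-inner product on all of $V_\sigma$.
\item In Step 2, boundedness of $C\&D$ in the graph norm needs $\|v\|_{V_\sigma}\lesssim\|v\|_X+\|A\&B\spvek vw\|_X+|w|$. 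This follows by testing the equation with $v$ and using G{\aa}rding, and you should state it explicitly.
\end{itemize}
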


\begin{proof}
On $V_\sigma$ consider the sesquilinear form
\begin{equation}
\begin{aligned}
a(v,\varphi)
={}&
2\nu
\langle\varepsilon(\varphi),\varepsilon(v)\rangle_{L^2(\Omega)}
+
\beta_{\rm R}
\langle \varphi\cdot\tau,v\cdot\tau\rangle_{L^2(\Gamma_{\rm cyl})}
\\
&+
\langle\varphi,(\bar v\cdot\nabla)v\rangle_{L^2(\Omega)}
+
\langle\varphi,(v\cdot\nabla)\bar v\rangle_{L^2(\Omega)} .
\end{aligned}
\label{eq:cylinder-form}
\end{equation}
The form is bounded on $V_\sigma\times V_\sigma$. The trace theorem \cite[Thm.~7.39]{AdamsFournier2003} gives the required
estimate for the Robin term, while
$\bar v\in W^{1,\infty}(\Omega;\mathbb R^2)$ yields
\[
\|(\bar v\cdot\nabla)v\|_{L^2}
\leq
\|\bar v\|_{L^\infty}\|\nabla v\|_{L^2},\quad
\|(v\cdot\nabla)\bar v\|_{L^2}
\leq
\|\nabla\bar v\|_{L^\infty}\|v\|_{L^2}.
\]
By Korn's inequality \cite[Thm.~6.3-3]{Ciarlet1988} and the trace theorem
\cite[Thm.~7.39]{AdamsFournier2003}, the lower-order terms are relatively
bounded with respect to the $H^1(\Omega)$-norm. Applying Young's inequality
to these terms therefore yields $c_1>0$ and $c_0\geq0$ such that
\[
\Ree a(v,v)
\geq
c_1\|v\|_{H^1(\Omega)}^2
-
c_0\|v\|_{L^2(\Omega)}^2,
\qquad
v\in V_\sigma.
\]
Hence $a$ is a densely defined closed sectorial form on $X$. The negative
of the operator associated with $a$ is precisely the zero-input operator
$A$. Consequently, $A$ generates an analytic $C_0$-semigroup on $X$.

Since $\Omega$ is bounded, the embedding
$V_\sigma\hookrightarrow X$
is compact. The standard compactness result for operators associated with
closed sectorial forms therefore implies that
the operator $(\lambda-A)^{-1}$ is compact
for one, and hence for every, $\lambda\in\rho(A)$. Thus $A$ has compact
resolvent.

The boundary control is represented on $V_\sigma$ by
\[
\langle\mathfrak B w,\varphi\rangle_{V_\sigma',V_\sigma}
=
r_{\rm u}^{-1/2}w
\int_{\Gamma_{\rm cyl}}
g_{\rm u}\,\overline{\varphi\cdot\tau}\,ds .
\]
The trace theorem gives
$\mathfrak B\in\mathcal L(\mathbb C,V_\sigma')$.
For every $T>0$, the variational evolution problem
\[
\langle\dot v(t),\varphi\rangle_{V_\sigma',V_\sigma}
+
a(v(t),\varphi)
=
\langle\mathfrak B w(t),\varphi\rangle_{V_\sigma',V_\sigma},
\qquad
\varphi\in V_\sigma,
\]
therefore has, for $v_0\in X$ and
$w\in L^2(0,T;\mathbb C)$, a unique solution
\[
v\in
C([0,T];X)
\cap
L^2(0,T;V_\sigma),
\qquad
\dot v\in L^2(0,T;V_\sigma').
\]
The G{\aa}rding estimate and Gronwall's inequality yield
\begin{equation}
\sup_{0\leq t\leq T}\|v(t)\|_X^2
+
\int_0^T\|v(t)\|_{H^1(\Omega)}^2\,dt
\leq
c_T
\left(
\|v_0\|_X^2
+
\|w\|_{L^2(0,T)}^2
\right).
\label{eq:cylinder-energy-estimate}
\end{equation}

For classical solutions, the pressure is recovered by the standard
De Rham argument. Green's formula then yields the stress boundary
conditions in the definition of $\dom(A\&B)$. The variational
realization is closed, and, for every prescribed $w\in\mathbb C$, a
sufficiently shifted variational problem provides an element
$\spvek vw\in\dom(A\&B)$. Hence the state-control operator satisfies
the closedness and input-surjectivity requirements of a system node.
The corresponding Oseen system-node construction is detailed in
\cite[Sec.~4]{ReisSchallerOseen}.

The shifted form estimate also yields
\[
\|v\|_{V_\sigma}
\leq
c
\left(
\|v\|_X
+
\left\|A\&B\spvek vw\right\|_X
+
|w|
\right),
\qquad
\spvek vw\in\dom(A\&B).
\]
For the observations,
\[
|C_jv|
\leq
\|q_j\|_{L^2(\Omega)}
\|\operatorname{curl}v\|_{L^2(\Omega)}
\leq
c_j\|v\|_{H^1(\Omega)}.
\]
Hence $C\&D$ is bounded with respect to the graph norm of $A\&B$.
Together with \eqref{eq:cylinder-energy-estimate}, this gives the
finite-time estimate
\[
\|v(T)\|_X
+
\|y\|_{L^2(0,T;\mathbb C^2)}
\leq
c_T
\left(
\|v_0\|_X
+
\|w\|_{L^2(0,T)}
\right),
\]
and therefore $\mathcal S_{\rm cw}$ is a well-posed system node.

The boundary traction functional defining $\mathfrak B$ is not continuous
with respect to the $L^2(\Omega)$ norm whenever $g_{\rm u}\neq0$.
Consequently its extrapolation-space realization does not take values in
$X$, and
$B\notin\mathcal L(\mathbb C,X)$.

Finally, since $q_j$ is real valued, integration by parts gives
\[
C_jv
=
\left\langle
\nabla^\perp q_j,v
\right\rangle_{L^2(\Omega)}
-
\int_{\partial\Omega}
q_j\,v\cdot\tau\,ds.
\]
The traces of $v$ vanish on
$\Gamma_{\rm in}\cup\Gamma_{\rm w}$, whereas on
$\Gamma_{\rm cyl}$ only the normal component is prescribed. Hence
$\gamma q_j|_{\Gamma_{\rm cyl}}\not\equiv0$ produces a nontrivial
functional of the free tangential trace.

This functional is not continuous with respect to the $L^2(\Omega)$
norm. Indeed, divergence-free boundary-layer fields supported in an
arbitrarily thin collar of $\Gamma_{\rm cyl}$ can be chosen with zero
normal trace and prescribed nonzero tangential trace, while their
$L^2(\Omega)$ norm tends to zero.

Since $A$ is the operator associated with the closed sectorial form
$a$, its domain $\dom(A)$ is dense in $V_\sigma$ with respect to the
form norm. If $C_j|_{\dom(A)}$ admitted a bounded extension to $X$,
then continuity with respect to the $V_\sigma$-norm and density of
$\dom(A)$ in $V_\sigma$ would imply that the functional above is
$L^2(\Omega)$-continuous on all of $V_\sigma$, a contradiction.
Consequently,
$C_j\notin\mathcal L(X,\mathbb C)$.
On the other hand, the preceding graph-norm estimate gives
\[
C_j\in\mathcal L(\dom(A),\mathbb C),
\qquad
j=1,2.
\]
\end{proof}
\paragraph{Open-loop stability.}
Theorem~\ref{thm:cylinder-analytic-node} reduces exponential stability
of the open-loop system to a spectral condition. Since the main operator
$A$ generates an analytic semigroup, its exponential growth bound agrees
with its spectral bound,
\begin{equation}
\omega_0(A)
=
s(A)
:=
\sup\setdef{\Ree\lambda}{\lambda\in\sigma(A)};
\label{eq:cylinder-spectral-growth}
\end{equation}
see \cite[Cor.~IV.3.12]{EngelNagel2000}. Moreover, $A$ has compact
resolvent, so its spectrum consists of isolated eigenvalues of finite
algebraic multiplicity, with no finite accumulation point. Consequently,
\[
s(A)<0
\quad\Longleftrightarrow\quad
\text{$A$ generates an exponentially stable semigroup}.
\]

The sign of $s(A)$ depends on the stationary base flow and is therefore
not implied by the general assumptions of
Theorem~\ref{thm:cylinder-analytic-node}. For the operating point
$\mathrm{Re}_D=50$ used below, the numerical spectral computations in
Subsection~\ref{subsec:cylinder-results} provide evidence that $s(A)<0$.

\begin{corollary}[Consequence of a negative spectral bound]
\label{cor:cylinder-infinite-time}
If
$s(A)<0$,
then the main semigroup is exponentially stable and
$\mathcal S_{\rm cw}$ is infinite-time well posed. In particular, $B$ and
$C$ are infinite-time admissible, and the zero feedback is a stabilizing
initialization for the Newton--Kleinman iteration.
\end{corollary}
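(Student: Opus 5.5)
Exponential stability follows from \eqref{eq:cylinder-spectral-growth}: if $s(A)<0$, then $\omega_0(A)=s(A)<0$, so there are $M\geq1$ and $\omega>0$ with $\|\mathcal T(t)\|\leq Me^{-\omega t}$. The remaining claims are uniform-in-$T$ bounds for the four families in \eqref{eq:prelim-finite-maps}. Theorem~\ref{thm:cylinder-analytic-node} already provides well-posedness, so $\mathcal B_T$, $\mathcal C_T$ and $\mathcal D_T$ are bounded on every finite interval, in particular on $[0,1]$.

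\textbf{Uniform bounds by concatenation.} For the input map, write an input on $[0,T]$ as a sum of pieces supported on unit intervals $[T-j-1,T-j]$. Time invariance and the composition property give
\[
\mathcal B_T u=\sum_j \mathcal T(j)\,\mathcal B_1 u_j ,
\]
where $u_j$ is the shifted $j$-th piece. Hence
\[
\|\mathcal B_Tu\|\leq \|\mathcal B_1\|\sum_j Me^{-\omega j}\|u_j\|\leq c\,\|u\|_{L^2},
\]
by Cauchy--Schwarz and summability of $e^{-2\omega j}$. For the output map, $(\mathcal C_T x)$ restricted to $[j,j+1]$ equals $\mathcal C_1\mathcal T(j)x$. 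Therefore
\[
\|\mathcal C_Tx\|^2\leq\|\mathcal C_1\|^2M^2\sum_j e^{-2\omega j}\|x\|^2 .
\]
For the input--output map, the output on $[j,j+1]$ is $\mathcal D_1$ applied to the local input piece plus $\mathcal C_1$ applied to the state reached at time $j$. That state is $\mathcal B_j$ of the earlier input, which is a convolution of the pieces with the exponentially decaying kernel $\|\mathcal C_1\|\,\|\mathcal B_1\|\,Me^{-\omega(j-i-1)}$. Young's inequality for sequences ($\ell^1*\ell^2\subset\ell^2$) then gives a uniform bound on $\|\mathcal D_T\|$. This establishes infinite-time well-posedness. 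An alternative route is to use the $H^\infty$ characterisation via the transfer function, since $\mathbf G$ is bounded on a right half-plane containing $\overline{\mathbb C_+}$ by analyticity and decay; the discrete convolution argument is more elementary, so I would use it.

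\textbf{Admissibility and stabilizing initialization.} The infinite-time admissibility of $B$ and $C$ is the uniform boundedness of $\mathcal B_T$ and $\mathcal C_T$ just shown. For the zero feedback $F\&G=0$, the closed-loop node is the original node with the physical input appended as an output. The relation $u=v$ is trivially an admissible feedback, and its finite-time maps are those of $\mathcal S_{\rm cw}$ together with the identity on inputs. These are uniformly bounded, and the zero-input semigroup $\mathcal T$ is exponentially, hence strongly, stable. By the definition in Section~\ref{subsec:prelim-feedback}, the zero feedback is therefore stabilizing.

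\textbf{Main obstacle.} The only step requiring care is the $\mathcal D_T$ bound. It relies on the convolution/concatenation identity for well-posed systems, which is a standard result (Staffans, Ch.~2).
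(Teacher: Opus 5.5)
Your proof is correct and follows the same route as the paper. Analyticity gives $\omega_0(A)=s(A)<0$, and exponential stability of the main semigroup then yields uniform-in-$T$ bounds on all finite-time maps. The paper gets that second step by citing Staffans' result that a well-posed system has the same growth bound as its semigroup, whereas your unit-interval concatenation (Cauchy--Schwarz for $\mathcal B_T$ and $\mathcal C_T$, Young's inequality for $\mathcal D_T$) is essentially the standard proof of that result written out. You also spell out why the zero feedback, with $u=v$ retained as an output, is stabilizing, which the paper leaves implicit.
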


\begin{proof}
By analyticity and \eqref{eq:cylinder-spectral-growth},
$\omega_0(A)=s(A)<0$. Hence the main semigroup is exponentially stable.
The exponential growth rate of a well-posed linear system agrees with that
of its main semigroup; see \cite[Lem.~3]{Staffans1998Coprime}.
Consequently all finite-time system maps are bounded uniformly for
$T>0$. Thus $\mathcal S_{\rm cw}$ is infinite-time well posed.
\end{proof}
\subsection{Adjoint and shifted-resolvent discretization}
\label{subsec:cylinder-adjoint}

The low-rank feedback evaluations below require shifted resolvents of the
adjoint system node. We therefore derive an explicit realization of
$\mathcal S_{\rm cw}^*$. Besides providing the adjoint equations, this
makes it possible to apply $C^*$ without representing the unbounded
observation operator as an $X$-valued functional.

With the tangent orientation fixed above, define, for $q\in H^1(\Omega)$,
\[
\nabla^\perp q
=
\spvek{\phantom{-}\partial_2q}{-\partial_1q}.
\]
For $z=\spvek{z_1}{z_2}\in\mathbb C^2$, set
\[
q_z
=
z_1q_1+z_2q_2.
\]
Since the functions $q_1$ and $q_2$ are real valued, the definition of the
performance output gives
\begin{equation}
\langle Cv,z\rangle_{\mathbb C^2}
=
\int_\Omega
q_z\,\overline{\operatorname{curl}v}\,dx
=
\left\langle
v,\nabla^\perp q_z
\right\rangle_{L^2(\Omega)}
-
\int_{\partial\Omega}
q_z\,\overline{v\cdot\tau}\,ds.
\label{eq:cylinder-output-green}
\end{equation}

Let $r$ be a smooth divergence-free test field and let $\pi$ denote the
corresponding adjoint pressure. Integration by parts in the differential
operator from \eqref{eq:cylinder-linearized} gives
\begin{align}
&
\left\langle
\operatorname{div}\sigma(v,p)
-(\bar v\cdot\nabla)v
-(v\cdot\nabla)\bar v,
r
\right\rangle_{L^2(\Omega)}
\notag\\
&\quad=
\left\langle
v,
\operatorname{div}\sigma(r,\pi)
+
(\bar v\cdot\nabla)r
-
(\nabla\bar v)^\top r
\right\rangle_{L^2(\Omega)}
\notag\\
&\qquad
+
\int_{\partial\Omega}
\left(
\overline{\sigma(v,p)n}\cdot r
-
\overline v\cdot
\bigl(
\sigma(r,\pi)n+(\bar v\cdot n)r
\bigr)
\right)\,ds .
\label{eq:cylinder-adjoint-green}
\end{align}
The term involving $\bar v\cdot n$ is essential here, since the stationary
flow has nonzero normal velocity at the inlet and outlet.

Combining \eqref{eq:cylinder-output-green} and
\eqref{eq:cylinder-adjoint-green} determines the adjoint boundary
conditions. On $\Gamma_{\rm in}\cup\Gamma_{\rm w}$ the primal velocity
vanishes while its traction is free, and hence
$r=0$ on $\Gamma_{\rm in}\cup\Gamma_{\rm w}$.
On the outlet, the primal condition
$\sigma(v,p)n=0$ gives
\begin{equation}
\sigma(r,\pi)n
+
(\bar v\cdot n)r
=
-q_z\tau
\qquad
\text{on }\Gamma_{\rm out}.
\label{eq:cylinder-adjoint-outlet}
\end{equation}
Finally, on $\Gamma_{\rm cyl}$ one has
$\bar v\cdot n=0$,
$v\cdot n=0$, and
\[
\bigl(\sigma(v,p)n\bigr)\cdot\tau
+
\beta_{\rm R}v\cdot\tau
=
r_{\rm u}^{-1/2}g_{\rm u}w.
\]
Since the normal traction of the primal problem is otherwise free, the
adjoint state satisfies
$r\cdot n=0$ on $\Gamma_{\rm cyl}$.
The tangential part of the boundary identity then yields
\begin{equation}
\bigl(\sigma(r,\pi)n\bigr)\cdot\tau
+
\beta_{\rm R}r\cdot\tau
=
-q_z
\qquad
\text{on }\Gamma_{\rm cyl}.
\label{eq:cylinder-adjoint-cylinder}
\end{equation}
The remaining cylinder term is
\[
\overline w\,
r_{\rm u}^{-1/2}
\int_{\Gamma_{\rm cyl}}
g_{\rm u}\,r\cdot\tau\,ds,
\]
and therefore determines the output of the adjoint node.

More precisely, for
$\spvek rz\in X\times\mathbb C^2$ we put
$\spvek rz\in\dom(\mathcal S_{\rm cw}^*)$ if $r\in V_\sigma$ and there
exists a pressure $\pi\in L^2(\Omega)$ such that
\begin{align*}
&\sigma(r,\pi)
\in
H(\operatorname{div},\Omega;\mathbb C^{2\times2}),\\
&f^*
:=
\operatorname{div}\sigma(r,\pi)
+
(\bar v\cdot\nabla)r
-
(\nabla\bar v)^\top r
+
\nabla^\perp q_z
\in X,
\end{align*}
and the boundary conditions
\eqref{eq:cylinder-adjoint-outlet} and
\eqref{eq:cylinder-adjoint-cylinder} hold in the corresponding weak trace
spaces. We then define
\begin{equation}
\mathcal S_{\rm cw}^*
\pvek rz
=
\spvek{
\displaystyle
\operatorname{div}\sigma(r,\pi)
+
(\bar v\cdot\nabla)r
-
(\nabla\bar v)^\top r
+
\nabla^\perp q_z
}{
\displaystyle
r_{\rm u}^{-1/2}
\int_{\Gamma_{\rm cyl}}
g_{\rm u}\,r\cdot\tau\,ds
}.
\label{eq:cylinder-adjoint-node}
\end{equation}

Indeed, for
$\spvek vw\in\dom(\mathcal S_{\rm cw})$ and
$\spvek rz\in\dom(\mathcal S_{\rm cw}^*)$,
the preceding Green identities and boundary conditions give
\[
\left\langle
(A\&B)\spvek vw,r
\right\rangle_X
+
\left\langle
(C\&D)\spvek vw,z
\right\rangle_{\mathbb C^2}
=
\left\langle
v,
(A^*\&C^*)\spvek rz
\right\rangle_X
+
\left\langle
w,
(B^*\&D^*)\spvek rz
\right\rangle_{\mathbb C}.
\]
Thus \eqref{eq:cylinder-adjoint-node} is precisely the Hilbert-space
adjoint of the system node from \eqref{eq:cylinder-node}. By
\cite[Lem.~6.2.14]{Staffans2005}, the adjoint of a system node is again a
system node.

In particular, the action of $C^*$ required in the feedback evaluation is
realized directly through the volume term
$\nabla^\perp q_z$ together with the boundary data
$-q_z\tau$ on $\Gamma_{\rm out}$ and $-q_z$ in the tangential Robin condition
on $\Gamma_{\rm cyl}$. No $X$-valued Riesz representative of the
unbounded wake observation is formed.

\paragraph{Taylor--Hood discretization and shifted resolvents.}

The numerical realization is based directly on shifted resolvent problems
for the primal and adjoint system nodes. Spatial discretization is therefore
introduced only after the continuous system-node and adjoint constructions
of the preceding subsections. In particular, we do not form a discrete
divergence-free basis and do not assemble a finite-dimensional realization
of the generator.

Let $\mathcal T_h$ be a conforming triangulation of $\Omega$. We use the
Taylor--Hood pair
\[
V_h\subset H^1(\Omega;\mathbb C^2),
\qquad
Q_h\subset L^2(\Omega),
\]
consisting of continuous piecewise quadratic velocities and continuous
piecewise affine pressures. The essential velocity conditions
\[
v_h=0
\quad\text{on }\Gamma_{\rm in}\cup\Gamma_{\rm w},
\qquad
v_h\cdot n=0
\quad\text{on }\Gamma_{\rm cyl}
\]
are incorporated into $V_h$. The outlet condition and the tangential
Robin condition on the cylinder remain natural boundary conditions. Since
the outlet traction fixes the pressure level, no additional zero-mean
constraint is imposed on $Q_h$.

We use the restriction of the Oseen form
\eqref{eq:cylinder-form} to $V_h\times V_h$ and define the discrete
divergence pairing by
\[
b(v_h,q_h)
=
\int_\Omega
\overline{q_h}\,\operatorname{div}v_h\,dx.
\]
Let $M_h$, $L_h$, and $G_h$ denote the matrices representing the velocity
mass form, the Oseen form, and the divergence pairing, respectively.

The scaled cylinder traction defines the discrete input functional
$b_h\in V_h'$ by
\[
\langle b_hw,\varphi_h\rangle
=
r_{\rm u}^{-1/2}w
\int_{\Gamma_{\rm cyl}}
g_{\rm u}\,\overline{\varphi_h\cdot\tau}\,ds,
\qquad
\varphi_h\in V_h.
\]
Here $b_h$ represents the boundary functional in the finite-element dual
space; it is not interpreted as an $X$-valued approximation of the
unbounded continuous control operator.

Let $\lambda\in\rho(A)$. For $f\in X$ and $w\in\mathbb C$, the Galerkin
approximation of
\[
(\lambda-A_{-1})^{-1}(f+Bw)
\]
is obtained by finding $(v_h,p_h)\in V_h\times Q_h$ such that, for all $(\varphi_h,q_h)\in V_h\times Q_h$,
\[
\begin{aligned}
\lambda
\langle \varphi_h,v_h\rangle_{L^2(\Omega)}
+
a(v_h,\varphi_h)
-
\int_\Omega
p_h\,\overline{\operatorname{div}\varphi_h}\,dx
&=
\langle \varphi_h,f\rangle_{L^2(\Omega)}
+
r_{\rm u}^{-1/2}w
\int_{\Gamma_{\rm cyl}}
g_{\rm u}\,\overline{\varphi_h\cdot\tau}\,ds,
\\
\int_\Omega
\overline{q_h}\,\operatorname{div}v_h\,dx
&=0.
\end{aligned}
\]
Writing $f_h$ for the coefficient vector of the volume functional
$\varphi_h\mapsto\langle f,\varphi_h\rangle_{L^2(\Omega)}$, the resulting
bordered system is
\[
\begin{bmatrix}
\lambda M_h+L_h & -G_h^*\\
G_h             & 0
\end{bmatrix}
\begin{bmatrix}
v_h\\
p_h
\end{bmatrix}
=
\begin{bmatrix}
f_h+b_hw\\
0
\end{bmatrix}.
\]
For later use, we write
\[
\mathbb K_h(\lambda)
=
\begin{bmatrix}
\lambda M_h+L_h & -G_h^*\\
G_h             & 0
\end{bmatrix}.
\]
In particular, the discrete input lift required by the system-node
resolvent formulas is obtained from
\begin{equation}
\mathbb K_h(\lambda)
\begin{bmatrix}
v_h^{\rm in}\\
p_h^{\rm in}
\end{bmatrix}
=
\begin{bmatrix}
b_hw\\
0
\end{bmatrix}.
\label{eq:cylinder-input-lift}
\end{equation}
The pressure is retained only as a Lagrange multiplier enforcing discrete
incompressibility and is not part of the state space used by the
Newton--Kleinman iteration.

The two wake outputs are discretized in exactly the form in which they
occur in the continuous system node. We define
\[
C_h:V_h\to\mathbb C^2,
\qquad
(C_hv_h)_j
=
\int_\Omega
q_j\,\operatorname{curl}v_h\,dx,
\qquad
j=1,2.
\]
No integration by parts is used in the assembly of $C_h$. Thus the
finite-element output is the direct Galerkin realization of the
continuous vorticity observation rather than an approximation obtained
from an $X$-Riesz representative.

The adjoint resolvent is treated in the corresponding mixed formulation.
Let $z\in\mathbb C^2$. The Galerkin approximation of the adjoint-node
input resolvent corresponding to the input $z$ is obtained from
\begin{equation}
\begin{bmatrix}
\lambda M_h+L_h^* & -G_h^*\\
G_h               & 0
\end{bmatrix}
\begin{bmatrix}
r_h\\
\pi_h
\end{bmatrix}
=
\begin{bmatrix}
C_h^*z\\
0
\end{bmatrix}.
\label{eq:cylinder-adjoint-shifted}
\end{equation}
Here $C_h^*z$ is the coefficient vector of the corresponding
finite-element dual functional and is not interpreted as an
$L^2$-Riesz representative of the continuous observation adjoint.

More generally, if $g_h$ is the coefficient vector of an additional
state-space dual right-hand side, one solves
\begin{equation}
\begin{bmatrix}
\lambda M_h+L_h^* & -G_h^*\\
G_h               & 0
\end{bmatrix}
\begin{bmatrix}
r_h\\
\pi_h
\end{bmatrix}
=
\begin{bmatrix}
g_h+C_h^*z\\
0
\end{bmatrix}.
\label{eq:cylinder-adjoint-general-shifted}
\end{equation}
Equivalently, this dual functional can be assembled directly from the
explicit adjoint system node derived above. For
$q_z=z_1q_1+z_2q_2$,
the observation contribution consists of the volume forcing
$\nabla^\perp q_z$, the outlet traction $-q_z\tau$, and the tangential
cylinder datum $-q_z$.

All resolvent actions required below can therefore be reduced to sparse
systems with the corresponding primal or adjoint Taylor--Hood
saddle-point matrices. The initial feedback
evaluation requires adjoint solves of the form
\eqref{eq:cylinder-adjoint-shifted}, while subsequent ADI steps require
\eqref{eq:cylinder-adjoint-general-shifted}. Primal input lifts of the form
\eqref{eq:cylinder-input-lift} enter the feedback and
flow-inversion formulas.

Consequently, neither $A_h$ nor $A_h^*$ is explicitly assembled as an
operator on a discrete divergence-free state space. The numerical
realization uses only shifted Taylor--Hood saddle-point solves and
finite-dimensional operations on the input and output spaces. This
preserves the order of operations of the continuous theory,
\[
\text{system node}
\;\longrightarrow\;
\text{shifted resolvent problem}
\;\longrightarrow\;
\text{Taylor--Hood discretization}.
\]
\subsection{Adaptive Newton--Kleinman realization}
\label{subsec:cylinder-adaptive-nk}

By Corollary~\ref{cor:cylinder-infinite-time}, the zero feedback is a
stabilizing initialization whenever $s(A)<0$. For
$\mathrm{Re}_D=50$, the numerical spectral computations in
Subsection~\ref{subsec:cylinder-results} provide evidence for this
condition. We therefore use
$\Sigma_0=\mathcal S_{\rm cw}$
as the initialization of the numerical iteration.

For the current system node $\Sigma_k$, associated with a stabilizing
feedback and with auxiliary input $v_k\in\mathbb C$, the performance output is
\[
z_k=\spvek{y}{w}\in\mathbb C^3.
\]
We use direct feedback evaluation throughout:
\begin{equation}
P_{k+1}
=
\mathcal C_{k,\infty}^*
\mathcal C_{k,\infty},
\label{eq:cylinder-direct-policy}
\end{equation}
where $\mathcal C_{k,\infty}$ is the infinite-time zero-input observation
map of $\Sigma_k$. The resulting cost operator is then used to construct
the next resolvent-normalized Newton factor and the updated feedback
relation.

\paragraph{Low-rank feedback evaluation and update.}
The feedback-cost Gramian in \eqref{eq:cylinder-direct-policy} is approximated by the
low-rank ADI iteration from
Subsection~\ref{subsec:nk-computational}. The required adjoint actions are
evaluated through shifted resolvent problems rather than through an
$X$-valued realization of the unbounded observation operator. The corresponding cost
operator is stored in low-rank form,
\begin{equation}
P_{k+1}
\approx
Z_{P,k+1}Z_{P,k+1}^*.
\label{eq:cylinder-low-rank-policy}
\end{equation}
The low-rank factor is understood with respect to the discrete state
inner product induced by the velocity mass matrix.

Complex shifts are processed together with their conjugates.

For $k=0$, the shifted resolvents are the Taylor--Hood problems derived
above. At later steps, the closed-loop resolvents are obtained recursively
from the same shifted Taylor--Hood solves by
\eqref{eq:nk-computational-resolvent-update}. Since the auxiliary input is scalar, the inversion of
$\mathbf N_{k+1}(\lambda)$ is scalar.

The next Newton factor
$\Gamma_{k+1}\&\Lambda_{k+1}$ is obtained from the linearized Lur'e
identity \eqref{eq:nk-linearized-lure}, using the fixed normalization
parameter
\[
\alpha=1.
\]
The condition $v_{k+1}=0$ defines the updated feedback; the corresponding
system node is $\Sigma_{k+1}$. Thus neither the
closed-loop generator $A_k$ nor its adjoint is assembled explicitly; all
successive feedback evaluations remain based on shifted primal and adjoint
Taylor--Hood solves.

\paragraph{ADI shifts and stopping.}
For every direct feedback evaluation, ADI shifts are selected on the current spatial level by a Penzl-type
spectral heuristic based on the available shifted-resolvent information; see
\cite{Penzl2000,BennerMenaSaak2008}. Thus no shift
sequence is transferred from a preceding mesh or Newton step. Nonreal shifts
are used together with their complex conjugates, and the resulting finite
shift sequence is repeated cyclically. These ADI shifts are distinct from
the fixed sampling points $\Lambda_{\rm sp}$ used below for spatial error
estimation.

For the initial zero-feedback evaluation $P_1$ on $\mathcal M_0$, we use
the resolvent-trans\-formed ADI residual at the fixed reference point
$\beta=1$. If $R_j^{\rm ADI}$ denotes the formal ADI residual factor after
the $j$th complete shift cycle, set
\[
R_{\beta,j}
=
(\beta-A_0^*)^{-1}R_j^{\rm ADI}
\]
and
\[
\rho_{\beta,j}
=
\frac{\|R_{\beta,j}\|_{M_h,\rm HS}}
     {\|R_{\beta,0}\|_{M_h,\rm HS}},
\qquad
\|R\|_{M_h,\rm HS}^2
=
\operatorname{trace}(R^*M_hR).
\]
The initial evaluation is terminated after a complete shift cycle once
\[
\rho_{\beta,j}\leq10^{-7}.
\]

For all subsequent feedback evaluations, $\rho_{\beta,j}$ is retained only
as a diagnostic quantity. Stopping is instead based on the stabilization
of the low-rank cost-operator approximation together with an independent
check of the prospective Newton update. Let $P_{k+1}^{(j)}$ denote the
low-rank approximation after the $j$th complete shift cycle. On a fixed
family of normalized state probes $x_\ell$, we monitor the relative changes
of the actions $P_{k+1}^{(j)}x_\ell$ and of the quadratic values
\[
\left\langle
P_{k+1}^{(j)}x_\ell,x_\ell
\right\rangle.
\]
We denote the corresponding maximal relative changes by
$\delta_{{\rm act},j}$ and $\delta_{{\rm val},j}$, respectively. In
addition, $\delta_{{\rm L},j}$ denotes the maximal relative defect of the
linearized Lur'e identity for the prospective next Newton factor, evaluated
on a fixed family of six independent graph probes.

A feedback evaluation with $k\geq1$ is terminated only after a complete
shift cycle for which all three conditions
\[
\delta_{{\rm act},j}\leq2\cdot10^{-6},
\qquad
\delta_{{\rm val},j}\leq2\cdot10^{-8},
\qquad
\delta_{{\rm L},j}\leq5\cdot10^{-9}
\]
are satisfied. The resulting low-rank factor
$Z_{P,k+1}$ from \eqref{eq:cylinder-low-rank-policy} then provides the
cost-operator approximation used in the Newton-factor construction.

\paragraph{Coupled Newton and mesh iteration.}
The computation starts on a deliberately coarse spatial level
$\mathcal M_0$. On every level $\mathcal M_k$ we perform exactly one
complete Newton--Kleinman update: the current feedback cost operator is
evaluated, the next Newton factor is constructed, and the system node
$\Sigma_{k+1}$ associated with the updated feedback is obtained. Only
afterwards is the spatial accuracy assessed. If refinement
is required, local refinement produces $\mathcal M_{k+1}$ and the next
Newton step is performed there.

Thus the LR--ADI iteration resolves the feedback cost operator on the current
spatial level, whereas the outer adaptive iteration controls the spatial
resolution. After the update
$\Sigma_k\to\Sigma_{k+1}$ has been completed on $\mathcal M_k$, the
spatial indicator $\eta_{{\rm space},k}$ is evaluated for this updated
system node. The same feedback relation defining $\Sigma_{k+1}$ is
transferred to the temporary uniform child $\mathcal M_k^+$, and the
coarse- and fine-grid resolvent quantities are compared there. Hence the
two-grid estimator measures spatial error at the newly computed feedback,
rather than at the preceding feedback $\Sigma_k$. This avoids solving
several Newton steps on an underresolved mesh or resolving the PDE to final
accuracy before the feedback iteration has converged.

\paragraph{Two-grid resolvent estimator.}
Spatial accuracy is assessed by a temporary uniformly refined child mesh
$\mathcal M_k^+$ of the current adaptive mesh $\mathcal M_k$. The child
mesh is used only for error estimation and is not used as the next
adaptive mesh.

The estimator probes the resolvent at five fixed points,
\[
\Lambda_{\rm sp}
=
\left\{
0.1,\,
1,\,
0.1+\tfrac12 i\omega_{\rm w},\,
0.1+i\omega_{\rm w},\,
0.1+\tfrac32 i\omega_{\rm w}
\right\},
\qquad
\omega_{\rm w}=16.88482141388.
\]
The wake frequency $\omega_{\rm w}$ is obtained from an open-loop
eigensolve on $\mathcal M_0$ before the adaptive Newton--Kleinman loop and
is then kept fixed. The five sampling points are therefore unchanged from
one Newton step to the next. They cover the low-frequency part of the
resolvent together with the frequency range around the dominant wake
oscillation.

At every $\lambda\in\Lambda_{\rm sp}$ we compare the resolvent quantities
of the updated system node $\Sigma_{k+1}$ on $\mathcal M_k$ and
$\mathcal M_k^+$. Since its reopened auxiliary input is scalar, there is
one primal input-resolvent solve,
\[
(\lambda-A_{k+1,-1})^{-1}B_{k+1},
\]
and, since its performance output
$z_{k+1}=\spvek{y}{w}$ has three scalar components, three adjoint
resolvent solves are required for the corresponding output directions. Thus each sample point contributes one primal and three adjoint
comparisons.

Let $v_{k+1,\lambda,h}$ and $v_{k+1,\lambda,+}$ denote the velocity
components of the primal resolvent solutions on $\mathcal M_k$ and
$\mathcal M_k^+$, respectively, and let
$r_{k+1,\lambda,h}^{(\ell)}$ and
$r_{k+1,\lambda,+}^{(\ell)}$, $\ell=1,2,3$, denote the corresponding
adjoint velocity solutions. Let $I_k^+$ denote the reference-$P_2$
interpolation from the coarse velocity space on $\mathcal M_k$ to the
corresponding space on $\mathcal M_k^+$. We define the two-grid
differences
\[
e_{k,\lambda}^{(0)}
=
v_{k+1,\lambda,+}-I_k^+v_{k+1,\lambda,h},
\]
and
\[
e_{k,\lambda}^{(\ell)}
=
r_{k+1,\lambda,+}^{(\ell)}
-
I_k^+r_{k+1,\lambda,h}^{(\ell)},
\qquad
\ell=1,2,3.
\]
For a velocity field $e$ and an element $T\in\mathcal M_k$, let
\[
\|e\|_{E(T)}^2
=
\|e\|_{L^2(T)}^2
+
\nu\|\nabla e\|_{L^2(T)}^2.
\]
Here the integrals over $T$ are evaluated by summing over its children in
$\mathcal M_k^+$. Each of the primal and adjoint comparisons is normalized
by the corresponding child-grid solution, so that quantities of different
absolute size contribute on a comparable relative scale. The resulting
local indicator is
\[
\eta_{T,k}^2
=
\frac{1}{|\Lambda_{\rm sp}|}
\sum_{\lambda\in\Lambda_{\rm sp}}
\left[
\frac{
\|e_{k,\lambda}^{(0)}\|_{E(T)}^2
}{
\|v_{k,\lambda,+}\|_{E(\Omega)}^2
}
+
\frac13
\sum_{\ell=1}^3
\frac{
\|e_{k,\lambda}^{(\ell)}\|_{E(T)}^2
}{
\|r_{k,\lambda,+}^{(\ell)}\|_{E(\Omega)}^2
}
\right].
\]
The global two-grid quantity is
\[
\eta_{{\rm space},k}
=
\left(
\sum_{T\in\mathcal M_k}
\eta_{T,k}^2
\right)^{1/2}.
\]
This estimator is deliberately based on resolvent quantities rather than
on the entries of a discrete cost matrix. It therefore tests exactly the
objects through which both the LR--ADI feedback evaluation and the
feedback update are realized.

\paragraph{Mesh refinement.}
If the spatial criterion is not satisfied, elements are selected by
D\"orfler marking. Thus a set
$\mathcal A_k\subset\mathcal M_k$ is chosen such that
\[
\sum_{T\in\mathcal A_k}\eta_{T,k}^2
\geq
\theta
\sum_{T\in\mathcal M_k}\eta_{T,k}^2.
\]
The marked elements are refined to obtain the next spatial level
$\mathcal M_{k+1}$. Throughout the adaptive computation we use
\[
\theta=0.35.
\]

The temporary uniform child $\mathcal M_k^+$ is therefore used only to
decide where refinement is required. The subsequent Newton step is carried
out on the locally refined adaptive mesh $\mathcal M_{k+1}$ rather than
on the generally much larger uniform child.

No dense cost matrix, dense feedback matrix, or closed-loop generator
is transferred between meshes. Instead, the low-rank cost-operator information
and the data representing the current feedback relation are transferred to
the refined finite-element space by the corresponding interpolation and
projection operators. The sparse resolvent realization and the shifted
resolvent problems are then assembled on the new mesh. In particular,
the spatial refinement does not alter the operator-level definition of
the Newton step.

\paragraph{Feedback-change indicator.}
On a fixed spatial level, the scalar feedback relation obtained by setting the auxiliary input to zero can be written in discrete coordinates as
\[
H_{\dot x,k}\dot x_h+H_{x,k}x_h+d_k w=0,
\]
where $\dot x_h$ denotes the discrete state derivative. We measure the change
between successive feedback relations in the dual norm induced by the
discrete kinetic-energy inner product and define, for $k\geq1$,
\[
\eta_{{\rm fb},k}
=
\frac12
\left(
\frac{
\|H_{\dot x,k+1}/d_{k+1}-H_{\dot x,k}/d_k\|_{X_h'}
}{
\|H_{\dot x,k}/d_k\|_{X_h'}
}
+
\frac{
\|H_{x,k+1}/d_{k+1}-H_{x,k}/d_k\|_{X_h'}
}{
\|H_{x,k}/d_k\|_{X_h'}
}
\right).
\]
Thus $\eta_{{\rm fb},k}$ measures the relative change of the
feedback relation itself rather than the change of a dense feedback
matrix.
\paragraph{Stopping of the coupled iteration.}
Newton convergence and spatial convergence are monitored separately. We
use the feedback tolerance
\[
\eta_{{\rm fb},k}
\leq
10^{-2}
\]
and the spatial tolerance
\[
\eta_{{\rm space},k}
\leq
5\cdot10^{-2}.
\]
The computation is terminated only if both criteria are satisfied on the
same completed adaptive level. Thus a small Newton correction does not
terminate the computation while the spatial discretization remains
underresolved, and conversely a sufficiently fine mesh does not trigger
additional Newton steps once the feedback iteration has converged.

The resulting strategy intentionally balances the two error sources.
Early Newton steps are carried out on relatively coarse meshes, while the
spatial discretization is sharpened as the feedback iteration approaches its limit.
This avoids resolving the PDE to the final accuracy before the feedback
itself is sufficiently accurate.

\subsection{Numerical results}
\label{subsec:cylinder-results}

\paragraph{Open-loop stability validation.}
We first validate numerically the spectral condition used in
Corollary~\ref{cor:cylinder-infinite-time}. The rightmost oscillatory
wake eigenvalue is computed independently on the first five
Taylor--Hood levels. The initial eigenvalue seed on $\mathcal M_0$ is
obtained from an independent broad spectral search, and the corresponding
branch is then tracked across the spatial hierarchy. The results are
shown in Table~\ref{tab:cylinder-openloop-spectrum}.

\begin{table}[t]
\centering
\caption{Mesh dependence of the tracked open-loop wake eigenvalue at
$\mathrm{Re}_D=50$.}
\label{tab:cylinder-openloop-spectrum}
\begin{tabular}{c|r|r|r}
\hline
level & velocity / pressure dofs
& $\Ree\lambda_{\rm w}$ & $\operatorname{Im}\lambda_{\rm w}$\\
\hline
$\mathcal M_0$ & $5510/785$   & $-0.07598984$ & $16.88482141$\\
$\mathcal M_1$ & $6552/922$   & $-0.04620347$ & $17.07011599$\\
$\mathcal M_2$ & $7898/1100$  & $-0.05759089$ & $17.08191403$\\
$\mathcal M_3$ & $9534/1323$  & $-0.07337304$ & $17.12327949$\\
$\mathcal M_4$ & $12378/1716$ & $-0.07038228$ & $17.12771185$\\
\hline
\end{tabular}
\end{table}

On $\mathcal M_4$, the tracked wake eigenpair has relative generalized-eigenvalue
residual $5.9\cdot10^{-13}$. To check that no other computed eigenvalue
lies further to the right, we perform an independent shift-invert screen
with
\[
\sigma=a+i\omega,
\qquad
a\in\{0.05,0.5\},
\qquad
\omega=0,3,\ldots,30.
\]
All $22$ shifted eigensolves converge. The rightmost screened eigenvalue,
$-0.07038228+17.12771185i$, agrees with the tracked wake branch to within
$2.4\cdot10^{-12}$ and has relative residual $5.5\cdot10^{-13}$; the
next rightmost candidate is the real eigenvalue $-2.08696314$.
An independent adjoint eigensolve reproduces the conjugate eigenvalue to
within $1.2\cdot10^{-12}$, with residual $7.3\cdot10^{-15}$.

These computations support the condition $s(A)<0$ for the stationary
flow at $\mathrm{Re}_D=50$ and hence the use of the zero feedback as the
initial stabilizing feedback. They constitute a numerical spectral
validation of the discretized model, rather than a computer-assisted
enclosure of the spectrum of the continuous operator.

\paragraph{Adaptive Newton--Kleinman convergence.}
Starting from the zero feedback, the initial level $\mathcal M_0$ produces
the first updated system node $\Sigma_1$. Thereafter, exactly one
Newton--Kleinman update is performed on every refined spatial level.
Table~\ref{tab:cylinder-adaptive-nk} reports the feedback and spatial
indicators from $\mathcal M_1$ onward.

\begin{table}[t]
\centering
\caption{Coupled Newton--Kleinman and spatial convergence. The second
column shows the system node obtained after the Newton update on the
corresponding spatial level.}
\label{tab:cylinder-adaptive-nk}
\begin{tabular}{c|c|r|r}
\hline
level & updated system node
& $\eta_{{\rm fb},k}$
& $\eta_{{\rm space},k}$\\
\hline
$\mathcal M_1$ & $\Sigma_2$ & $4.8545\cdot10^{-1}$ & $2.8681\cdot10^{-1}$\\
$\mathcal M_2$ & $\Sigma_3$ & $4.1097\cdot10^{-1}$ & $2.4373\cdot10^{-1}$\\
$\mathcal M_3$ & $\Sigma_4$ & $2.3043\cdot10^{-1}$ & $1.9434\cdot10^{-1}$\\
$\mathcal M_4$ & $\Sigma_5$ & $3.6701\cdot10^{-2}$ & $1.4991\cdot10^{-1}$\\
$\mathcal M_5$ & $\Sigma_6$ & $1.8705\cdot10^{-2}$ & $1.1166\cdot10^{-1}$\\
$\mathcal M_6$ & $\Sigma_7$ & $1.5424\cdot10^{-2}$ & $8.2914\cdot10^{-2}$\\
$\mathcal M_7$ & $\Sigma_8$ & $7.2318\cdot10^{-3}$ & $6.2106\cdot10^{-2}$\\
$\mathcal M_8$ & $\Sigma_9$ & $4.9068\cdot10^{-3}$ & $4.5513\cdot10^{-2}$\\
\hline
\end{tabular}
\end{table}

The feedback criterion is first satisfied on $\mathcal M_7$, whereas the spatial
indicator is still above its tolerance there. The coupled stopping
criterion therefore forces one further spatial refinement. On $\mathcal M_8$,
both
\begin{align*}
\eta_{{\rm fb},8}
&=
4.906771853226\cdot10^{-3}
<
10^{-2}\\
\eta_{{\rm space},8}
&=
4.551276149837\cdot10^{-2}
<
5\cdot10^{-2}
\end{align*}
are satisfied, and the adaptive computation terminates after constructing
$\Sigma_9$.

The final level contains $12149$ triangles and has
$48030$ velocity and $6298$ pressure degrees of freedom. Its temporary
uniform child $\mathcal M_8^+$ contains $48596$ triangles and is used
only for the final two-grid spatial assessment. The complete adaptive
computation from the open-loop initialization to the stopping decision
required $142.57$ minutes of wall-clock time.

\paragraph{Closed-loop wake suppression.}
To assess dynamically the final feedback associated with $\Sigma_9$, we use
an open-loop wake
eigenmode on $\mathcal M_8$ as the common initial perturbation for the open- and
closed-loop simulations. The perturbation is scaled to a maximum initial
speed of $10^{-1}$. 

\paragraph{Visualization.}
A visualization of the open-loop and closed-loop vorticity dynamics is
provided in the supplementary material. Starting from the same wake
perturbation, it illustrates the substantially faster decay of the wake
structure under the final feedback.

\section{Boundary-controlled elasticity}
\label{sec:numerics}

We next consider a damped linear elasticity problem on an L-shaped domain
with boundary traction control and boundary displacement observation. In
contrast to the cylinder-wake example, the performance observation is
bounded on the energy space, whereas the boundary control remains
unbounded. We again combine the Newton--Kleinman iteration with adaptive
spatial discretization, performing exactly one Newton--Kleinman update on
each production mesh.

\subsection{Model and system-node realization}
\label{subsec:num-model}

Consider the L-shaped domain
\[
\Omega
=
(-1,1)^2
\setminus
\bigl([0,1]\times[-1,0]\bigr),
\]
with
\[
\Gamma_{\rm D}
=
\setdef{(-1,s)}{-1<s<1},
\qquad
\Gamma_{\rm N}
=
\partial\Omega\setminus\overline{\Gamma_{\rm D}}.
\]

\begin{figure}[t]
\centering
\IfFileExists{elasticity_initial.png}{
  \begin{overpic}[width=.72\textwidth]{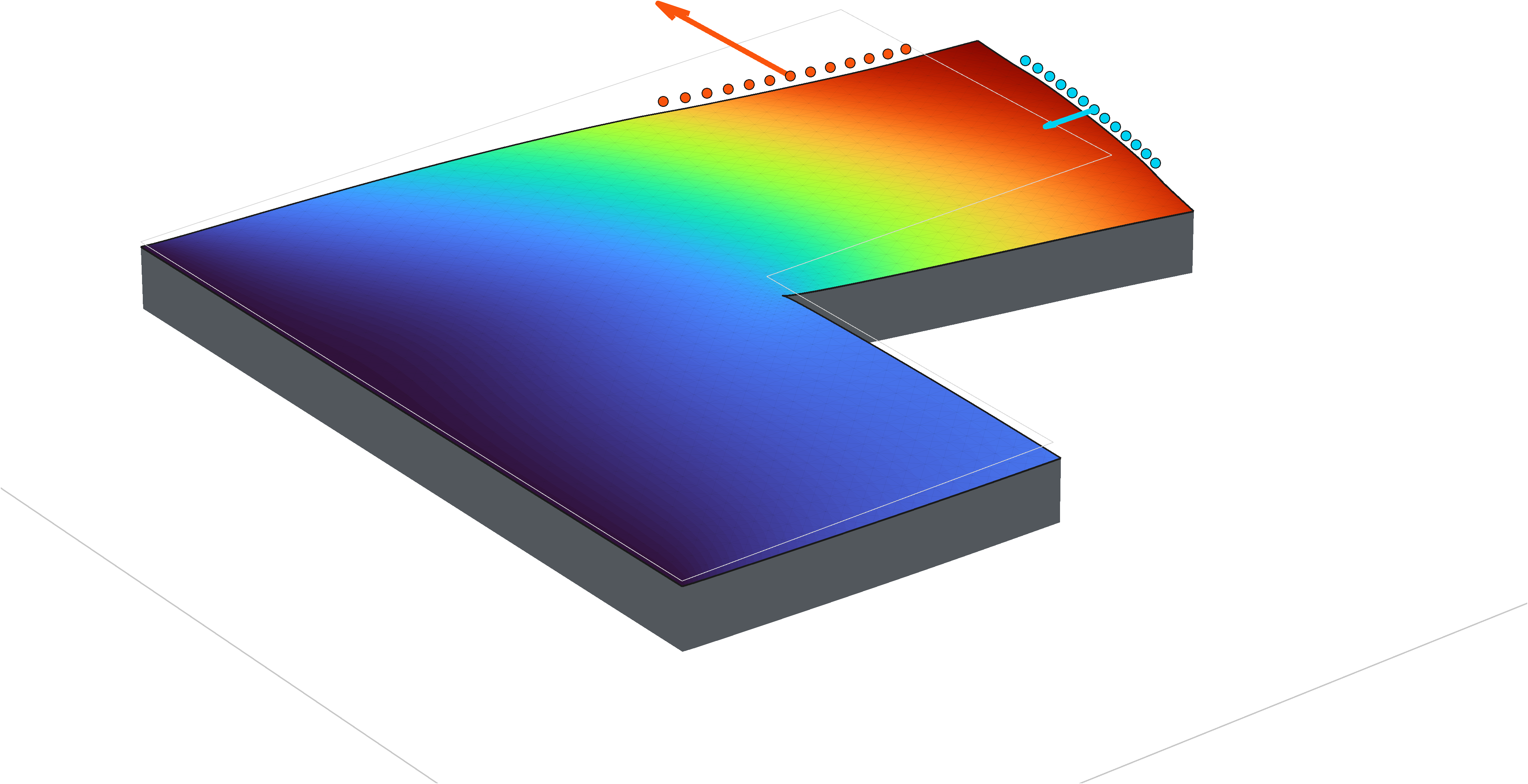}\definecolor{uone}{RGB}{0,170,170}
    \put(73,45){\color{uone}$u_1$}
    \put(44,53){\color{red}$u_2$}
  \end{overpic}
}{
  \fbox{
    \parbox[c][6cm][c]{.68\textwidth}{
      \centering
      \textbf{Boundary-controlled elasticity}\\[0.5em]
      Placeholder for the initial deformation of the L-shaped body.
    }
  }
}
\caption{Boundary-controlled elasticity configuration. The left boundary is
clamped, and two scalar traction inputs act on the indicated patches of the
right and upper boundary. The figure shows the common initial deformation
used for the open- and closed-loop transient simulations; the arrows show
the control forces generated by the final feedback at $t=0$. The deformation
is visually amplified by a factor $2.5$.}
\label{fig:elasticity-setup}
\end{figure}

Although the physical problem is real, we work with the complexification
of the state, input, and output spaces in order to remain within the
Hilbert-space framework used throughout the paper. For real initial data
and inputs, the corresponding solution and output remain real. The body is
homogeneous and isotropic, with displacement
$q:\Omega\times[0,\infty)\to\mathbb C^2$,
\[
\varepsilon(q)
=
\tfrac12\bigl(\nabla q+\nabla q^\top\bigr),
\qquad
\sigma(q)
=
2\mu\varepsilon(q)
+
\lambda_{\rm L}\operatorname{div}(q)I.
\]
Throughout the numerical experiment we use
\[
\mu=2,
\qquad
\lambda_{\rm L}=3,
\qquad
\rho=1.7,
\qquad
\eta=0.05.
\]
The body is clamped on $\Gamma_{\rm D}$ and actuated by two scalar
tractions on
\[
\Gamma_1^{\rm u}
=
\setdef{(1,s)}{0.2<s<0.8},
\qquad
\Gamma_2^{\rm u}
=
\setdef{(s,1)}{0.2<s<0.8}.
\]
Let
$g_j=|\Gamma_j^{\rm u}|^{-1/2}e_j$, $j=1,2$,
and, for $u=\spvek{u_1}{u_2}\in U:=\mathbb C^2$, set
\[
Gu
=
u_1g_1\chi_{\Gamma_1^{\rm u}}
+
u_2g_2\chi_{\Gamma_2^{\rm u}}.
\]
The dynamics are
\[
\begin{aligned}
\rho\ddot q-\operatorname{div}\sigma(q)&=0
&&\text{in }\Omega,\\
q&=0
&&\text{on }\Gamma_{\rm D},\\
\sigma(q)n+\eta\dot q&=Gu
&&\text{on }\Gamma_{\rm N}.
\end{aligned}
\]

The regulated output measures the boundary displacement over the same
boundary patches on which the traction actuators act, i.e.,
\[
y_{\rm pos}
=
G^*\gamma q
=
\pvek{
\displaystyle\int_{\Gamma_1^{\rm u}}g_1^*\gamma q\,ds
}{
\displaystyle\int_{\Gamma_2^{\rm u}}g_2^*\gamma q\,ds
}.
\]
We use the performance output
$y=2y_{\rm pos}$,
and therefore
\[
J(x_0,u)
=
\int_0^\infty
\left(
4\|y_{\rm pos}(t)\|_{\mathbb C^2}^2
+
\|u(t)\|_{\mathbb C^2}^2
\right)\,dt.
\]
With momentum density $p=\rho\dot q$, set
\[
H^1_{\Gamma_{\rm D}}(\Omega;\mathbb C^2)
=
\setdef{\phi\in H^1(\Omega;\mathbb C^2)}
{\phi|_{\Gamma_{\rm D}}=0},
\qquad
X
=
H^1_{\Gamma_{\rm D}}(\Omega;\mathbb C^2)
\times
L^2(\Omega;\mathbb C^2).
\]
By Korn's inequality \cite[Thm.~6.3-3]{Ciarlet1988},
\[
\left\|\pvek qp\right\|_X^2
=
\int_\Omega
\left(
2\mu\,\varepsilon(q):\varepsilon(q)
+
\lambda_{\rm L}|\operatorname{div}q|^2
+
\rho^{-1}|p|^2
\right)\,dx
\]
is an equivalent Hilbert-space norm on $X$; here,
$A:B$ is the trace of $A^*B$.

Let $\gamma$ denote the trace operator and let
$\gamma_n:H(\operatorname{div},\Omega;\mathbb C^{2\times2})
\to H^{-1/2}(\partial\Omega;\mathbb C^2)$
be the normal trace. For symmetric $T$,
\begin{equation}
\langle\operatorname{div}T,\varphi\rangle_{L^2(\Omega)}
+
\int_\Omega T:\varepsilon(\varphi)\,dx
=
\langle\gamma_nT,\gamma\varphi\rangle_{\partial\Omega},
\qquad
\varphi\in H^1_{\Gamma_{\rm D}}(\Omega;\mathbb C^2);
\label{eq:num-green}
\end{equation}
see \cite[Thm.~5.1 and Thm.~5.2]{GudoshnikovKrizek2025}, after
complexification.

Define
$\mathcal S_{\rm el}:
\dom(\mathcal S_{\rm el})\subset X\times U
\to X\times\mathbb C^2$
by
\begin{equation}
\begin{aligned}
\mathcal S_{\rm el}
\pvek{\spvek qp}{u}
&=
\pvek{
\displaystyle
\spvek{\rho^{-1}p}{\operatorname{div}\sigma(q)}
}{
2G^*\gamma q
},
\\[0.4em]
\dom(\mathcal S_{\rm el})
&=
\setdef{
\pvek{\spvek qp}{u}\in X\times U
}{
\begin{array}{l}
p\in H^1_{\Gamma_{\rm D}}(\Omega;\mathbb C^2),\\
\sigma(q)\in H(\operatorname{div},\Omega;\mathbb C^{2\times2}),\\
\gamma_n\sigma(q)+\rho^{-1}\eta\,\gamma p=Gu
\quad\text{on }\Gamma_{\rm N}
\end{array}
}.
\end{aligned}
\label{eq:num-node-el}
\end{equation}
Boundary equalities are understood weakly. The main operator is
\[
A\pvek qp
=
\pvek{\rho^{-1}p}{\operatorname{div}\sigma(q)}
\]
with
\[
\dom(A)
=
\setdef{
\pvek qp\in X
}{
\begin{array}{l}
p\in H^1_{\Gamma_{\rm D}}(\Omega;\mathbb C^2),\\
\sigma(q)\in H(\operatorname{div},\Omega;\mathbb C^{2\times2}),\\
\gamma_n\sigma(q)+\rho^{-1}\eta\,\gamma p=0
\quad\text{on }\Gamma_{\rm N}
\end{array}
}.
\]
To initialize the Newton--Kleinman iteration, we use the collocated
boundary-velocity feedback. Thus it suffices to verify the system-node
property of the open model and the infinite-time well-posedness and strong
stability of the corresponding closed-loop system. To this end, we
define
$C_{\rm v}\&D_{\rm v}
\in
\mathcal L\bigl(\dom(A\&B),\mathbb C^2\bigr)$
by
\[
C_{\rm v}\&D_{\rm v}
\pvek{\spvek qp}{u}
=
\rho^{-1}G^*\gamma p.
\]
For a fixed $\kappa>0$, set
$K_0\&L_0
=
-\kappa(C_{\rm v}\&D_{\rm v})$.
Introducing a new external input $v_0$, the feedback relation is written as
$v_0
=
u+\kappa\rho^{-1}G^*\gamma p$. In the numerical computation we use $\kappa=1$.

\begin{proposition}
\label{prop:num-system-node}
The operator $\mathcal S_{\rm el}$ in \eqref{eq:num-node-el} is a system
node on $(X,\mathbb C^2,\mathbb C^2)$. Moreover, $K_0\&L_0$ is an
admissible state feedback. The corresponding feedback system node is infinite-time well
posed, and its main operator $A_0$ generates a strongly stable contraction
semigroup on $X$.
\end{proposition}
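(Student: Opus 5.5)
The plan rests on the energy (power) balance of the elasticity node and the fact that the feedback $u=-\kappa\rho^{-1}G^*\gamma p+v_0$ is collocated with the traction input, so that closing the loop only adds dissipation.

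\emph{Step 1 (power balance).} For $\spvek{\spvek qp}{u}\in\dom(\mathcal S_{\rm el})$, I would apply Green's formula \eqref{eq:num-green} with $T=\sigma(q)$ and $\varphi=\rho^{-1}p\in H^1_{\Gamma_{\rm D}}$. Inserting the boundary condition $\gamma_n\sigma(q)=Gu-\rho^{-1}\eta\gamma p$ should give
\[
2\Ree\Bigl\langle\spvek qp,A\&B\pvek{\spvek qp}{u}\Bigr\rangle_X
=-2\eta\rho^{-2}\|\gamma p\|^2_{L^2(\Gamma_{\rm N})}
+2\Ree\langle u,\rho^{-1}G^*\gamma p\rangle_{\mathbb C^2}.
\]
Thus the node with collocated output $C_{\rm v}\&D_{\rm v}$ is impedance passive. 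For the closed loop with $u=v_0-\kappa y_{\rm v}$, where $y_{\rm v}=\rho^{-1}G^*\gamma p$, Young's inequality should yield
\[
\tfrac{\mathrm d}{\mathrm dt}\|x\|_X^2+\kappa\|y_{\rm v}\|^2\leq\kappa^{-1}\|v_0\|^2 .
\]

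\emph{Step 2 (system node and generators).} I would prove that $A$ (case $\kappa=0$) and the closed-loop main operator $A_0$ are m-dissipative and then apply Lumer--Phillips. Dissipativity follows from Step 1 with $u=0$, respectively $v_0=0$. For maximality, given $\lambda>0$ and $(f_1,f_2)\in X$, I would eliminate $p=\rho(\lambda q-f_1)$ and solve for $q\in H^1_{\Gamma_{\rm D}}$ a variational problem whose form is $\lambda^2\rho\langle q,\varphi\rangle+\int_\Omega\sigma(q):\overline{\varepsilon(\varphi)}$ plus a nonnegative boundary term $\lambda\rho^{-1}\langle(\eta+\kappa GG^*)\gamma q,\gamma\varphi\rangle$. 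This form is coercive by Korn's inequality, so Lax--Milgram applies. The same Lax--Milgram problem with Neumann datum $Gu$ provides, for each $u$, an element of the domain; this is condition (c) of Definition~\ref{def:prelim-system-node}. Closedness of $A\&B$ follows from the extrapolation description \eqref{eq:prelim-node-domain}. The output $2G^*\gamma q$ is bounded even on $X$ by the trace theorem, since $q\in H^1$. Hence $\mathcal S_{\rm el}$ is a system node. Since $C_{\rm v}\&D_{\rm v}$ is graph-norm bounded, the feedback relation is well defined, and the closed-loop operator is again a system node by the same resolvent computation. Admissibility of $K_0\&L_0$ in the sense of \cite[Sec.~7.3]{Staffans2005} then follows from the integrated dissipation estimate of Step 1. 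That estimate gives uniform-in-$T$ bounds for $x(T)$, $y_{\rm v}$ and $u=v_0-\kappa y_{\rm v}$ in terms of $\|x_0\|$ and $\|v_0\|_{L^2}$.

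\emph{Step 3 (strong stability).} The domain of $A_0$ embeds compactly into $X$: it has $p\in H^1$, and the $q$-component is controlled through the elliptic problem. Hence $A_0$ has compact resolvent. Suppose $A_0x=i\omega x$. Then dissipation forces $\gamma p=0$ on $\Gamma_{\rm N}$, and hence $\gamma_n\sigma(q)=0$ there. This gives zero Cauchy data on $\Gamma_{\rm N}$ for the Lamé system $-\rho\omega^2q=\operatorname{div}\sigma(q)$, and unique continuation yields $x=0$. The Arendt--Batty--Lyubich--V\~u theorem then gives strong stability of the contraction semigroup.

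\emph{Main obstacle.} The remaining and hardest point is the infinite-time $L^2$ bound on the position output $y=2G^*\gamma q$. Step 1 only controls the velocity trace, and strong stability alone does not make a bounded observation infinite-time admissible. I would first try a multiplier perturbation of the energy, $\|x\|^2+2\varepsilon\Ree\int_\Omega p\cdot\overline q$. If the kinetic term cannot be absorbed, I would instead aim at exponential stability of $\mathcal T_0$ via the Gearhart--Prüss/Huang criterion, i.e.\ a uniform bound on $(i\omega-A_0)^{-1}$ along $i\mathbb R$, obtained by contradiction with the same unique-continuation argument. Exponential stability would make every bounded observation infinite-time admissible and close the argument; the corners of the L-shaped domain are the place where I expect this step to need care.
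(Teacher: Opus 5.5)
Your Steps 1--3 follow the paper's argument closely. The paper gets admissibility of the collocated feedback and infinite-time well-posedness of the velocity channel by citing the impedance-passive feedback theorem of Hastir--Paunonen; your integrated dissipation estimate is an acceptable substitute, provided you check that your hand-built closed-loop node is the one produced by the feedback. The genuine gap is the one you flag yourself. Infinite-time well-posedness of the feedback node includes the performance output $y=2G^*\gamma q$, and you do not establish the $L^2(0,\infty)$ bounds for it. Neither fallback closes this:
\begin{itemize}
\item The perturbed energy $\|x\|^2+2\varepsilon\Ree\int_\Omega p\cdot\overline q$ produces an interior kinetic term $+\rho^{-1}\|p\|^2$ of the wrong sign. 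Removing it needs Rellich-type multipliers with tangential-trace and corner/junction regularity estimates, i.e.\ a full exponential-stability proof for boundary-damped elasticity on a nonsmooth domain with mixed conditions.
\item The Gearhart--Pr\"uss/Huang route fails as you describe it. For $|\omega_n|\to\infty$, $\|x_n\|=1$, $(i\omega_n-A_0)x_n\to0$, the sequence is unbounded in the graph norm, so there is no limiting eigenvector and unique continuation gives nothing at high frequencies.
\end{itemize}
Exponential stability is neither claimed nor needed.

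The missing idea is that the position output integrates the velocity output, combined with $0\in\rho(A_0)$. For $x=\spvek qp\in\dom(A_0)$ the first component of $A_0x$ is $\rho^{-1}p$. Hence $C_{\rm pos}A_0x=\rho^{-1}G^*\gamma p=C_{{\rm v},0}x$, where $C_{\rm pos}\spvek qp=G^*\gamma q$ is bounded on $X$ and $C_{{\rm v},0}$ is the closed-loop velocity observation. Since $A_0$ has compact resolvent and $\ker A_0=\{0\}$, we have $0\in\rho(A_0)$. Therefore $C_{\rm pos}\mathcal T_0(t)x_0=C_{{\rm v},0}\mathcal T_0(t)A_0^{-1}x_0$, and your bound $\kappa\int_0^\infty\|y_{\rm v}\|^2\,dt\leq\|x_0\|^2$ transfers to the position output. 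For the input--output part, the first row of the resolvent equation gives $\lambda q=\rho^{-1}p$ on the input lift, so $\mathbf G_{\rm v}(\lambda)=\lambda\,\mathbf G_{\rm pos}(\lambda)$ on $\Cplus$. Your energy estimate gives $\mathbf G_{\rm v}\in H^\infty(\Cplus;\mathcal L(U))$, and $\mathbf G_{\rm pos}$ is holomorphic near $\lambda=0$ because $0\in\rho(A_0)$; hence $\mathbf G_{\rm pos}\in H^\infty(\Cplus;\mathcal L(U))$.

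Note that your Step 3 does not actually give $\ker A_0=\{0\}$. For $\omega=0$ you only obtain $p=0$ and $\gamma_n\sigma(q)=0$ on $\Gamma_{\rm N}$, while $\gamma q$ need not vanish, so unique continuation does not apply. Instead, test $\operatorname{div}\sigma(q)=0$ with $q$ and use Korn's inequality to get $a(q,q)=0$, hence $q=0$. This case is exactly what supplies $0\in\rho(A_0)$.
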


\begin{proof}
Put
\[
a(q,\varphi)
=
\int_\Omega \sigma(q):\varepsilon(\varphi)\,dx.
\]
By Korn's inequality \cite[Thm.~6.3-3]{Ciarlet1988}, $a$ is coercive on $H^1_{\Gamma_{\rm D}}(\Omega;\mathbb C^2)$.

We first verify the system-node property of $\mathcal S_{\rm el}$. The state-control operator
\[
A\&B:\dom(A\&B)\subset X\times U\rightarrow X
\]
is closed. Indeed, convergence in its graph norm gives convergence of
$q$ in $H^1_{\Gamma_{\rm D}}(\Omega;\mathbb C^2)$, of $p$ in
$H^1_{\Gamma_{\rm D}}(\Omega;\mathbb C^2)$, and of
$\operatorname{div}\sigma(q)$ in $L^2(\Omega;\mathbb C^2)$.
Since the map
$q\mapsto \sigma(q)$
is bounded from
$H^1_{\Gamma_{\rm D}}(\Omega;\mathbb C^2)$ into
$L^2(\Omega;\mathbb C^{2\times2})$, it follows that
$\sigma(q)$ converges in
$H(\operatorname{div},\Omega;\mathbb C^{2\times2})$.
Continuity of the trace and normal trace then permits passage to the
limit in the weak boundary condition.

The input projection of $\dom(A\&B)$ is all of $U$. In fact, for every
$u\in U$, the variational problem
\[
a(q_u,\varphi)
=
\langle Gu,\gamma\varphi\rangle_{L^2(\Gamma_{\rm N})},
\qquad
\varphi\in H^1_{\Gamma_{\rm D}}(\Omega;\mathbb C^2),
\]
has a unique solution by Lax--Milgram; equivalently, see
\cite[Thm.~5.4 and Prop.~5.3]{GudoshnikovKrizek2025}.
The corresponding pair with $p=0$ belongs to $\dom(A\&B)$ and has input
component $u$.

The output map of $\mathcal S_{\rm el}$,
\[
C\&D\pvek{\spvek qp}{u}
=
2G^*\gamma q,
\]
is bounded already with respect to the state norm, by the trace theorem
\cite[Thm.~7.39]{AdamsFournier2003} and Korn's inequality. It remains
to verify generation of the main operator $A$.

For $x=\spvek qp\in\dom(A)$, Green's formula
\eqref{eq:num-green} and the homogeneous boundary condition give
\[
\Ree\langle Ax,x\rangle_X
=
-\eta
\|\rho^{-1}\gamma p\|_{L^2(\Gamma_{\rm N})}^2
\leq0.
\]
For $\alpha>0$, eliminating $p=\rho(\alpha q-f)$ from
\[
(\alpha I-A)\spvek qp=\spvek fg
\]
gives
\begin{multline*}
a(q,\varphi)
+
\rho\alpha^2\langle q,\varphi\rangle_{L^2(\Omega)}
+
\eta\alpha
\langle\gamma q,\gamma\varphi\rangle_{L^2(\Gamma_{\rm N})}
\\
=
\rho\alpha\langle f,\varphi\rangle_{L^2(\Omega)}
+
\langle g,\varphi\rangle_{L^2(\Omega)}
+
\eta\langle\gamma f,\gamma\varphi\rangle_{L^2(\Gamma_{\rm N})}.
\end{multline*}
Lax--Milgram yields a unique $q$. Green's formula then recovers the
boundary condition, so that $\ran(\alpha I-A)=X$. Hence $A$ is maximal
dissipative, and the Lumer--Phillips theorem
\cite[Thm.~II.3.15]{EngelNagel2000} shows that $A$ generates a
contraction semigroup. Together with the properties established above,
this proves that $\mathcal S_{\rm el}$ is a system node.

We next consider the system node with the same state-control operator
$A\&B$ and the collocated boundary-velocity output $C_{\rm v}\&D_{\rm v}$.
The latter is bounded with respect to the graph norm of $A\&B$: the first
state component of
\[
A\&B\pvek{\spvek qp}{u}
\]
is $\rho^{-1}p\in H^1_{\Gamma_{\rm D}}(\Omega;\mathbb C^2)$, and the
trace theorem \cite[Thm.~7.39]{AdamsFournier2003} gives the required
bound. Moreover, Green's formula yields
\begin{equation}
\Ree
\left\langle
(A\&B)\spvek{x}{u},x
\right\rangle_X
=
-\eta
\|\rho^{-1}\gamma p\|_{L^2(\Gamma_{\rm N})}^2
+
\Ree
\left\langle
u,
(C_{\rm v}\&D_{\rm v})\spvek{x}{u}
\right\rangle_U .
\label{eq:num-velocity-passivity}
\end{equation}
Thus this auxiliary system node is impedance passive. By the
infinite-time feedback result for impedance-passive system nodes
\cite[Thm.~2.5]{HastirPaunonen2025}, the feedback
\[
K_0\&L_0=-\kappa(C_{\rm v}\&D_{\rm v})
\]
is admissible and the corresponding system node with auxiliary input
$v_0$ is infinite-time well posed.

Its main operator is
\begin{align*}
A_0\pvek qp
&=
\pvek{\rho^{-1}p}{\operatorname{div}\sigma(q)},\\
\dom(A_0)
&=
\setdef{
\pvek qp\in X
}{
\begin{array}{l}
p\in H^1_{\Gamma_{\rm D}}(\Omega;\mathbb C^2),\\
\sigma(q)\in H(\operatorname{div},\Omega;\mathbb C^{2\times2}),\\
\gamma_n\sigma(q)
+
\rho^{-1}\eta\,\gamma p
+
\kappa\rho^{-1}GG^*\gamma p
=0
\;\text{ on }\Gamma_{\rm N}
\end{array}
}.
\end{align*}
Setting $v_0=0$ in \eqref{eq:num-velocity-passivity} gives
\begin{equation}
\Ree\langle A_0x,x\rangle_X
=
-\eta
\|\rho^{-1}\gamma p\|_{L^2(\Gamma_{\rm N})}^2
-
\kappa
\left\|
C_{\rm v}\&D_{\rm v}
\pvek{x}{-\kappa(C_{\rm v}\&D_{\rm v})\spvek{x}{0}}
\right\|_U^2.
\label{eq:num-feedback-dissipation}
\end{equation}
By definition of $C_{\rm v}\&D_{\rm v}$,
\[
(C_{\rm v}\&D_{\rm v})
\pvek{\spvek qp}{u}
=
\rho^{-1}G^*\gamma p.
\]
Hence, for $x\in\dom(A_0)$, the feedback relation
\[
u=-\kappa
(C_{\rm v}\&D_{\rm v})
\spvek{x}{u}
\]
and \eqref{eq:num-velocity-passivity} give
\[
\Ree\langle A_0x,x\rangle_X
=
-\eta
\|\rho^{-1}\gamma p\|_{L^2(\Gamma_{\rm N})}^2
-
\kappa
\|\rho^{-1}G^*\gamma p\|_U^2.
\]
In particular, the semigroup generated by $A_0$ is contractive.

We show that it is strongly stable. First, $A_0$ has compact resolvent.
Writing
$D_0=\eta I+\kappa GG^*$
on $L^2(\Gamma_{\rm N};\mathbb C^2)$, the resolvent equation
\[
(\alpha I-A_0)\spvek qp=\spvek fg
\]
reduces to
\begin{multline}
a(q,\varphi)
+
\rho\alpha^2\langle q,\varphi\rangle_{L^2(\Omega)}
+
\alpha
\langle D_0\gamma q,\gamma\varphi\rangle_{L^2(\Gamma_{\rm N})}
\\
=
\rho\alpha\langle f,\varphi\rangle_{L^2(\Omega)}
+
\langle g,\varphi\rangle_{L^2(\Omega)}
+
\langle D_0\gamma f,\gamma\varphi\rangle_{L^2(\Gamma_{\rm N})}.
\label{eq:num-feedback-resolvent}
\end{multline}
For a bounded sequence $(f_n,g_n)$ in $X$, the compact embedding
\[
H^1_{\Gamma_{\rm D}}(\Omega;\mathbb C^2)
\hookrightarrow
L^2(\Omega;\mathbb C^2)
\]
and the compactness of the trace embedding into
$L^2(\Gamma_{\rm N};\mathbb C^2)$, together with the continuity of
\[
L^2(\Omega;\mathbb C^2)
\hookrightarrow
\bigl(H^1_{\Gamma_{\rm D}}(\Omega;\mathbb C^2)\bigr)',
\]
show that the right-hand sides of
\eqref{eq:num-feedback-resolvent} are precompact in
$\bigl(H^1_{\Gamma_{\rm D}}(\Omega;\mathbb C^2)\bigr)'$; see
\cite[Thm.~6.3 and Thm.~7.39]{AdamsFournier2003}.
Coercivity then yields convergence of $q_n$ in
$H^1_{\Gamma_{\rm D}}(\Omega;\mathbb C^2)$, and
$p_n=\rho(\alpha q_n-f_n)$
converges in $L^2(\Omega;\mathbb C^2)$. Hence
$(\alpha I-A_0)^{-1}$ is compact.

Suppose now that
\[
A_0\spvek qp=i\omega\spvek qp.
\]
Equation \eqref{eq:num-feedback-dissipation} gives
$\gamma p=0$ on $\Gamma_{\rm N}$.
If $\omega=0$, then $p=0$, and testing the equilibrium equation with
$q$ gives $a(q,q)=0$. Korn's inequality therefore yields $q=0$.

Let $\omega\neq0$. The first component of the eigenvalue equation gives
$p=i\rho\omega q$,
and hence
$\gamma q=0$ on $\Gamma_{\rm N}$.
The boundary condition for $A_0$ then also gives
$\gamma_n\sigma(q)=0$
on $\Gamma_{\rm N}$,
whereas the second component yields
\[
-\operatorname{div}\sigma(q)=\rho\omega^2q.
\]
Thus both Cauchy data of $q$ vanish on a nonempty straight segment of
$\Gamma_{\rm N}$. Extension by zero across this segment, Holmgren
uniqueness for the constant-coefficient Lam\'e system, and analytic
continuation imply $q=0$; compare
\cite[Sec.~1]{DiaoLiuWang2020}. Consequently $p=0$ as well.

We have therefore proved
$\ker(i\omega I-A_0)=\{0\}$,
$\omega\in\mathbb R$.
Since $A_0$ has compact resolvent, every spectral point of $A_0$ is an
eigenvalue; see
\cite[Cor.~IV.1.19]{EngelNagel2000}. Hence
\[
\sigma(A_0)\cap i\mathbb R=\varnothing.
\]
The semigroup generated by $A_0$ is contractive and therefore bounded.
Since $\sigma(A_0^*)=\overline{\sigma(A_0)}$, the
Arendt--Batty--Lyubich--V{\~u} theorem
\cite[Thm.~V.2.21]{EngelNagel2000}
now yields strong stability. In particular,
$0\in\rho(A_0)$.

It remains to pass from the auxiliary velocity output to the position
output of $\mathcal S_{\rm el}$. Put
\[
C_{\rm pos}\spvek qp=G^*\gamma q.
\]
The first component of $A_0$ gives
\[
C_{\rm pos}A_0x
=
(C_{\rm v}\&D_{\rm v})
\pvek{x}{(K_0\&L_0)\spvek{x}{0}},
\qquad
x\in\dom(A_0).
\]
Since $D_{\rm v}=0$, this reduces to
$C_{\rm pos}A_0x=\rho^{-1}G^*\gamma p$,
and therefore
\begin{equation}
C_{\rm pos}
=
C_{{\rm v},0}A_0^{-1},
\label{eq:num-feedback-position-velocity}
\end{equation}
where $C_{{\rm v},0}$ denotes the observation operator of the feedback-transformed
auxiliary system. Since $C_{{\rm v},0}$ is infinite-time admissible,
\eqref{eq:num-feedback-position-velocity} and $0\in\rho(A_0)$ imply
infinite-time admissibility of $C_{\rm pos}$.

Finally, let $\mathbf G_{\rm v}$ and $\mathbf G_{\rm pos}$ denote the
zero-state transfer functions of the feedback-transformed system with outputs
$C_{{\rm v},0}$ and $C_{\rm pos}$, respectively. The first resolvent
component gives
\[
\mathbf G_{\rm v}(\lambda)
=
\lambda\mathbf G_{\rm pos}(\lambda),
\qquad
\lambda\in\mathbb C_+.
\]
Infinite-time well-posedness of the auxiliary system node yields
\[
\mathbf G_{\rm v}
\in
H^\infty(\mathbb C_+;\mathcal L(U)).
\]
On the other hand, $0\in\rho(A_0)$ implies that
$\mathbf G_{\rm pos}$ extends holomorphically to a neighbourhood of the
origin. Hence the preceding identity implies
\[
\mathbf G_{\rm pos}
\in
H^\infty(\mathbb C_+;\mathcal L(U)).
\]
Thus the zero-state input-output operator associated with $C_{\rm pos}$
is bounded and defined on $L^2(0,\infty;U)$. Together with the infinite-time
input-state bound of the auxiliary system node and the infinite-time
admissibility of $C_{\rm pos}$, this proves infinite-time well-posedness
of the feedback-transformed system with output $2C_{\rm pos}=2G^*\gamma q$.
\end{proof}

\subsection{Shifted-resolvent discretization}
\label{subsec:num-resolvent}

Since the performance observation $2G^*\gamma q$ is bounded on the energy
space, no separate continuous adjoint system-node realization is needed for
the numerical implementation. Instead, after Galerkin discretization the
adjoint resolvent actions required by ADI are obtained directly from the
transpose of the discrete energy representation.

We use conforming vector-valued piecewise affine finite elements. After
elimination of the clamped degrees of freedom, let
$K_{\varepsilon,h}$, $K_{\operatorname{div},h}$, $M_h$, and $D_h$
denote the matrices associated with the strain, divergence, volume
$L^2$, and boundary $L^2$ forms, respectively, and set
\[
K_h
=
2\mu K_{\varepsilon,h}
+
\lambda_{\rm L}K_{\operatorname{div},h}.
\]
Let $B_h$ denote the matrix of the boundary traction functional associated
with $G$. Since the displacement observation uses the same boundary
profiles as the input, its discrete physical output is
$y_{{\rm pos},h}=B_h^*q_h$.
Thus the performance output is represented by
\[
C_h
=
2
\begin{bmatrix}
B_h^*&0
\end{bmatrix}.
\]
For the coefficient state
$x_h=\spvek{q_h}{p_h}$,
define
\[
H_h
=
\begin{bmatrix}
K_h&0\\
0&\rho^{-1}M_h
\end{bmatrix},
\qquad
\mathcal A_h
=
\begin{bmatrix}
0&\rho^{-1}K_h\\
-\rho^{-1}K_h&-\rho^{-2}\eta D_h
\end{bmatrix},
\]
and
\[
\mathcal B_h
=
\rho^{-1}\bvek{0}{B_h}.
\]
The Galerkin equations can then be written in the generalized form
\[
H_h\dot x_h
=
\mathcal A_hx_h+\mathcal B_hu,
\qquad
y_h=C_hx_h.
\]
Here $H_h$ is the Gram matrix of the discrete mechanical-energy inner
product. This first-order representation is used only for the algebraic
description of the resolvents and feedback relations; the shifted PDE
solves themselves are carried out in second-order form.

Indeed, for $\lambda\in\mathbb C_+$ the open input resolvent
\[
(\lambda H_h-\mathcal A_h)
\spvek{q_h}{p_h}
=
\mathcal B_hu
\]
reduces to
$p_h=\rho\lambda q_h$
and
\begin{equation}
S_h(\lambda)q_h
=
B_hu,
\qquad
S_h(\lambda)
:=
K_h+\eta\lambda D_h+\rho\lambda^2M_h.
\label{eq:num-shifted-primal}
\end{equation}
Thus every primal input-resolvent application requires only a sparse solve
with the second-order shifted elasticity matrix $S_h(\lambda)$.

The same matrix is used for the adjoint resolvent actions. Consider a
general coefficient-space right-hand side
\[
(\lambda H_h-\mathcal A_h^*)
\spvek{r_h}{s_h}
=
\spvek{R_{q,h}}{R_{p,h}}.
\]
Putting
$\zeta_h=K_h^{-1}R_{q,h}$,
the first block row gives
$s_h
=
\rho\zeta_h-\rho\lambda r_h$,
and substitution into the second block row yields
\begin{equation}
S_h(\lambda)r_h
=
\bigl(\rho\lambda M_h+\eta D_h\bigr)\zeta_h
-
\rho R_{p,h}.
\label{eq:num-shifted-adjoint-general}
\end{equation}
Consequently, the adjoint actions required by ADI use exactly the same
second-order shifted matrix
\[
S_h(\lambda)
=
K_h+\eta\lambda D_h+\rho\lambda^2M_h
\]
as the primal input resolvent. In particular, for the displacement
performance output,
\[
C_h^*z
=
\spvek{2B_hz}{0},
\]
so its adjoint action is handled by
\eqref{eq:num-shifted-adjoint-general}; no artificial boundary-velocity
observation is introduced.

\subsection{Adaptive Newton--Kleinman realization}
\label{subsec:num-adaptive-nk}

\paragraph{Low-rank feedback evaluation and update.}

The current feedback is stored as an implicit relation
\[
v_k
=
\Gamma_{k,h}x_h+\Lambda_{k,h}u,
\qquad
\Lambda_{k,h}\in\mathbb C^{2\times2},
\]
rather than as a dense state-feedback matrix. Setting $v_k=0$ gives
\[
u
=
F_{k,h}x_h,
\qquad
F_{k,h}
=
-\Lambda_{k,h}^{-1}\Gamma_{k,h}.
\]
For the initial collocated boundary-velocity feedback,
\[
\Gamma_{0,h}
=
\kappa\mathcal B_h^*,
\qquad
\Lambda_{0,h}=I,
\qquad
\kappa=1.
\]

The feedback-constrained primal input resolvent is evaluated without
assembling a closed-loop first-order matrix. Let
\[
G_h(\lambda)
=
(\lambda H_h-\mathcal A_h)^{-1}\mathcal B_h,
\]
where the two columns of $G_h(\lambda)$ are obtained from
\eqref{eq:num-shifted-primal}. For prescribed auxiliary input $v_k$, the
feedback relation gives
\[
\bigl(
\Lambda_{k,h}
+
\Gamma_{k,h}G_h(\lambda)
\bigr)u
=
v_k.
\]
Hence only the $2\times2$ matrix
\[
Q_{k,h}(\lambda)
=
\Lambda_{k,h}
+
\Gamma_{k,h}G_h(\lambda)
\]
has to be inverted, after which
\[
u=Q_{k,h}(\lambda)^{-1}v_k,
\qquad
x_h=G_h(\lambda)u.
\]

The corresponding closed-loop adjoint resolvent is treated in the same
way. For a general right-hand side $R$, we solve
\begin{equation}
\begin{aligned}
(\lambda H_h-\mathcal A_h^*)X+\Gamma_{k,h}^*Y&=R,\\
\Lambda_{k,h}^*Y&=\mathcal B_h^*X.
\end{aligned}
\label{eq:num-feedback-adjoint-system}
\end{equation}
Let
\[
X_R
=
(\lambda H_h-\mathcal A_h^*)^{-1}R,
\qquad
X_\Gamma
=
(\lambda H_h-\mathcal A_h^*)^{-1}\Gamma_{k,h}^*.
\]
Both are computed through the second-order reduction
\eqref{eq:num-shifted-adjoint-general}. Eliminating the two-dimensional
port variable gives
\[
Y
=
\bigl(
\Lambda_{k,h}^*
+
\mathcal B_h^*X_\Gamma
\bigr)^{-1}
\mathcal B_h^*X_R,
\qquad
X=X_R-X_\Gamma Y.
\]
Thus every closed-loop adjoint resolvent requires sparse second-order
elasticity solves together with an inversion of a $2\times2$ matrix.

For every current feedback we evaluate its cost operator directly. The
zero-input performance observation is
\[
Z_{k,h}
=
\begin{bmatrix}
C_h\\
F_{k,h}
\end{bmatrix},
\]
because the quadratic cost contains both the displacement output and the
physical control input. Accordingly,
\[
P_{k+1}
=
\mathcal C_{k,\infty}^*
\mathcal C_{k,\infty}
\]
is approximated by the low-rank ADI iteration from
Subsection~\ref{subsec:nk-computational}. No decrement Gramian is used in
the numerical realization. If $Z_{P,k+1}$ denotes the compressed ADI
factor, the discrete value action is represented as
\[
P_{k+1,h}x_h
\approx
Z_{P,k+1}Z_{P,k+1}^*H_hx_h.
\]
This action is used directly in the subsequent Newton--Lur'e update with
the fixed normalization parameter
\[
\alpha=1.
\]

Since all finite-element matrices are real, nonreal ADI shifts are processed
in complex-conjugate pairs. For a conjugate pair, the two scalar ADI
updates are combined algebraically into one real block update, so that only
one complex shifted elasticity solve is required. The resulting low-rank
factor and residual factor are real up to roundoff.

\paragraph{ADI shifts and stopping.}

The shift candidates are recomputed at every production level for the
current feedback. To avoid spectral computations on the growing adaptive
meshes, the current implicit feedback relation is restricted to a fixed
coarse elasticity model with $150$ triangles. The complete spectrum of the
corresponding coarse closed-loop problem is computed and used as the
candidate set for a continued Penzl-type shift selection
\cite{Penzl2000,BennerMenaSaak2008}. No fine-grid spectral data and no
stored shift sequence from a preceding Newton step are used.

ADI convergence is monitored by a resolvent-weighted residual at the fixed
point
\[
\beta=1.
\]
Let $W_j$ denote the ADI residual factor after a completed real or
complex-conjugate macro step, and let $U_j$ be the solution of the
feedback-constrained adjoint system
\eqref{eq:num-feedback-adjoint-system} with
$\lambda=\beta$ and right-hand side $R=W_j$. We use
\[
\rho_{\beta,j}
=
\frac{
\|U_j^*H_hU_j\|_{\rm F}
}{
\|U_0^*H_hU_0\|_{\rm F}
}.
\]
At adaptive level $\ell=0,1,\ldots$, the ADI tolerance is
\[
\tau_{{\rm ADI},\ell}
=
\max\left\{
2\cdot10^{-5},
10^{-3}2^{-\ell}
\right\}.
\]
The low-rank factor is subsequently compressed in the mechanical-energy
metric with tolerance
\[
\tau_{{\rm comp},\ell}
=
\min\left\{
5\cdot10^{-3},
10\tau_{{\rm ADI},\ell}
\right\}.
\]
The ADI iteration is terminated solely by
\[
\rho_{\beta,j}
\leq
\tau_{{\rm ADI},\ell}.
\]
The resulting direct feedback-cost factor is then used for exactly one
Newton--Klein\-man update on the current production mesh. The spatial
refinement and the coupled outer stopping criteria are described next.

\paragraph{Spatial adaptivity and coupled stopping.}

The initial mesh generated with mesh width $h_0=0.2$ consists of $150$
triangles. Before the adaptive Newton--Kleinman iteration is started, we
perform three uniform newest-vertex-bisection refinement cycles. Thus the
first production mesh, denoted by $T_3$, contains $1200$ triangles and has
first-order state dimension $2480$. All subsequent refinements are local.

\paragraph{Residual-based marking.}
The spatial refinement indicator is obtained directly from the shifted
elasticity solves already carried out during the current direct
feedback-cost ADI computation. Hence no additional family of PDE solves is
introduced solely for mesh marking.

To describe the indicator, consider one scalar member of an ADI macro step,
with shift $p\in\mathbb C_-$ and corresponding right-half-plane parameter
$\lambda=-p$.
For a general adjoint state-space right-hand side
$(R_{q,h},R_{p,h})$, put
\[
z_h=K_h^{-1}R_{q,h},
\qquad
M_hw_{p,h}=\rho R_{p,h}.
\]
The displacement component $r_h$ of the shifted adjoint solution then
satisfies
\[
\bigl(
K_h+\eta\lambda D_h+\rho\lambda^2M_h
\bigr)r_h
=
\bigl(
\rho\lambda M_h+\eta D_h
\bigr)z_h
-
\rho R_{p,h}.
\]
For an element $T$, an interior edge $E$, and an edge
$E\subset\Gamma_{\rm N}$, respectively, we use the residuals
\[
\begin{aligned}
R_T
&=
\rho\lambda z_h-w_{p,h}
+\operatorname{div}\sigma(r_h)
-\rho\lambda^2r_h,
\\
J_E
&=
[\sigma(r_h)n_E],
\\
R_E^{\rm N}
&=
\eta z_h
-\sigma(r_h)n
-\eta\lambda r_h.
\end{aligned}
\]
The local squared contribution is
\begin{equation}
\eta_{T,p}^2
=
h_T^2\|R_T\|_{L^2(T)}^2
+
\frac12
\sum_{\substack{E\subset\partial T\\E\ {\rm interior}}}
h_E\|J_E\|_{L^2(E)}^2
+
\sum_{E\subset\partial T\cap\Gamma_{\rm N}}
h_E\|R_E^{\rm N}\|_{L^2(E)}^2.
\label{eq:num-spatial-local}
\end{equation}
The contributions in \eqref{eq:num-spatial-local} are summed over all
right-hand-side columns of the ADI block. For a complex-conjugate ADI
macro step, both scalar members of the pair are included in this sum.

The element contributions accumulated over the complete feedback-cost ADI
solve are
\[
\eta_T^2
=
\sum_m
\bigl(-2\Ree p_m\bigr)
\eta_{T,p_m}^2,
\]
where a complex-conjugate pair is regarded as one macro step and
$\eta_{T,p_m}^2$ already contains the contributions of both members.
The quantity
\[
\eta_{\rm res}
=
\frac{
\left(\sum_T\eta_T^2\right)^{1/2}
}{
\|H_h^{1/2}Z_{P,k+1}^{\rm raw}\|_{\rm HS}
}
\]
is recorded as a normalized spatial diagnostic. It is not used as an
outer stopping criterion.

After every completed Newton--Kleinman update, refinement is mandatory.
We use D\"orfler marking \cite{Doerfler1996}: a minimal set of elements
$\mathcal A_k$ is selected such that
\[
\sum_{T\in\mathcal A_k}\eta_T^2
\geq
\theta
\sum_T\eta_T^2,
\qquad
\theta=0.5.
\]
The marked elements are then refined by newest-vertex bisection. Thus every
production mesh carries exactly one Newton--Kleinman update, followed by
one spatial refinement before any outer stopping decision is made.

\paragraph{Transfer of the current feedback.}
Only the current feedback relation is transferred to the refined mesh.
No low-rank value factor, previous Gramian, ADI shift sequence, or
closed-loop generator is transferred.

Let
$J:X_{h_{\rm c}}\rightarrow X_{h_{\rm f}}$
denote the nested $P_1$ injection from the current state space into the
refined state space and let $H_{\rm c}$ and $H_{\rm f}$ be the
corresponding mechanical-energy Gram matrices. We use the
$H_{\rm f}$-orthogonal projection
\[
R
=
\bigl(J^*H_{\rm f}J\bigr)^{-1}J^*H_{\rm f},
\qquad
RJ=I.
\]
If the current implicit feedback relation is
$\Gamma_{\rm c}x_{\rm c}
+
\Lambda_{\rm c}u
=
0$,
its transfer to the refined mesh is defined by
\begin{equation}
\Gamma_{\rm f}
=
\Gamma_{\rm c}R,
\qquad
\Lambda_{\rm f}
=
\Lambda_{\rm c}.
\label{eq:num-feedback-transfer}
\end{equation}
Consequently,
\[
\Gamma_{\rm f}Jx_{\rm c}
+
\Lambda_{\rm f}u
=
\Gamma_{\rm c}x_{\rm c}
+
\Lambda_{\rm c}u,
\]
so the transferred feedback agrees with the coarse feedback on the
injected coarse state space. On the next production mesh the feedback-cost
operator is computed anew by direct LR--ADI.

\paragraph{Newton convergence.}
The mesh changes after every Newton step. A direct norm of the complete
feedback difference would therefore mix the actual Newton correction with
new fine-grid directions that were not present on the preceding mesh.
We consequently measure Newton convergence on the inherited state
subspace.

For an implicit feedback relation
$\Gamma x+\Lambda u=0$, write
$F=-\Lambda^{-1}\Gamma$
and, for a row operator $L$ on a discrete state space with energy matrix
$H$, define
\[
\|L\|_{H^{-1}}
=
\sqrt{
\lambda_{\max}\bigl(LH^{-1}L^*\bigr)
}.
\]
Let $F_k^{\rm in}$ be the feedback entering the Newton step on the current
production mesh and let $F_k^{\rm new}$ be the feedback after the update.
If
$J_{k-1,k}:X_{h_{k-1}}\to X_{h_k}$
denotes the nested injection from the preceding production mesh, we define
\[
\eta_{{\rm N},k}
=
\frac{
\|
(F_k^{\rm new}-F_k^{\rm in})
J_{k-1,k}
\|_{H_{k-1}^{-1}}
}{
\|
F_k^{\rm in}J_{k-1,k}
\|_{H_{k-1}^{-1}}
}.
\]
Thus $\eta_{{\rm N},k}$ measures the Newton correction only in directions
already represented on the preceding production mesh. The complementary
correction in newly created fine-grid directions is monitored separately
as a diagnostic but does not enter the stopping rule. No Newton indicator
is assigned to the first production level.

\paragraph{Post-refinement two-grid validation.}
After the mandatory refinement, the newly computed feedback is transferred
to the child mesh by \eqref{eq:num-feedback-transfer}. We then compare the
parent and child discretizations with exactly this same feedback. This
validation performs neither an additional Newton step nor an ADI
iteration.

Let
\[
C_{{\rm perf},h}
=
\begin{bmatrix}
C_h\\
F_h
\end{bmatrix}
\]
denote the zero-input performance observation for the fixed feedback, so
that its four components consist of the two displacement outputs and the
two physical control inputs. Let
\[
\omega_k
=
\max_j|\operatorname{Im}p_j|
\]
be the largest imaginary part among the ADI shifts actually used in the
feedback evaluation on the parent mesh. The two-grid check uses the six
sampling points
\[
\Lambda_{{\rm grid},k}
=
\left\{
0.1,\,
1,\,
0.1+\tfrac14 i\omega_k,\,
0.1+\tfrac12 i\omega_k,\,
0.1+\tfrac34 i\omega_k,\,
0.1+i\omega_k
\right\}.
\]
For $\lambda\in\Lambda_{{\rm grid},k}$, let $X_h(\lambda)$ denote the
solution of the closed-loop adjoint resolvent problem
\begin{equation}
\bigl(
\lambda H_h-\mathcal A_{F,h}^*
\bigr)
X_h(\lambda)
=
C_{{\rm perf},h}^*,
\label{eq:num-grid-adjoint}
\end{equation}
where \eqref{eq:num-grid-adjoint} is evaluated through the implicit
feedback relation and the second-order shifted solves described above;
the closed-loop matrix $\mathcal A_{F,h}$ is not assembled. We form the
small Gram matrix
\[
G_h(\lambda)
=
X_h(\lambda)^*H_hX_h(\lambda).
\]
If $G_{\rm c}(\lambda)$ and $G_{\rm f}(\lambda)$ denote these matrices on
the parent and child meshes, respectively, the spatial validation quantity
is
\[
\eta_{{\rm grid},k}
=
\frac{
\left(
\displaystyle
\sum_{\lambda\in\Lambda_{{\rm grid},k}}
\|
G_{\rm f}(\lambda)-G_{\rm c}(\lambda)
\|_{\rm F}^2
\right)^{1/2}
}{
\left(
\displaystyle
\sum_{\lambda\in\Lambda_{{\rm grid},k}}
\|
G_{\rm f}(\lambda)
\|_{\rm F}^2
\right)^{1/2}
}.
\]
This is a sampled same-feedback two-grid diagnostic rather than a rigorous
uniform discretization-error bound. Pointwise relative differences at the
individual sampling points are recorded only diagnostically.

\paragraph{Coupled stopping rule.}
We use
\begin{equation}
\eta_{{\rm N},k}
\leq
5\cdot10^{-3},
\qquad
\eta_{{\rm grid},k}
\leq
5\cdot10^{-3}.
\label{eq:num-outer-stop}
\end{equation}
The computation is terminated only if both conditions hold after the
mandatory refinement of the same production level. In that case the parent
mesh together with the feedback computed there is accepted as the final
production result, while the refined child is used only for the final
two-grid validation. If either condition fails, the child becomes the next
production mesh, carrying only the transferred current feedback, and the
next direct feedback-cost ADI solve and Newton--Kleinman update are
performed there.

Thus the outer iteration has the fixed structure
\[
\boxed{
\begin{gathered}
\text{ADI feedback evaluation}
\rightarrow
\text{Newton--Kleinman update}
\\
\rightarrow
\text{D\"orfler/NVB refinement}
\rightarrow
\text{feedback transfer}
\rightarrow
\text{two-grid check}
\end{gathered}
}
\]
\subsection{Numerical results}
\label{subsec:num-results}

The adaptive computation is initialized by the collocated boundary-velocity
feedback $K_0$ with gain $\kappa=1$. Starting from the first production
mesh $T_3$, exactly one Newton--Kleinman update is performed on every
production mesh. Table~\ref{tab:num-nk} summarizes the resulting sequence.
The reported ADI rank is the rank after compression, and
$\rho_\beta$ is the final resolvent-weighted ADI residual ratio.

\begin{table}[t]
\centering
\caption{Adaptive Newton--Kleinman iteration for the elasticity problem.
The Newton indicator is not defined on the first production level.}
\label{tab:num-nk}
\scriptsize
\begin{tabular}{@{}c|r|r|c|r|c|c|c@{}}
\hline
mesh
& triangles
& $\dim X_h$
& update
& rank
& $\rho_\beta$
& $\eta_{\rm N}$
& $\eta_{\rm grid}$
\\
\hline
$T_3$
& 1200
& 2480
& $K_0\to K_1$
& 510
& $9.9982\cdot10^{-4}$
& --
& $9.8634\cdot10^{-2}$
\\
$T_4$
& 1561
& 3272
& $K_1\to K_2$
& 287
& $4.7901\cdot10^{-4}$
& $5.0509\cdot10^{-1}$
& $2.6798\cdot10^{-2}$
\\
$T_5$
& 2140
& 4480
& $K_2\to K_3$
& 187
& $2.3522\cdot10^{-4}$
& $1.8511\cdot10^{-1}$
& $1.2974\cdot10^{-2}$
\\
$T_6$
& 2881
& 5964
& $K_3\to K_4$
& 226
& $1.1830\cdot10^{-4}$
& $2.4681\cdot10^{-2}$
& $7.0376\cdot10^{-3}$
\\
$T_7$
& 3806
& 7804
& $K_4\to K_5$
& 306
& $6.1200\cdot10^{-5}$
& $4.3541\cdot10^{-3}$
& $6.6817\cdot10^{-3}$
\\
$T_8$
& 4827
& 9844
& $K_5\to K_6$
& 410
& $3.0227\cdot10^{-5}$
& $4.6425\cdot10^{-3}$
& $3.2048\cdot10^{-3}$
\\
\hline
\end{tabular}
\end{table}

The two stopping mechanisms are visibly distinct. On $T_7$, the Newton
criterion is already satisfied,
\[
\eta_{\rm N}
=
4.3541\cdot10^{-3}
<
5\cdot10^{-3},
\]
whereas the post-refinement spatial check still gives
\[
\eta_{\rm grid}
=
6.6817\cdot10^{-3}
>
5\cdot10^{-3}.
\]
The computation therefore does not terminate at this level. The feedback
$K_5$ is transferred to $T_8$, where one further direct feedback-cost
evaluation and one Newton--Kleinman update produce $K_6$.

As required by the outer algorithm, $T_8$ is then refined once more before
the stopping decision is made. The resulting child $T_9$ contains $6538$
triangles and has state dimension $13480$. No Newton--Kleinman step and no
ADI iteration are performed on this child. Instead, the feedback $K_6$ is
transferred from $T_8$ to $T_9$ and used only for the same-feedback
two-grid validation. This gives
\[
\eta_{\rm N}
=
4.642469575470\cdot10^{-3},
\qquad
\eta_{\rm grid}
=
3.204769605900\cdot10^{-3},
\]
so both conditions in \eqref{eq:num-outer-stop} are satisfied. Hence the
accepted production result is
$T_8/K_6$,
while $T_9$ is a validation-only mesh.

The complete adaptive Newton--Kleinman computation required
$727.6\,\mathrm{s}$ of wall-clock time. The final accepted discretization
contains $4827$ triangles and has first-order state dimension $9844$.
Throughout the computation the feedback remains in implicit two-port form;
no dense closed-loop generator or dense feedback matrix is assembled.

\paragraph{Visualization.}
A visualization comparing the open-loop and closed-loop deformation dynamics
for the final feedback is provided in the supplementary material. Starting
from the same initial deformation, it illustrates the decay of the elastic
oscillations under feedback control.

\section{Conclusion}

We have developed a Newton--Kleinman iteration for infinite-dimensional
linear-quadratic control problems in the system-node framework. The
formulation is based on a Lur'e identity for the value operator and provides
a well-defined Newton step without relying on the operator products appearing
in the classical algebraic Riccati equation. Under finite-cost detectability
and finite-dimensional input spaces, the resulting iteration converges
strongly to the optimal value operator.

The numerical realization is based on low-rank ADI iterations driven by shifted system-node resolvent solves. In the cylinder-wake example, this realization is combined with adaptive Taylor--Hood discretization and exactly one Newton--Kleinman update on each spatial level. The elasticity example demonstrates the same operator-level construction for a second-order mechanical system with boundary traction control and boundary displacement observation. In both examples, the computation follows the continuous system-node formulation through shifted primal and adjoint problems and finite-dimensional operations on the input space, without first forming a discrete algebraic Riccati equation or explicitly assembling the closed-loop generator.

The adaptive strategy used in this work is deliberately simple: on each
spatial level, one Newton--Kleinman update is performed, followed by a spatial
refinement step, while the low-rank Gramian computations are resolved on the
current fixed mesh. This provides a first numerical framework for combining
Riccati iterations with adaptive discretization. It should not be viewed as a
unique adaptive paradigm. Rather, it opens a broader numerical perspective in
which different couplings between Newton convergence, low-rank approximation
accuracy, and spatial discretization errors can be investigated. More flexible adaptive strategies could allow the spatial mesh to evolve already during the Newton and low-rank ADI iterations. In particular, spatial adaptation could already be performed within an ADI iteration, with local refinement where the current low-rank factors reveal insufficient spatial resolution and coarsening where the evolving approximation shows little spatial activity. Developing reliable criteria for such genuinely coupled Newton--ADI--mesh adaptation is an interesting direction for further work.

\section*{Statements and Declarations}

\bmhead{Funding}

No funding was received for conducting this study.

\bmhead{Competing interests}

The author has no relevant financial or non-financial interests to disclose.

\bmhead{Data availability}

All numerical data supporting the findings of this study are generated by
the reproduction code provided as supplementary material.

\bmhead{Code availability}

The source code used to generate the numerical results reported in this
article, together with the reproduction scripts and documentation, is
provided as supplementary material.

\bmhead{Author contributions}

Timo Reis conceived the study, developed the mathematical analysis and
numerical methodology, implemented and validated the computations, and wrote
the manuscript.

\bmhead{Use of generative AI}

OpenAI ChatGPT was used to assist with the development and debugging of parts
of the computational code and, on a limited basis, for language polishing.
All AI-assisted code was reviewed, tested, and validated by the author. The
author takes full responsibility for the content of the manuscript.

\bibliography{references}

\end{document}